\documentclass[smallextended]{svjour3}       

\usepackage{latexsym, amssymb, enumerate, amsmath}
\usepackage{defs}
\usepackage{fonts}
\usepackage{dsfont}
\usepackage{epsfig}
\usepackage{epstopdf}
\usepackage{caption}
\usepackage{booktabs}
\newcommand{\thickbreve}[1]{\mathbf{\breve{\text{$#1$}}}}
\usepackage{textcomp}

\newtheorem{assumption}{Assumption}
\newtheorem{condition}{Condition}

\usepackage{etoolbox}
\newcommand{\qedsym}{\null\hfill\(\Box\)}
\AtEndEnvironment{proof}{\qedsym}

\usepackage{lineno}

\usepackage{url}
\usepackage{cite}
\usepackage{multirow} 
\usepackage{afterpage}
\usepackage{enumitem}
\usepackage{mathrsfs}
\usepackage{subfig} 

\usepackage{algorithmicx}
\usepackage{xifthen}
\usepackage{needspace}
\usepackage{algorithm}
\usepackage{algpseudocode}

\usepackage{todonotes}
\graphicspath{{./figures_opt/}}

\newcounter{myalg}
\newenvironment{myalg}[1][]%
  {
    \needspace{2\baselineskip}
    \noindent \rule{\linewidth}{1pt} \endgraf
    \refstepcounter{myalg}
    \centering \textsc{Algorithm}~\themyalg%
    \ifthenelse{\isempty{#1}}{}{:\ #1}
  }{
  \noindent \rule{\linewidth}{1pt}
  }%

\interfootnotelinepenalty=10000

\title{A Constraint-Reduced MPC Algorithm for Convex Quadratic
Programming, with a Modified Active Set Identification Scheme%
\thanks
	{This manuscript has been authored, in part, by UT-Battelle, LLC, under Contract No. DE-AC0500OR22725 with the U.S. Department of Energy. The United States Government retains and the publisher, by accepting the article for publication, acknowledges that the United States Government retains a non-exclusive, paid-up, irrevocable, world-wide license to publish or reproduce the published form of this manuscript, or allow others to do so, for the United States Government purposes. The Department of Energy will provide public access to these results of federally sponsored research in accordance with the DOE Public Access Plan (\texttt{http://energy.gov/downloads/doe-public-access-plan}).}}
\date{\today}
\author{M. Paul Laiu%
\and
Andr{\'e} L.~Tits%
}

\institute{M. Paul Laiu \at
 Computational and Applied Mathematics Group,
				Computer Science and Mathematics Division,
				Oak Ridge National Laboratory,
				Oak Ridge, TN 37831 USA\\
              \email{laiump@ornl.gov} \\
              This author's research was sponsored by the Office of Advanced Scientific
Computing Research and performed at the Oak Ridge National Laboratory,
which is managed by UT-Battelle, LLC under Contract No. DE-AC05-00OR22725.
           \and
           Andr{\'e} L.~Tits \at
           Department of Electrical and Computer Engineering
  \& Institute for Systems Research,
  University of Maryland
  College Park, MD 20742 USA,\\
				\email{andre@umd.edu}
}

\titlerunning{A Constraint-Reduced MPC Algorithm for Convex Quadratic Programming}        

\begin{document}

\maketitle

\begin{abstract}
A constraint-reduced Mehrotra--Predictor--Corrector 
algorithm for convex quadratic programming
is proposed.
(At each iteration, such algorithms use only a subset of the inequality constraints in constructing 
the search direction, resulting in CPU savings.)  The proposed algorithm
makes use of a regularization scheme to cater to cases where the
reduced constraint matrix is rank deficient.  Global and local
convergence properties are established under arbitrary working-set
selection rules subject to satisfaction of a general condition.
A modified active-set identification scheme that fulfills this
condition is introduced.  Numerical tests show great promise 
for the proposed algorithm, in particular for its active-set 
identification scheme.  
While the focus of the present paper
is on dense systems, application of the main ideas to large sparse
systems is briefly discussed.
\end{abstract}


\section{Introduction}
\label{chap:CR-MPC}

Consider a strictly feasible convex quadratic program (CQP) in 
standard inequality form,%
\footnote{
See end of Sections~\ref{subsec:infeasible} and~\ref{subsec:conv_ana} 
below for a brief discussion of how linear
{\em equality} constraints can be incorporated.
}
\begin{equation}
\minimize_{\bx\in\bbR^n} \: f(\bx):=\frac{1}{2}\bx^TH\bx+\bc^T\bx
\quad \mbox{subject to} \:     A\bx\geq \bb\:,
\tag{P}
\label{eq:primal_cqp}
\end{equation}
where $\bx\in\bbR^n$ is the vector of optimization 
variables, $f\colon\bbR^n\to\bbR$ 
is the objective function with $\bc\in\bbR^n$ and 
$H\in\bbR^{n\times n}$ a symmetric positive semi-definite matrix,
$A\in\bbR^{m\times n}$ and $\bb\in\bbR^m$, with $m>0$, 
define the $m$ linear 
inequality constraints, and (here and elsewhere) all inequalities
($\geq$ or $\leq$) are meant component-wise.  $H$, $A$, and $\bc$ 
are not all zero.
The dual problem associated to \eqref{eq:primal_cqp} is
\begin{equation}
\maximize_{\bx\in\bbR^n,\,\bslambda\in\bbR^m}\:  -\frac{1}{2}\bx^TH\bx+\bb^T\bslambda
\quad \mbox{subject to}\:    H\bx+\bc-A^T\bslambda=\bzero, 
\quad \bslambda\geq\bzero\:,
\tag{D}
\label{eq:dual_cqp}
\end{equation}
where $\bslambda\in\bbR^m$ is the vector of 
multipliers.
Since the objective function $f$ is convex and the constraints are linear, solving \eqref{eq:primal_cqp}--\eqref{eq:dual_cqp} is equivalent to solving the Karush-Kuhn-Tucker (KKT) system
\begin{equation}
H\bx-A^T\bslambda+\bc=\bzero, \quad A\bx-\bb-\bs=\bzero,
\quad S\bslambda=\bzero, \quad \bs,\bslambda\geq\bzero\:,
\label{eq:KKT}
\end{equation}
where $\bs\in\bbR^n$ is a vector of slack variables 
associated to the inequality 
constraints in~\eqref{eq:primal_cqp}, 
and $S=\diag(\bs)$.
%

Primal--dual interior--point methods (PDIPM)
solve \eqref{eq:primal_cqp}--\eqref{eq:dual_cqp} by iteratively applying a     
Newton-like iteration to the three {\em equations} 
in~\eqref{eq:KKT}.
Especially popular for its numerical behavior is
S.~Mehrotra's predictor--corrector (MPC) variant, 
which was introduced in~\cite{Mehrotra-1992} for 
the case of linear optimization (i.e., $H=\bzero$)
with straightforward extension to CQP 
(e.g.,~\cite[Section 16.6]{Nocedal-Wright-2006}).  (Extension
to linear complementarity problems was studied
in~\cite{Zhang1995}.)

A number of authors have paid special attention to ``imbalanced''
problems, in which the number of active constraints 
at the solution is small compared to the total number of
constraints (in particular, cases in which $m\gg n$).
In solving such problems, while most constraints are
in a sense redundant, traditional IPMs devote much effort per
iteration to solving large systems of linear equations
that involve all $m$ constraints.  In the 1990s,
work toward reducing the computational
burden by using only a small portion (``working set'')
of the
constraints in the search direction computation focused
mainly on {\em linear} optimization~\cite{Ye1990,
DantzigYe91,Tone1993,DenHer:94}.  This was also the case
for~\cite{TAW-06},
which may have been the
first to consider such ``constraint-reduction'' schemes in 
the context of PDIPMs (vs.~purely dual interior--point methods), 
and for its extensions~\cite{HT12,WNTO-2012,WTA-2014}.  
Exceptions include the works of 
Jung {\em et al.}~\cite{JOT-08,JOT-12} and of 
Park {\em et al.}~\cite{Park-OLeary-2015,Park2016}; in the
former, an extension to CQP was considered, with an
affine-scaling variant used as the ``base'' algorithm; in
the latter, a constraint-reduced PDIPM for semi-definite
optimization (which includes CQP as a special case) was
proposed, for which polynomial complexity was proved.
Another exception is the ``QP-free'' algorithm for 
inequality-constrained (smooth) nonlinear 
programming of Chen {\em et al.}~\cite{CWH06}.
There, a constraint-reduction approach is used where
working sets are selected by means of the 
Facchinei--Fischer--Kanzow active set identification
technique~\cite{FFK98}.

In the linear-optimization case, the size of the working
set is usually kept above (or no lower than) the number
$n$ of variables in \eqref{eq:primal_cqp}.  It is indeed known
that, in that case, if the set of solutions to \eqref{eq:primal_cqp} is nonempty and bounded,
then a solution exists at
which at least $n$ constraints are active.  Further if fewer
than $n$ constraints are included in the linear system that
defines the search direction, then the default (Newton-KKT)
system matrix is structurally singular.

When $H\not=\bzero$ though, the number of active constraints at
solutions 
may be much less 
than $n$ and, at strictly feasible iterates, the Newton-KKT
matrix is non-singular whenever the subspace spanned by
the columns of $H$ and the working-set columns of $A^T$
has (full) dimension~$n$.  (In particular,
of course, if $H$ is non-singular (i.e., is positive definite)
the Newton-KKT matrix is non-singular even when the working
set is empty---in which case the unconstrained
Newton direction is obtained.)  Hence, when solving a CQP,
forcing the working set to have size at least $n$ is
usually wasteful.  

The present paper proposes a constraint-reduced MPC
algorithm for CQP.
The work
borrows from~\cite{JOT-12} (affine-scaling, convex
quadratic programming)
and is significantly inspired from~\cite{WNTO-2012} (MPC, linear
optimization), but
improves on both in a number of ways---even for the case
of linear optimization (i.e., when $H=\bzero$).  
Specifically,
\begin{itemize}
\item in contrast with~\cite{JOT-12,WNTO-2012} (and~\cite{CWH06}), 
it does not involve a (CPU-expensive) rank estimation combined 
with an increase of the size of the working set when a rank 
condition fails; 
rather, it makes use of a regularization scheme adapted 
from~\cite{WTA-2014};
\item a general condition (Condition~\ref{cond:working_set_GC}) to be satisfied by the constraint-selection rule is proposed
which, when satisfied, guarantees global and local quadratic convergence of the overall algorithm (under appropriate assumptions);
\item a specific constraint-selection rule is introduced which,
like in~\cite{CWH06} (but unlike in~\cite{JOT-12}), does not 
impose any {\sl a priori} lower bound on the size of 
the working set; this rule involves 
a modified active set identification  
scheme that builds on results from~\cite{FFK98};
numerical comparison shows that the new rule outperforms previously
used rules.
\end{itemize}
Other improvements over~\cite{JOT-12,WNTO-2012}
include (i)
a modified stopping criterion and a proof of termination of
the algorithm (in~\cite{JOT-12,WNTO-2012}, 
termination is only guaranteed under 
uniqueness of primal-dual solution and strict complementarity), 
(ii) a potentially larger value of the
``mixing parameter'' in the definition of the primal search
direction (see footnote~\ref{foonote:gamma}(iii)),
and
(iii) 
an improved update formula (compared to that used in~\cite{WTA-2014})
for the regularization parameter, which fosters a smooth evolution of 
the regularized Hessian $W$ from an initial (matrix) value $H+R$,
where $R$ is
specified by the user,
at a rate no faster than that required for local q-quadratic convergence.

In Section~\ref{sec:CR_MPC} below, we introduce the proposed algorithm
(Algorithm~\ref{algo:CR-MPC})
and a general condition (Condition~\ref{cond:working_set_GC}) to 
be satisfied by the constraint-selection rule, 
and we state global and local quadratic convergence results for 
Algorithm~\ref{algo:CR-MPC}, subject to Condition~\ref{cond:working_set_GC}.
We conclude the section by proposing a specific rule
({\rulename}), and proving that it
satisfies Condition~\ref{cond:working_set_GC}. 
Numerical results are reported in Section~\ref{sec:opt_num_results}.
While the focus of the present paper is on dense systems, 
application of the main ideas to large sparse systems is briefly discussed
in the Conclusion (Section~\ref{sec:conclusion}), which also includes other
concluding remarks.
Convergence proofs are given in two appendices.  

The following notation is used throughout the paper.  To the number $m$
of inequality constraints, we associate the index set 
$\bm:= \{1,2,\ldots,m\}$.
The primal feasible and primal strictly feasible sets are
\[
\cF_P:=\{\bx\in\bbR^n:A\bx\geq\bb\} 
\quand \cF_P^o:=\{\bx\in\bbR^n:A\bx>\bb\},
\]
and the primal and primal-dual solution sets are
\[
\cF_P^*:=
\{\bx\in\cF_P:f(\bx)\leq f(\tilde\bx),\,\forall \tilde\bx\in\cF_P\}
\quand \cF^*:=\{(\bx,\bslambda): \eqref{eq:KKT} {\rm~holds} \}\:.
\]
Of course, $\bx^*\in\cF_P^*$ if and only if, for 
some $\bslambda^*\in\bbR^m$,
$(\bx^*,\bslambda^*)\in\cF^*$  
Also,
we term {\em stationary} a point $\hat\bx\in\cF_P$ for which
there exists $\hat\bslambda\in\bbR^m$ such that $(\hat\bx,\hat\bslambda)$
satisfies~\eqref{eq:KKT} except possibly for non-negativity of
the components of $\hat\bslambda$.
Next, for $\bx\in\cF_P$, the (primal) {\em active-constraint} set 
at $\bx$ is 
\[
\cA(\bx):=\{i\in\bm:\ba_i^T\bx=b_i\}\:,
\]
where $\ba_i$ is the transpose of the $i$-th row of $A$.
Given a subset $Q\subseteq\bm$, $\Qc$ indicates its complement 
in $\bm$ and
$|Q|$ its cardinality; for a vector $\bv\in\bbR^m$, $\bv_Q$
is a sub-vector consisting of those entries with index in $Q$,
and for an $m\times n$ matrix $L$, $L_Q$ is a $|Q|\times n$
sub-matrix of $L$ 
consisting of those rows with index in $Q$.  An exception to this
rule, which should not create any confusion, is that for an 
$m\times m$ {\em diagonal} matrix $V=\diag(\bv)$, $V_Q$ is $\diag(\bv_Q)$,
a $|Q|\times|Q|$ diagonal sub-matrix of $V$.
For symmetric matrices $W$ and $H$, $W\succeq H$ (resp.~$W\succ H$)
means that $W-H$ is positive semi-definite (resp.~positive definite).
Finally, given a vector $\bv$, $[\bv]_+$ and $[\bv]_-$ denote the
positive and negative parts of $\bv$, i.e., vectors with components 
respectively given by $\max\{v_i,0\}$ and $\min\{v_i,0\}$,
$\bone$ is a vector of all ones, and given a Euclidean space $\bbR^p$,
the ball centered at $\bv^*\in\bbR^p$ with radius $\rho>0$ 
is denoted by
$B(\bv^*,\rho):=\{\bv\in \bbR^p:\|\bv-\bv^*\|\leq\rho\}$.

\section{A Regularized, Constraint-Reduced MPC Iteration}
\label{sec:CR_MPC}

\subsection{A Modified MPC Algorithm}
\label{subsec:MPC}

In~\cite{WNTO-2012}, a constraint-reduced MPC algorithm was
proposed for {\em linear} optimization problems, as a constraint-reduced
extension of a globally and locally superlinearly convergent variant 
of Mehrotra's original algorithm~\cite{Mehrotra-1992, Wright-1997}.
Transposed to the CQP context, that variant proceeds as follows.

In a first step (following the basic MPC paradigm),
given ($\bx$, $\bslambda$, $\bs$) with $\bslambda>\bzero$, $\bs>\bzero$, 
it computes the primal--dual \emph{affine-scaling} direction 
$(\Delta \bx^{\aff},\, \Delta\bslambda^{\aff},\, \Delta \bs^{\aff})$ 
at $(\bx,\bslambda,\bs)$, viz., the Newton direction for the solution 
of the equations portion of~\eqref{eq:KKT}.  Thus, it solves
\begin{equation}
J(H,A,\bs,\bslambda)
  \left[\begin{array}{c}   \Delta \bx^{\aff}\\ \Delta\bslambda^{\aff}\\  \Delta \bs^{\aff} \end{array}\right]=
  \left[\begin{array}{c}   -\nabla f(\bx)+A^T\bslambda \\ \bzero \\  -S\bslambda \end{array}\right]\:,
  \label{eq:KKT_LS_P}
\end{equation}
where, given a symmetric matrix $W\succeq\bzero$, we define
\begin{equation*}
J(W,A,\bs,\bslambda):=\left[\begin{array}{ccc}    W & -A^T & \bzero\\    A & \bzero   & -I\\    \bzero & S & \Lambda  \end{array}\right],
\end{equation*}
with $\Lambda=\diag(\bslambda)$.
Conditions for unique solvability of system~\eqref{eq:KKT_LS_P}
are given in the following standard result
(invoked in its full form later in this paper);
see, e.g.,~\cite[Lemma B.1]{JungThesis}.

\begin{lemma}
Suppose $s_i,\lambda_i\geq0$ for all $i$ and $W\succeq\bzero$.
Then $J(W,A,\bs,\bslambda)$ is invertible if and only if
the following three conditions hold:
\begin{enumerate}[label=(\roman*)]
\item $s_i+\lambda_i>0$ for all $i$;
\item $A_{\{i:s_i=0\}}$ has full row rank; and
\item $\left[W\,\,\left(A_{\{i:\lambda_i\neq0\}}\right)^T\right]$ has full row rank.
\end{enumerate}
\label{lemma:nonsingular_J}
\end{lemma}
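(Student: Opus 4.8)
The plan is to analyze the block structure of $J(W,A,\bs,\bslambda)$ directly, eliminating the third block row and column to reduce to a standard saddle-point condition. First I would observe that the third block equation reads $S\,\Delta\bx'$-style relation—more precisely, partitioning the index set $\bm$ according to whether $s_i=0$ or $s_i>0$, the row $S\Delta\bslambda + \Lambda\Delta\bs = \bzero$ (in the homogeneous system $Jv=0$) splits: for indices with $s_i>0$ we can solve $\Delta s_i = -(s_i/\lambda_i)\cdots$ only if $\lambda_i$ is controlled, so instead I would treat the two diagonal blocks $S$ and $\Lambda$ together. The clean way: from $A\Delta\bx - \Delta\bs = \bzero$ we get $\Delta\bs = A\Delta\bx$, and substituting into $S\Delta\bslambda + \Lambda\Delta\bs = \bzero$ gives $S\Delta\bslambda + \Lambda A\Delta\bx = \bzero$. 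Combined with $W\Delta\bx - A^T\Delta\bslambda = \bzero$, invertibility of $J$ is equivalent to this reduced $(\Delta\bx,\Delta\bslambda)$ system having only the trivial solution.

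Next I would split $\Delta\bslambda$ and the reduced equations by the sets $Z:=\{i:s_i=0\}$ and its complement. On $\Zc$ (where $s_i>0$), the equation $s_i\Delta\lambda_i + \lambda_i(\ba_i^T\Delta\bx)=0$ lets me eliminate $\Delta\lambda_i = -(\lambda_i/s_i)\ba_i^T\Delta\bx$; on $Z$ (where $s_i=0$), the same equation forces $\lambda_i\,\ba_i^T\Delta\bx = 0$, which under condition (i) (so $\lambda_i>0$ on $Z$) is exactly $A_Z\Delta\bx = \bzero$. Substituting the eliminated multipliers into $W\Delta\bx - A^T\Delta\bslambda=\bzero$ yields
\[
\Bigl(W + \sum_{i\in\Zc}\tfrac{\lambda_i}{s_i}\ba_i\ba_i^T\Bigr)\Delta\bx - A_Z^T\Delta\bslambda_Z = \bzero,
\qquad A_Z\Delta\bx = \bzero.
\]
This is a textbook KKT/saddle-point matrix with a positive semidefinite $(1,1)$ block $W_{\eff}:=W + A_{\Zc}^T D A_{\Zc}$ (where $D = \diag(\lambda_i/s_i)_{i\in\Zc}\succ\bzero$) and constraint block $A_Z$.

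I would then invoke (or quickly re-derive) the standard fact that such a saddle-point matrix is nonsingular iff $A_Z$ has full row rank and $W_{\eff}$ is positive definite on $\ker A_Z$. Full row rank of $A_Z = A_{\{i:s_i=0\}}$ is precisely condition (ii). For the kernel condition: take $\Delta\bx\in\ker A_Z$ with $\Delta\bx^T W_{\eff}\Delta\bx = 0$; since both $W\succeq\bzero$ and each rank-one term are PSD, this forces $W\Delta\bx=\bzero$ and $\ba_i^T\Delta\bx=0$ for all $i\in\Zc$ with $\lambda_i\neq0$. Together with $A_Z\Delta\bx=\bzero$ (and $\lambda_i>0$ on $Z$ by (i)), this says $\Delta\bx$ is orthogonal to the rows of $W$ and to every $\ba_i$ with $\lambda_i\neq0$, i.e., $\Delta\bx\in\ker\bigl[W\ \ (A_{\{i:\lambda_i\neq0\}})^T\bigr]^T$; condition (iii) makes this kernel trivial, so $\Delta\bx=\bzero$, and then back-substitution gives $\Delta\bslambda=\bzero$, $\Delta\bs=\bzero$. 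Conversely, if any of (i)–(iii) fails one exhibits a nontrivial null vector: failure of (i) (some $s_i=\lambda_i=0$) makes row/column $i$ of the third block vanish; failure of (ii) gives $\Delta\bslambda_Z\neq\bzero$ in $\ker A_Z^T$; failure of (iii) gives $\Delta\bx\neq0$ killing $W$ and all active $\ba_i$. I expect the main obstacle to be bookkeeping the case $s_i>0,\ \lambda_i=0$ correctly—there the elimination of $\Delta\lambda_i$ still works and contributes a zero term to $W_{\eff}$, so it neither helps nor hurts, but one must be careful that such indices are correctly excluded from the "$\lambda_i\neq0$" row-rank condition in (iii); everything else is routine linear algebra.
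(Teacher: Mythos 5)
Your proof is correct, and there is nothing in the paper to match it against: the paper states this lemma as a standard result and simply cites Lemma~B.1 of Jung's thesis, giving no proof of its own. Your route is sound and self-contained: eliminating $\Delta\bs=A\Delta\bx$ and then the multipliers with $s_i>0$ reduces the null-space question (under condition (i), which converts the rows with $s_i=0$ into $A_Z\Delta\bx=\bzero$, $Z:=\{i:s_i=0\}$) to a saddle-point system with positive semidefinite $(1,1)$ block $W+\sum_{i\notin Z}(\lambda_i/s_i)\ba_i\ba_i^T$ and constraint block $A_Z$; the standard nonsingularity criterion for such systems, together with the observation that vanishing of the quadratic form forces $W\Delta\bx=\bzero$ and $\ba_i^T\Delta\bx=0$ for every $i$ with $\lambda_i\neq0$ (indices in $Z$ being covered by $A_Z\Delta\bx=\bzero$ since $\lambda_i>0$ there), delivers sufficiency of (i)--(iii), while your three explicit null vectors (a zero third-block row when $s_i=\lambda_i=0$; $\Delta\bslambda_Z\in\ker A_Z^T$ when (ii) fails; $\Delta\bx$ annihilating $W$ and all $\ba_i$ with $\lambda_i\neq0$, with $\Delta\bs=A\Delta\bx$, $\Delta\bslambda=\bzero$, when (iii) fails) give necessity. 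The one bookkeeping point you flagged---indices with $s_i>0$, $\lambda_i=0$ contribute a zero term to the reduced Hessian and are rightly excluded from condition (iii)---is handled correctly, so the argument is complete.
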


\noindent
In particular, with $\bslambda>\bzero$ and $\bs>\bzero$,
$J(H,A,\bs,\bslambda)$ is invertible if and only if
$[H\,\,A^T]$ has full row rank.
By means of 
two steps of block Gaussian elimination, system~\eqref{eq:KKT_LS_P}
reduces to the {\em normal} system
\begin{equation}
\begin{alignedat}{2}
  M\Delta \bx^{\aff}&=-\nabla f(\bx),\\
  \Delta \bs^{\aff}&=A \Delta \bx^{\aff},\\
  \Delta\bslambda^{\aff}&=-\bslambda-S^{-1}\Lambda\Delta \bs^{\aff}\:,
\end{alignedat}
\label{eq:normal_sys}
\end{equation}
where $M$ is given by
\begin{equation}
  M:=H+A^TS^{-1}\Lambda A=H+\sum_{i=1}^m \frac{\lambda_i}{s_i}\ba_i\ba_i^T\:.
  \label{eq:normal_mat}
\end{equation}
Given positive definite $S$ and $\Lambda$,
$M$ is 
invertible whenever $J(H,A,\bs,\bslambda)$ is.

In a second step, MPC algorithms construct
a \emph{centering/corrector} direction, which in the CQP case 
(e.g.,~\cite[Section~16.6]{Nocedal-Wright-2006}) is the solution
$(\Delta \bx^{\cor},\, \Delta\bslambda^{\cor},\, \Delta \bs^{\cor})$
to (same coefficient matrix as in~\eqref{eq:KKT_LS_P})
\begin{equation}
J(H,A,\bs,\bslambda)
  \left[\begin{array}{c}   \Delta \bx^{\cor}\\ \Delta\bslambda^{\cor}\\  \Delta \bs^{\cor} \end{array}\right]=
  \left[\begin{array}{c}  \bzero\\ \bzero\\  \sigma\mu\bone-\Delta S^{\aff}\Delta\bslambda^{\aff} \end{array}\right]\:,
  \label{eq:KKT_LS_C}
\end{equation}
where 
$\mu:=\bs^T\bslambda/m$ is the ``duality measure''
and $\sigma := (1-\alpha^{\aff})^3$ is the centering parameter, with
\begin{equation*}
  \alpha^{\aff} := \text{argmax}\{\alpha\in[0,\,1]\,|\,\bs+\alpha\Delta\bs^{\aff}\geq \bzero,\,\,\bslambda+\alpha\Delta\bslambda^{\aff}\geq \bzero \}\:.
\end{equation*}
While most MPC algorithms use as search direction the sum of the 
affine-scaling and centering/corrector directions, to force global
convergence, we borrow from~\cite{WNTO-2012}%
\footnote{\label{foonote:gamma}We however do not
fully follow~\cite{WNTO-2012}: (i) Equation~\eqref{eq:mixing_para_1} 
generalizes (22) of~\cite{WNTO-2012} to CQP; (ii) In~\eqref{eq:update_lambda}
we explicitly
bound $\bslambda^+$ ($x^+$ in~\cite{WNTO-2012}), by $\lambda^{\max}$;
in the linear case, such boundedness is guaranteed (Lemma~3.3
in~\cite{WNTO-2012}); as a side-effect, in~\eqref{eq:mixing_para},
we could drop the penultimate
term in~(24) of~\cite{WNTO-2012} (invoked in proving convergence
of the $x$ sequence in the proof of Lemma~3.4
of~\cite{WNTO-2012}); (iii)
We do not restrict the primal step size as done 
in (25)
of~\cite{WNTO-2012}
(dual step size in the context of~\cite{WNTO-2012}), at the expense
of a slightly more involved convergence proof: see our 
Proposition~\ref{prop:dx_to_zero} below, to be compared 
to~\cite[Lemma~3.7]{WNTO-2012}.
}
and define
\begin{equation}
(\Delta \bx,\, \Delta\bslambda,\, \Delta \bs)=
(\Delta \bx^{\aff},\, \Delta\bslambda^{\aff},\, \Delta \bs^{\aff})
+\gamma(\Delta \bx^{\cor},\, \Delta\bslambda^{\cor},\, \Delta \bs^{\cor}),
\label{eqn:overall direction}
\end{equation}
where the ``mixing'' parameter $\gamma\in[0,\,1]$ is one 
when $\Delta\bx^{\cor}=\bzero$ and otherwise
\begin{equation}
\gamma := \min\left\{\gamma_1,\, 
\tau \frac{\|\Delta\bx^{\aff}\|}{\|\Delta\bx^{\cor}\|},\,
\tau \frac{\|\Delta\bx^{\aff}\|}{\sigma\mu}
\right\},
\label{eq:mixing_para}
\end{equation}
where $\tau\in[0,1)$ and
\begin{equation}
\gamma_1 := \text{argmax}\left\{\tilde\gamma\in[0,\,1]~|~f(\bx)-f(\bx+\Delta\bx^{\aff}+\tilde{\gamma}\Delta\bx^{\cor})\geq
\omega(f(\bx)-f(\bx+\Delta\bx^{\aff}))\right\},
\label{eq:mixing_para_1}
\end{equation}
with $\omega\in(0,\,1)$.
The first term in~\eqref{eq:mixing_para} guarantees that the search 
direction is a direction of significant descent for the objective
function (which in our context is central to forcing global convergence)
while the other two terms ensures that 
the magnitude of the centering/corrector direction is not too 
large compared to the magnitude of the affine-scaling direction.

As for the line search, we again borrow from~\cite{WNTO-2012}, where
specific safeguards are imposed to guarantee global and local q-quadratic
convergence.  We set
  \begin{equation*}
  \begin{alignedat}{2}
  \bar{\alpha}_\prim &:= \text{argmax}\{\alpha:\bs+\alpha\Delta\bs\geq \bzero\},\quad
  &&\alpha_\prim := \min\{1,\,\max\{\varkappa\bar{\alpha}_\prim,\,\bar{\alpha}_\prim-\|\Delta\bx\|\}\},\\
  \bar{\alpha}_\dual &:= \text{argmax}\{\alpha:\bslambda+\alpha\Dbl\geq \bzero\},\quad
   &&\alpha_\dual := \min\{1,\,\max\{\varkappa\bar{\alpha}_\dual,\,\bar{\alpha}_\dual-\|\Delta\bx\|\}\}\:,
  \end{alignedat}
  \end{equation*}
with $\varkappa\in(0,1)$, then
\begin{equation*}
  (\bx^{+}, \bs^{+}):=
  (\bx,  \bs) +
 \alpha_\prim (\Dbx, \Dbs)\:.
  \end{equation*}
and finally 
  \begin{equation}
\lambda^{+}_i := \min\{\lambda^{\text{max}},\,\max\{{\lambda}_i + \alpha_\dual\Delta\lambda_i,\,\min\{\underline{\lambda},\,\chi\}\}\},\,i=1,\ldots,m\:,
  \label{eq:update_lambda}
 \end{equation}
where $\lambda^{\text{max}}>0$ and $\underline{\lambda}\in(0,\lambda^{\text{max}})$ are algorithm parameters, and
\begin{equation*}
\chi := \|\Delta\bx^{\aff}\|^\nu+\|[{\bslambda}+\Delta\bslambda^{\aff}]_-\|^\nu\:,
\end{equation*}
with $\nu\geq2$.

We verified via numerical tests that for the problems considered in Section~\ref{sec:opt_num_results}, the modified MPC algorithm outlined in this section is at least as efficient as the MPC algorithm for CQPs given in \cite[Algorithm 16.4]{Nocedal-Wright-2006}.

\subsection{A Regularized Constraint-Reduced MPC Algorithm}
\label{subsec:CR_MPC}

In the modified MPC algorithm described in Section \ref{subsec:MPC}, 
the main computational cost is incurred in forming the normal 
matrix $M$ (see~\eqref{eq:normal_mat}), which requires approximately 
$mn^2/2$ multiplications (at each iteration) if $A$ is dense, 
regardless of how many of the $m$ inequality constraints 
in~\eqref{eq:primal_cqp} are active 
at the solution.  This may be wasteful when 
few of these constraints are active at the solution,
in particular (generically) when $m\gg n$ (imbalanced problems).
The constraint-reduction mechanism introduced in~\cite{TAW-06}
and used in \cite{JOT-12} in the context
of an affine-scaling algorithm for the solution of CQPs modifies $M$ 
by limiting the sum in \eqref{eq:normal_mat} to a wisely selected 
small subset of terms,
indexed by an index set 
$Q\subseteq\bm$ referred to as the {\em working set}.

Given a working set $Q$, the constraint-reduction 
technique produces an \emph{approximate} affine-scaling direction by 
solving a ``reduced'' version of the Newton system \eqref{eq:KKT_LS_P}, 
viz.
\begin{equation}
J(H,A_Q,\bs_Q,\bslambda_Q)
  \left[\begin{array}{c}   \Delta \bx^{\aff}\\ \Delta\bslambda_Q^{\aff}\\  \Delta \bs_Q^{\aff} \end{array}\right]=
  \left[\begin{array}{c}   -\nabla f(\bx)+(A_Q)^T\bslambda_Q \\ \bzero\\  -S_Q\bslambda_Q \end{array}\right]\:.
  \label{eq:CR_KKT}
\end{equation}
Just like the full system, when $\bs_Q>\bzero$, the reduced 
system~\eqref{eq:CR_KKT} is equivalent to the reduced normal system
\begin{equation}
\begin{alignedat}{2}
  \tilde M_{(Q)}\Delta \bx^{\aff} &= -\nabla f(\bx)\:,\\
  \Delta \bs_Q^{\aff} &= A_Q \Delta \bx^{\aff}\:,\\
  \Delta\bslambda^{\aff}_Q&=-\bslambda_Q-S_Q^{-1}\Lambda_Q\Delta\bs_Q^{\aff}\:,
  \end{alignedat}
\end{equation}
where the ``reduced'' $\tilde M_{(Q)}$ (still of size $n\times n$) is given by
\begin{equation*}
  \tilde M_{(Q)}:=H+(A_Q)^TS_Q^{-1}\Lambda_Q A_Q=H+\sum_{i\in Q} \frac{\lambda_i}{s_i}\ba_i\ba_i^T\:.
\end{equation*}
When $A$ is dense, the cost of forming $\tilde M_{(Q)}$ is approximately
$qn^2/2$, 
where $q:=|Q|$, 
leading to significant savings when $q\ll m$.  

One difficulty that may arise, when substituting $A_Q$ for $A$ in
the Newton-KKT matrix, is that the resulting linear system might
no longer be uniquely solvable.  Indeed, even 
when $[H\,\, A^T]$ has full row rank,
$[H\,\,(A_Q)^T]$ may be rank-deficient, so the third condition 
in Lemma~\ref{lemma:nonsingular_J} would not hold.
A possible remedy is to regularize the linear system. In the 
context of linear optimization, such regularization was implemented 
in \cite{Gill-1994} and explored in \cite{Saunders-Tomlin-1996} by 
effectively adding a fixed scalar multiple of identity matrix into
the normal matrix to improve numerical stability of the Cholesky factorization.
A more general regularization was proposed in \cite{Altman-Gondzio-1999} where
diagonal matrices that are adjusted dynamically based on the pivot values
in the Cholesky factorization were used for regularization.
On the other hand, quadratic regularization was applied to obtain 
better preconditioners in~\cite{Castro-Cuesta-2011}, where 
a hybrid scheme of the Cholesky factorization and a preconditioned 
conjugate gradient method is used to solve linear systems arising
in primal block-angular problems.
In~\cite{Castro-Cuesta-2011}, 
the regularization dies out when optimality is approached.

Applying regularization to address rank-deficiency of the normal
matrix due to constraint reduction was first considered in~\cite{WTA-2014},
in the context of linear optimization.
There a similar regularization as in \cite{Gill-1994,Saunders-Tomlin-1996}
is applied, while the scheme lets the regularization die out 
as a solution to the optimization problem is approached, to 
preserve fast local convergence.
Adapting such approach to the present context,
we replace $J(H,A_Q,\bs_Q,\bslambda_Q)$ by $J(W,A_Q,\bs_Q,\bslambda_Q)$
and $\tilde M_{(Q)}$ by
\begin{equation}
  M_{(Q)}:=W+(A_Q)^TS_Q^{-1}\Lambda_Q A_Q 
\:,
  \label{eq:CR_normat}
\end{equation}
with $W:= H+\varrho R$, where  
$\varrho\in(0,1]$ is a
regularization parameter that is updated at each iteration and 
$R\succeq \bzero$
a constant symmetric matrix such that $H+R \succ \bzero$.
Thus the inequality $W\succeq H$ is enforced, ensuring $f(\bx+\Dbxa)<f(\bx)$ 
(see Proposition~\ref{proposition:desc_mpc} below),
which in our context is critical for global convergence.
In the proposed algorithm, the modified coefficient matrix
is used in the computation of both a modified 
affine-scaling direction and a modified centering/corrector direction, which
thus solves
\begin{equation}
J(W,A_Q,\bs_Q,\bslambda_Q)
  \left[\begin{array}{c}   \Delta \bx^{\cor}\\ \Delta\bslambda_Q^{\cor}\\  \Delta \bs_Q^{\cor} \end{array}\right]=
  \left[\begin{array}{c}   \bzero\\ \bzero\\  \sigma\mu_{(Q)}\mathbf{1}-\Delta S_Q^{\aff}\Delta\bslambda_Q^{\aff} \end{array}\right]\:.
  \label{eq:CR_KKT_C}
\end{equation}
In the bottom block of the right-hand side of~\eqref{eq:CR_KKT_C}
(compared to~\eqref{eq:KKT_LS_C})
we have substituted
$\Delta S_Q^{\aff}$ and $\Delta\bslambda_Q^{\aff}$ for
$\Delta S^{\aff}$ and $\Delta\bslambda^{\aff}$, and replaced $\mu$ with
\begin{equation}
\mu_{(Q)}:=
\begin{cases}
\bs_Q^T\bslambda_Q/q\:, &\text{if } q\neq0\\
0\:, & \text{otherwise}
\end{cases}\:,
\label{eq:mu_Q}
\end{equation}
the duality measure for the ``reduced'' problem.
The corresponding normal equation system is given by 
\begin{equation}
\begin{alignedat}{2}
  M_{(Q)}\Delta \bx^{\cor}&=(A_Q)^TS_Q^{-1}(\sigma\mu_{(Q)}\mathbf{1}-\Delta S_Q^{\aff}\Delta\bslambda_Q^{\aff}),\\
  \Delta \bs_Q^{\cor}&=A_Q \Delta \bx^{\cor},\\
  \Delta\bslambda_Q^{\cor}&=S_Q^{-1}(-\Lambda_Q\Delta\bs_Q^{\cor}+\sigma\mu_{(Q)}\mathbf{1}-\Delta S_Q^{\aff}\Delta\bslambda_Q^{\aff})\:.
\end{alignedat}
\label{eq:normal_sys_CR_C}
\end{equation}
A partial search direction for the 
constraint-reduced MPC algorithm at
$(\bx,\, \bslambda,\, \bs)$ is then given by 
(see~\eqref{eqn:overall direction})
\begin{equation}
(\Delta \bx,\, \Delta\bslambda_Q,\, \Delta \bs_Q)=
(\Delta \bx^{\aff},\, \Delta\bslambda^{\aff}_Q,\, \Delta \bs^{\aff}_Q)+\gamma(\Delta \bx^{\cor},\, \Delta\bslambda^{\cor}_Q,\, \Delta \bs^{\cor}_Q),
\label{eq:search_dir_CR}
\end{equation}
where $\gamma$ is given by \eqref{eq:mixing_para}--\eqref{eq:mixing_para_1}, 
with $\mu_{(Q)}$ replacing $\mu$.%
\footnote{In the case that $q=0$ ($Q$ is empty), $\gamma$ is chosen to be zero. 
  Note that, in such case, there is no corrector direction, as the right-hand side of \eqref{eq:CR_KKT_C} vanishes.}

Algorithm~\ref{algo:CR-MPC},
including a stopping criterion, 
a simple update rule for 
$\varrho$,
and update rules (adapted from~\cite{WNTO-2012})
for the components $\lambda_i$ of the dual variable with $i\in\Qc$, 
but with the constraint-selection rule (in Step~2) left unspecified,
is formally stated below.%
\footnote{The ``modified MPC algorithm'' 
outlined in Section~\ref{subsec:MPC} is recovered as a special case 
by setting $\varrho^+=0$ and $Q=\{1,\ldots,m\}$ in Step 2 of
Algorithm~\ref{algo:CR-MPC}.}
Its core,
Iteration~\ref{algo:CR-MPC}, takes as input the current iterates 
$\bx$, $\bs>0$, $\bslambda>0$, $\tbl$, and produces
the next iterates $\bx^+$, $\bs^+>0$, $\bslambda^+>0$, $\tilde\bslambda^+$,
used as input to the next iteration. 
Here $\tbl$, with possibly $\tbl\not\geq0$, is asymptotically slightly
closer to optimality than $\bslambda$, and is used in the stopping
criterion.
While dual feasibility of $(\bx,\bslambda)$ is not enforced along the
sequence of iterates, a primal strictly feasible starting point
$\bx\in\cF_{P}^o$ is required, and primal feasibility of subsequent
iterates is enforced, as it allows for monotone descent of $f$,
which in the present context is key to global convergence.
(An extension of Algorithm~\ref{algo:CR-MPC} that allows for 
infeasible starting points is discussed in Section~\ref{subsec:infeasible}
below.)
Algorithm~\ref{algo:CR-MPC} makes use (in its stopping criterion and $\varrho$ update) 
of an ``error'' function $E\colon\bbR^n\times\bbR^m\to\bbR$ 
(also used in the constraint-selection {\rulename} in 
Section~\ref{subsec:rule1} below) given by
\begin{equation}
E(\bx,\bslambda):=\left\|(\|\bv(\bx,\bslambda)\|,\|\bw(\bx,\bslambda)\|)\right\|\:,
\label{eq:Exlambda}
\end{equation}
where 
\begin{equation}
\bv(\bx,\bslambda):=H\bx+\bc-A^T\bslambda, 
\quad w_i(\bx,\bslambda):=\min\{|s_i|,|\lambda_i|\}, ~i=1,\ldots,m,
\label{eq:v,w}
\end{equation}
with $\bs:=A\bx-\bb$, and where the norms are arbitrary.
Here $E$ measures both dual feasibility (via $\bv$) and complementary slackness (via $\bw$).
Note that, for $\bx\in\cF_P$ and $\bslambda\geq\bzero$, $E(\bx,\bslambda)=0$
if and only if $(\bx,\bslambda)$ solves (P)--(D).

\renewcommand{\themyalg}{CR-MPC}

\begin{myalg}[\small A Constraint-Reduced variant of MPC Algorithm for CQP]
\label{algo:CR-MPC}
\begin{algorithmic}[1]

 \Statex{\bf Parameters: }{$\varepsilon\geq0$, $\tau\in[0,1)$, 
$\omega\in(0,1)$, $\varkappa\in(0,1)$, $\nu\geq 2$,
$\lambda^{\max}>0$, 
$\underline{\lambda}\in(0,\lambda^{\max})$, 
and $\bar E >0$.\footnote{\label{footnote:E}For scaling reasons, it
may be advisable to set the value of $\bar E$
to the initial value of $E(\bx,\bslambda)$ (so that, in Step~2
of the initial iteration, $\varrho$ is set to 1, and $W$ to $H+R$).
This was done in the numerical tests reported in 
Section~\ref{sec:opt_num_results}.}
A symmetric $n\times n$ matrix $R\succeq\bzero$ such 
that $H+R\succ\bzero$.}
 
 \Statex{\bf Initialization: }{
$\bx\in\cF_P^o$,\footnote{Here 
it is implicitly assumed that $\cF_P^o$ is nonempty.  This assumption
is subsumed by Assumption~\ref{assum:non_empty_bdd_sol} below.}
$\bslambda>\bzero$, $\bs:= A \bx-\bb>\bzero$, 
$\tilde{\bslambda}:=\bslambda$.
}
\smallskip
\Statex{\bf Iteration CR-MPC:}
{
  \Statex{\bf Step 1.} 
Terminate if either (i) $\nabla f(\bx)=\bzero$, in which 
case $(\bx,\bzero)$ is optimal for (P)--(D), or (ii)
  \begin{equation}
\min\left\{ E(\bx,\bslambda), E(\bx,[\tilde{\bslambda}]_+)\right\}
   < \varepsilon,
   \label{eq:error_term}
  \end{equation}
in which case $(\bx,[\tilde\bslambda]_+)$ is declared 
$\varepsilon$-optimal for (P)--(D) if $E(\bx,\bslambda)\geq 
E(\bx,[\tilde{\bslambda}]_+)$, and $(\bx,\bslambda)$ is otherwise.
  \Statex{\bf Step 2.} Select a working set $Q$. Set $q:=|Q|$. 
  Set $\varrho:=\min\{1,\frac{E(\bx,\bslambda)}{\bar E}\}$.  Set $W:=H+\varrho R$.
  \Statex{\bf Step 3.} Compute approximate normal matrix $ M_{(Q)}:=W+\sum_{i\in Q} \frac{\lambda_i}{s_i}\ba_i\ba_i^T$. 
  \Statex{\bf Step 4.} Solve 
\begin{equation}
  M_{(Q)}\Delta \bx^{\aff} = -\nabla f(\bx)\:,
  \label{eq:normal_sys_CR_P_x}
\end{equation}
and set 
\begin{equation}
  \Delta \bs^{\aff} := A \Delta \bx^{\aff}\:,\quad
  \Delta\bslambda^{\aff}_Q:=-\bslambda_Q-S_Q^{-1}\Lambda_Q\Delta\bs_Q^{\aff}\:.
  \label{eq:normal_sys_CR_P}
\end{equation}

  \Statex{\bf Step 5.} Compute the affine-scaling step 
  \begin{equation}
  \alpha^{\aff} := \text{argmax}\{\alpha\in[0,\,1]\,|\,\bs+\alpha\Delta\bs^{\aff}\geq \bzero,\,\,\bslambda_Q+\alpha\Delta\bslambda^{\aff}_Q\geq \bzero \}\:.
  \label{eq:alpha_aff}
  \end{equation}
  \Statex{\bf Step 6.} Set $\mu_{(Q)}$ as in \eqref{eq:mu_Q}. Compute centering parameter $\sigma:=(1-\alpha^{\aff})^3$.
  \Statex{\bf Step 7.} Solve \eqref{eq:CR_KKT_C} for the corrector direction $(\Delta \bx^{\cor},\, \Delta\bslambda_Q^{\cor},\, \Delta \bs_Q^{\cor})$, and set $\Delta \bs^{\cor} := A \Delta \bx^{\cor}$.
  \Statex{\bf Step 8.} If $q=0$, set $\gamma:=0$. Otherwise, compute $\gamma$ 
  as in \eqref{eq:mixing_para}--\eqref{eq:mixing_para_1}, 
  with $\mu_{(Q)}$ replacing $\mu$.
Compute the search direction 
\begin{equation}
(\Delta \bx,\, \Delta\bslambda_Q,\, \Delta \bs):=
(\Delta \bx^{\aff},\, \Delta\bslambda_Q^{\aff},\, \Delta \bs^{\aff})+\gamma(\Delta \bx^{\cor},\, \Delta\bslambda_Q^{\cor},\, \Delta \bs^{\cor})\:.
\label{eq:MPC_direction_CR}
\end{equation}  
Set $\tilde{\lambda}^+_i = \lambda_i+\Delta\lambda_i$, $\forall i\in Q$, and $\tilde{\lambda}^+_i = 0$, $\forall i\in\Qc$.
 \Statex{\bf Step 9.} Compute the primal and dual steps
  $\alpha_\prim$ and $\alpha_\dual$ by 
  \begin{equation}
  \begin{alignedat}{2}
  \bar{\alpha}_\prim &:= \text{argmax}\{\alpha:\bs+\alpha\Delta\bs\geq \bzero\},\quad
   &&\alpha_\prim := \min\{1,\,\max\{\varkappa\bar{\alpha}_\prim,\,\bar{\alpha}_\prim-\|\Delta\bx\|\}\},\\
     \bar{\alpha}_\dual &:= \text{argmax}\{\alpha:\bslambda_Q+\alpha\Dbl_Q\geq \bzero\},\quad
   &&\alpha_\dual := \min\{1,\,\max\{\varkappa\bar{\alpha}_\dual,\,\bar{\alpha}_\dual-\|\Delta\bx\|\}\}\:.
  \end{alignedat}
  \label{eq:step_size}
  \end{equation}
  \Statex{\bf Step 10.} Updates:
  \begin{equation}
  (\bx^{+}, \bs^{+}):= 
  (\bx, \bs) + 
  (\alpha_\prim\Delta\bx, \alpha_\prim\Delta\bs)\:.
  \label{eq:update}
  \end{equation}
  Set 
$\chi := \|\Delta\bx^{\aff}\|^\nu
+\|[\bslambda_Q+\Delta\bslambda_Q^{\aff}]_-\|^\nu$.
Set
  \begin{equation}
\lambda^{+}_i := 
\max\{
  \min\{\lambda_i+\alpha_\dual\Delta\lambda_i,\,\lambda^{\text{max}}\},\,
  \min\{\chi,\,\underline{\lambda}\}
\},\,
\forall i\in Q\:.
  \label{eq:update_lambda_Q} 
 \end{equation}
  Set $\mu_{(Q)}^{+} := {(\bs_Q^{+})^T(\bslambda_Q^{+})}/{q}$ if $q\neq0$,
  otherwise set $\mu_{(Q)}^{+} :=0$.
  Set
  \begin{equation}
 \lambda^{+}_i := 
\max\{
  \min\{{\mu_{(Q)}^{+}}/{s_i^{+}},\,\lambda^{\text{max}}\},\,
  \min\{\chi,\,\underline{\lambda}\} 
\},\,
\forall i\in\Qc\:.
  \label{eq:update_lambda_notQ} 
  \end{equation}
  }
\end{algorithmic}
\end{myalg}

A few more comments are in order concerning Algorithm~\ref{algo:CR-MPC}.
First, the stopping criterion is a variation on that 
of~\cite{JOT-12,WNTO-2012},
involving both $\bslambda$ and $[\tilde\bslambda]_+$ instead of only 
$\bslambda$; in fact the 
latter will fail when the parameter $\lambda^{\max}$ 
(see~\eqref{eq:update_lambda_Q}--\eqref{eq:update_lambda_notQ}) is not large enough
and may fail when second order sufficient conditions are
not satisfied, while we prove below
(Theorem~\ref{thm:convergence}(iv)) that the new criterion is 
eventually satisfied indeed, in that the iterate $(\bx,\bslambda)$ converges
to a solution (even if it is not unique), be it on a mere subsequence. 
Second, our update formula for the regularization 
parameter $\varrho$ in Step~2 improves on that in~\cite{WTA-2014} 
($\varrho^+=\min\{\chi,\chi_{\max}\}$ in the notation of this paper,
where $\chi_{\max}$ is a user-defined constant) as it fosters a ``smooth''
evolution of $W$ from the initial value of $H+R$, with $R$ specified by the user,
at a rate no faster than that required for local q-quadratic convergence.
And third, $R\succeq0$ should be selected to compensate for 
possible ill-conditioning of $H$---so as to mitigate possible 
early ill-conditioning of $M_{(Q)}$.  (Note that
a nonzero $R$ may be beneficial even when $H$ is non-singular.)

It is readily established that, 
starting from a strictly feasible point,
regardless of the choice made for $Q$ in
Step~2, 
Algorithm~\ref{algo:CR-MPC}
either stops at Step 1 after finitely 
many iterations, or generates
infinite sequences $\{E_k\}_{k=0}^\infty$, $\{\bx^k\}_{k=0}^\infty$, 
$\{\bslambda^k\}_{k=0}^\infty$, 
$\{\tilde\bslambda^k\}_{k=0}^\infty$,
$\{\bs^k\}_{k=0}^\infty$, $\{\chi_k\}_{k=0}^\infty$, 
$\{Q_k\}_{k=0}^\infty$,
$\{\varrho_k\}_{k=0}^\infty$, and $\{W_k\}_{k=0}^\infty$, 
with $\bs^k=A\bx^k-\bb>\bzero$ and $\bslambda^k>\bzero$ for all $k$.
($E_0$, $\chi_0$, $\varrho_0$, and $W_0$ correspond to the
values of $E_k$, $\chi_k$, $\varrho_k$, and $W_k$ computed
in the initial iteration, while the
other initial values are provided in the ``Initialization'' step.)  Indeed, if 
the algorithm does not
terminate at Step~1, then $\nabla f(\bx)\not=\bzero$, i.e., $\Dbxa\not=\bzero$
(from~\eqref{eq:normal_sys_CR_P_x}, since $M_{(Q)}$ is invertible); it 
follows that $\Dbx\not=\bzero$ 
(if $\Dbxc\not=\bzero$, since $\tau\in[0,1)$, \eqref{eq:mixing_para} 
yields $\gamma<\|\Dbxa\|/\|\Dbxc\|$) and, since $\Dbxa\not=\bzero$ implies $\chi>0$,
\eqref{eq:step_size}, \eqref{eq:MPC_direction_CR}, \eqref{eq:update}, \eqref{eq:update_lambda_Q},
and~\eqref{eq:update_lambda_notQ} imply that $\bs^+=A\bx^+-\bb>\bzero$
and $\bslambda^+>\bzero$.
From now on, we assume that infinite sequences are generated.

\subsection{Extensions: Infeasible Starting Point, Equality Constraints}
\label{subsec:infeasible}

Because, in our constraint-reduction context, convergence is
achieved by enforcing descent of the objective
function at every iteration, infeasible starts cannot be
accommodated as, e.g., in~S.~Mehrotra's original paper \cite{Mehrotra-1992}.
The penalty-function approach proposed and analyzed in
\cite[Chapter 3]{He-Thesis}
in the context of constraint-reduced affine scaling 
for CQP (adapted from a scheme introduced in \cite{TWBUL-2003}
for a nonlinear optimization context) fits right in however.  
(Also see \cite{HT12} for the 
linear optimization case.)
Translated to the notation of the present paper, it substitutes 
for (P)--(D) the primal-dual 
pair\footnote{An $\ell_\infty$
penalty function can be substituted for this $\ell_1$ penalty function
with minor adjustments: see \cite{He-Thesis,HT12} for details.}
\begin{equation}
\minimize_{\bx\in\bbR^n,\,\bz\in\bbR^m} \: f(\bx)+\varphi\bone^T\bz
\quad \mbox{s.t.~} \:  A\bx +\bz \geq \bb\:, \bz\geq\bzero\:,
\tag{P$_\varphi$}
\label{eq:primal_cqp_rho}
\end{equation}
\begin{equation}
\maximize_{\bx\in\bbR^n,\,\bslambda\in\bbR^m,\bu\in\bbR^m}\:  
-\frac{1}{2}\bx^TH\bx+\bb^T\bslambda
\mbox{~s.t.~} \:    H\bx+\bc-A^T\bslambda=\bzero\:,
\bslambda+\bu = \varphi\bone, (\bslambda,\bu)\geq\bzero, 
\tag{D$_\varphi$}
\label{eq:dual_cqp_rho}
\end{equation}
with $\varphi>0$ a scalar penalty parameter, for which primal-strictly-feasible
points $(\bx,\bz)$ are readily available.  
Hence, given $\varphi$, this problem can be handled by Algorithm~\ref{algo:CR-MPC}.%
\footnote{
It is readily checked that, given the simple form in which $\bz$ enters 
the constraints, for dense problems, 
the cost of forming $M^{(Q)}$ still dominates and
is still approximately $|Q|n^2/2$, with still, typically, $|Q|\ll m$.}
Such $\ell_1$ penalty function is known to be exact, i.e., for some unknown,
sufficiently large (but still moderate) value of $\varphi$, solutions
$(\bx^*,\bz^*)$ to \eqref{eq:primal_cqp_rho} are such that $\bx^*$ 
solves \eqref{eq:primal_cqp};
further (\cite{He-Thesis,HT12}), $\bz^*=\bzero$.
In \cite{He-Thesis,HT12}, an adaptive scheme is proposed for increasing 
$\varphi$ to such value.
Applying this scheme on \eqref{eq:primal_cqp_rho}--\eqref{eq:dual_cqp_rho}
allows Algorithm~\ref{algo:CR-MPC} to handle infeasible starting
points for \eqref{eq:primal_cqp}--\eqref{eq:dual_cqp}.
We refer the reader to \cite[Chapter 3]{He-Thesis} for details.

Linear equality constraints of course can be handled by first projecting
the problem on the associated affine space, and then run 
Algorithm~\ref{algo:CR-MPC} on that affine space.
A weakness of this approach though is that it does not adequately
extend to the case of sparse problems (discussed in the Conclusion
section (Section 4) of this paper), as projection may destroy sparsity.
An alternative approach is, again, via augmentation:  Given the
constraints $C\bx=\bd$, with $\bd\in\bbR^p$, solve the problem
\begin{equation}
\minimize_{\bx\in\bbR^n,\,\by\in\bbR^p} \: f(\bx)+\varphi\bone^T\by
\quad \mbox{s.t.~} \:  C\bx +\by \geq \bd\:, ~C\bx - \by \leq \bd\:.
\label{eq:primal_cqp_rho_equals}
\end{equation}
(Note that, taken together, the two constraints imply $\by\geq\bzero$.)
Again, given $\varphi>0$ and using the same adaptive scheme 
from~\cite{He-Thesis,HT12}, this problem can be tackled by 
Algorithm~\ref{algo:CR-MPC}.

\subsection{A Class of Constraint-Selection Rules}
\label{subsec:working_set_guidelines}
Of course, the quality of the search directions is highly dependent 
on the choice of the working set $Q$.
Several constraint-selection rules have been proposed for 
constraint-reduced algorithms on various classes of optimization 
problems, such as linear
optimization~\cite{WNTO-2012,WTA-2014,TAW-06}, convex quadratic
optimization~\cite{JOT-12,JOT-08}, semi-definite 
optimization~\cite{Park-OLeary-2015,Park2016},
and nonlinear optimization~\cite{CWH06}.
In~\cite{WNTO-2012, TAW-06, WTA-2014}, the cardinality $q$ of $Q$ is constant
and decided at the outset.  Because in the non-degenerate case the set of
active constraints at the solution of~\eqref{eq:primal_cqp} 
with $H=\bzero$
is at least equal to the number $n$ of primal variables, $q\geq n$ is usually 
enforced in that context.  In~\cite{JOT-12}, which like this paper deals 
with quadratic problems, $q$ was allowed to vary from iteration
from iteration, but $q\geq n$ was still enforced throughout (owing to
the fact that, in the regular case, there are no more than $n$ active
constraints at the solution).  Here we propose to again allow $q$ to
vary, but in addition to not {\sl a priori} impose a positive lower 
bound on $q$.

The convergence results stated in Section~\ref{subsec:conv_ana} 
below are valid with any constraint-selection rule 
that satisfies the following condition.

\renewcommand*{\thecondition}{CSR}

\begin{condition}
Let $\{(\bx^k,\bslambda^k)\}$ be the sequence constructed by Algorithm~\ref{algo:CR-MPC} 
with the constraint-selection rule under consideration, and let $Q_k$ be the working set 
generated by the constraint-selection rule at iteration $k$.
Then the following holds: (i) if $\{(\bx^k,\bslambda^k)\}$ is bounded away from $\cF^*$,
then, for all limit points $\bx^\prime$ such that $\{\bx^k\}\to\bx^\prime$ on some infinite
index set $K$, $\cA(\bx^\prime)\subseteq Q_k$ for all large enough $k\in K$;
and (ii) 
if~\eqref{eq:primal_cqp}--\eqref{eq:dual_cqp} has a unique
solution $(\bx^*,\bslambda^*)$ and strict complementarity holds at $\bx^*$, 
and if $\{\bx^k\}\to \bx^*$, then $\cA(\bx^*)\subseteq Q_k$ 
for all $k$ large enough.  
\label{cond:working_set_GC}
\end{condition}

\smallskip\noindent
Condition~\ref{cond:working_set_GC}(i) aims at preventing convergence 
to non-optimal primal point, and hence (given a bounded sequence
of iterates) forcing convergence to solution points.  
Condition~\ref{cond:working_set_GC}(ii) is important for fast local 
convergence to set in.
A specific rule that satisfies Condition~\ref{cond:working_set_GC}, 
{\rulename},
used in our numerical experiments, is presented in Section~\ref{subsec:rule1}
below.

\subsection{Convergence Properties}
\label{subsec:conv_ana}
The following standard assumptions are used in portions of the analysis.
\noindent
\begin{assumption}
$\cF_P^o$ is nonempty and $\cF_P^*$ is 
nonempty and bounded\,%
\footnote{Nonemptiness and boundedness of $\cF_P^*$
are equivalent to dual strict feasibility (e.g.,~\cite{DrummondSvaiter99}).}
.
\label{assum:non_empty_bdd_sol}
\end{assumption}
\noindent
\begin{assumption}\footnote{\label{footnote}Equivalently
(under the sole assumption that $\cF_P^*$ is nonempty)
$A_{\cA(\bx)}$ has full row rank at all $\bx\in\cF_P$.
In fact, while we were not able to carry out the analysis 
without such (strong) assumption (the difficulty being to rule out 
convergence to non-optimal stationary points), numerical experimentation 
suggests that the assumption is immaterial.}
At every stationary point $\bx$, 
$A_{\cA(\bx)}$ has full row rank.
\label{assum:lin_ind_act}
\end{assumption}
\begin{assumption}
There exists a (unique) $\bx^*$ where the second-order sufficient condition of optimality with strict complementarity holds, with (unique) $\bslambda^*$.
\label{assum:singleton_sol+strict_complementary}
\end{assumption}
Assumption~\ref{assum:singleton_sol+strict_complementary}, mostly
used in the local analysis, 
subsumes Assumption~\ref{assum:non_empty_bdd_sol}.

Theorem~\ref{thm:convergence}, 
proved in Appendix~\ref{appendix:GC},
addresses global convergence.
\begin{theorem}
Suppose that the constraint-selection rule invoked in Step~2 of
Algorithm~\ref{algo:CR-MPC} satisfies Condition~\ref{cond:working_set_GC}.
First suppose that $\varepsilon=0$, that the iteration never stops,
and that  
Assumptions~\ref{assum:non_empty_bdd_sol}
and~\ref{assum:lin_ind_act} hold.
Then (i) the infinite sequence $\{\bx^k\}$ it constructs converges to the
primal solution set $\cF_P^*$; if in addition,
Assumption~\ref{assum:singleton_sol+strict_complementary}
holds, then
(ii)
$\{(\bx^k, \tblk)\}$
converges to the unique primal--dual solution
$(\bx^*, \bslambda^*)$ and
$\{(\bx^k, \bslambda^k)\}$ converges to $(\bx^*, \bsxi^*)$,
with $\xi^*_i:=\min\{\lambda_i,\lambda^{\max}\}$ for all $i\in\bm$,
and (iii) for sufficiently large $k$,
the working set $Q_k$ contains $\cA(\bx^*)$.

Finally, suppose again that Assumptions~\ref{assum:non_empty_bdd_sol}
and~\ref{assum:lin_ind_act} hold.
Then (iv) if $\varepsilon>0$, Algorithm~\ref{algo:CR-MPC} 
stops (in Step~1) after finitely many iterations.
\label{thm:convergence}
\end{theorem}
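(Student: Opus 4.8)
The plan is to prove the four assertions in sequence, with parts (i)--(iii) relying on a "global" analysis (presumably carried out in Appendix~\ref{appendix:GC} via a chain of preparatory lemmas such as Proposition~\ref{prop:dx_to_zero}), and part (iv) following from parts (i)--(iii) together with the structure of the stopping test. For part (i), the key mechanism is monotone descent: since $W_k\succeq H$ at every iteration, Proposition~\ref{proposition:desc_mpc}/Proposition~\ref{proposition:desc_mpc}-type results give $f(\bx^k+\Dbxa^k)<f(\bx^k)$, and the first term in the mixing parameter~\eqref{eq:mixing_para}--\eqref{eq:mixing_para_1} guarantees that the full step retains a definite fraction of that decrease, so $\{f(\bx^k)\}$ is monotone nonincreasing; since $\{\bx^k\}\subseteq\cF_P$ and $\cF_P^*$ is nonempty and bounded (Assumption~\ref{assum:non_empty_bdd_sol}), the sublevel set is bounded, so $\{\bx^k\}$ is bounded and $f(\bx^k)$ converges. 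First I would argue by contradiction: if $\{\bx^k\}$ does not converge to $\cF_P^*$, then it is bounded away from $\cF_P^*$, hence (since primal-dual solutions correspond to $\cF_P^*$) $\{(\bx^k,\bslambda^k)\}$ is bounded away from $\cF^*$; pick a limit point $\bx'$ along an index set $K$. By Condition~\ref{cond:working_set_GC}(i), $\cA(\bx')\subseteq Q_k$ for large $k\in K$, so the reduced normal matrix $M_{(Q_k)}$ "sees" all the active constraints at $\bx'$; combined with Assumption~\ref{assum:lin_ind_act} (full row rank of $A_{\cA(\bx')}$), this should force the affine-scaling direction not to vanish in the limit and, via the descent estimate, force $f(\bx^k)-f(\bx^{k+1})$ to be bounded below by a positive constant along $K$ unless $\bx'$ is stationary --- and a stationary primal-feasible point with the rank condition is optimal, contradicting that $\bx'$ is bounded away from $\cF_P^*$. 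The technical heart here is Proposition~\ref{prop:dx_to_zero} ($\Dbx^k\to\bzero$) and showing that $\Dbx^k\to\bzero$ at a non-stationary feasible point is impossible; I expect this to be the main obstacle, as it requires controlling the regularized reduced Hessian $M_{(Q_k)}$ uniformly (bounding its smallest eigenvalue away from zero on the relevant subspace) using only the active constraints captured by Condition~\ref{cond:working_set_GC}(i), while the $\lambda_i/s_i$ ratios for $i\in\cA(\bx')$ blow up.

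For part (ii), assume additionally Assumption~\ref{assum:singleton_sol+strict_complementary}, so $\cF_P^*=\{\bx^*\}$ and by part (i) $\bx^k\to\bx^*$; it remains to pin down the dual iterates. I would track $\tblk$ via Step~8: for $i\in Q_k$, $\tilde\lambda_i^+=\lambda_i+\Delta\lambda_i$, and since $\Dbl_{Q_k}\to\bzero$ (companion to $\Dbx\to\bzero$) and, by Condition~\ref{cond:working_set_GC}(ii) eventually $\cA(\bx^*)\subseteq Q_k$, the active components of $\tblk$ converge to the corresponding components of $\bslambda^*$ while inactive components are driven to $0$ (they are set to $0$ off $Q_k$, and for indices in $Q_k$ with $s_i$ bounded away from $0$ the centering keeps $\lambda_i$ small); here strict complementarity is what makes these two regimes clean. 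For $\bslambda^k$ itself, the update formulas~\eqref{eq:update_lambda_Q}--\eqref{eq:update_lambda_notQ} cap each component at $\lambda^{\max}$ and floor it at $\min\{\chi_k,\underline\lambda\}$; since $\chi_k\to0$, the floor becomes inactive asymptotically, giving $\lambda_i^k\to\min\{\lambda_i^*,\lambda^{\max}\}=\xi_i^*$ --- for inactive $i$ this is $0$ (via the $\mu_{(Q)}^+/s_i^+$ branch), for active $i$ it is the clipped true multiplier. Part (iii) is then essentially a restatement of Condition~\ref{cond:working_set_GC}(ii) once $\bx^k\to\bx^*$ has been established by parts (i)--(ii), so little extra work is needed beyond invoking the condition.

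Finally, for part (iv), suppose $\varepsilon>0$ but the algorithm never stops; then it generates infinite sequences, and Assumptions~\ref{assum:non_empty_bdd_sol} and~\ref{assum:lin_ind_act} hold, so part (i) gives $\bx^k\to\cF_P^*$. The subtlety is that part (ii) (hence convergence of the dual iterates) required Assumption~\ref{assum:singleton_sol+strict_complementary}, which is \emph{not} assumed in part (iv); so instead I would argue directly that $\min\{E(\bx^k,\bslambda^k),E(\bx^k,[\tblk]_+)\}\to0$ along a subsequence, which suffices to trigger~\eqref{eq:error_term} and contradict non-termination. The idea: since $\{\bx^k\}$ is bounded, extract $\bx^k\to\bx'\in\cF_P^*$ along $K$; there is an associated $\bslambda'\geq\bzero$ with $(\bx',\bslambda')\in\cF^*$, supported on $\cA(\bx')$. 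Using that $\Dbx^k\to\bzero$, $\chi_k\to0$, and --- from Condition~\ref{cond:working_set_GC}(i) applied in the "not bounded away" alternative, or more simply from the fact that the residual of the reduced system is driven to zero --- I would show $\bv(\bx^k,[\tblk]_+)\to\bzero$ and $w_i(\bx^k,[\tblk]_+)\to0$ for all $i$ along $K$ (the complementarity residual vanishes because for $i\notin\cA(\bx')$, $s_i^k$ is bounded below and $\tilde\lambda_i^k\to0$, while for $i\in\cA(\bx')$, $s_i^k\to0$), hence $E(\bx^k,[\tblk]_+)\to0$ along $K$. The potential gap is that $\tblk$ need not converge when the primal-dual solution is non-unique; the fix is to note that only the \emph{boundedness} of the relevant $\tilde\lambda_i^k$ (for $i$ near-active) and the decay of the rest is needed for the error function to go to zero along a subsequence --- boundedness coming from $\lambda^{\max}$ on $Q_k$ and from $\mu_{(Q)}^+/s_i^+$ with $s_i^+$ bounded below off $Q_k$. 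This is exactly the advantage of the modified stopping criterion advertised in the paper: it only asks for a \emph{subsequential} near-solution, so full dual convergence is not needed. I expect part (iv)'s dual-residual estimate under non-uniqueness to be the second main technical point, albeit less deep than the obstacle in part (i).
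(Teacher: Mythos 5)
There is a genuine gap, and it sits exactly at the crux of the global analysis. In your argument for part~(i) you conclude with ``a stationary primal-feasible point with the rank condition is optimal, contradicting that $\bx'$ is bounded away from $\cF_P^*$.'' That claim is false: in this paper a \emph{stationary} point is one satisfying the KKT equalities with a multiplier whose components may be \emph{negative}, and Assumption~\ref{assum:lin_ind_act} (full row rank of $A_{\cA(\bx)}$) only yields uniqueness of that multiplier, not its nonnegativity (e.g.\ minimize $-x$ over $\{x\le 1,\ x\ge -10\}$: $x=1$ is stationary with multiplier $-1$ and the rank condition holds, yet it is not optimal). Consequently your descent/contradiction scheme only shows that limit points are stationary; it does not rule out convergence to a non-optimal stationary point, which the authors themselves identify (footnote to Assumption~\ref{assum:lin_ind_act}) as the main difficulty. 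The paper closes this hole with a dedicated argument (Lemmas~\ref{lemma:common_multiplier} and~\ref{lemma:x_convergence}): if all limit points shared a multiplier $\hat\bslambda$ with some $\hat\lambda_{i_0}<0$, then eventually $\tilde\lambda_{i_0}^{k+1}<0$ forces $i_0\in Q_k$, the sign structure of the reduced system makes $\Delta s_{i_0}^k>0$ for all large $k$, so $s_{i_0}^k$ increases monotonically to a positive limit $\hat s_{i_0}>0$, and $\hat s_{i_0}\hat\lambda_{i_0}<0$ contradicts complementary slackness at stationary limit points. Nothing in your proposal substitutes for this sign-based monotonicity argument, so part~(i) (and hence everything downstream) is not established.

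Two secondary points. In part~(ii), the assertion $\Dbl_{Q_k}\to\bzero$ is not what holds (and is not needed): the paper shows $\bslambda^k_{Q}+\Dbl^k_{Q}\to\bslambda^*$, while $\bslambda^k\to\bsxi^*$ with the cap at $\lambda^{\max}$, so $\Delta\lambda_i^k$ need not vanish when $\lambda_i^*>\lambda^{\max}$; the proof also has to establish $\{\Dbxak\}\to\bzero$ for the whole sequence (done by contradiction via Proposition~\ref{prop:dx_to_zero} and nonsingularity of the limiting Jacobian), which your sketch presupposes. In part~(iv), your target (subsequential satisfaction of the stopping test without dual uniqueness) is the right one, but obtaining $E(\bx^k,[\tblk]_+)\to0$ on a subsequence requires $\Dbxa\to\bzero$ along suitably \emph{shifted} indices; the paper gets this through Proposition~\ref{prop:dx_to_zero} applied at $k-2$ and then transfers convergence to $(\bx^{k-1},\tilde\bslambda^{k-1})$, a step your appeal to mere boundedness of the near-active multiplier estimates does not replace.
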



\noindent
Fast local convergence is addressed next; Theorem~\ref{thm:q-quad}
is proved in Appendix~\ref{appendix:LQC}. 
\begin{theorem}
\label{thm:q-quad}
Suppose that Assumption~\ref{assum:singleton_sol+strict_complementary}
holds, that $\varepsilon=0$, that the iteration never stops,
and that
$\lambda^*_i<\lambda^{\rm{max}}$ for all $i\in\bm$.
Then there exist
$\rho>0$ and $C>0$
such that, if $\|(\bx-\bx^*,{\bslambda}-{\bslambda}^*)\|<\rho$
and $Q\supseteq\cA(\bx^*)$, then
\begin{equation}
\label{eq:r-quadtildelambda}
\|(\bx^+-\bx^*,\tilde{\bslambda}^+-{\bslambda}^*)\|
\leq C \|(\bx-\bx^*,{\bslambda}-{\bslambda}^*)\|^2.
\end{equation}
\end{theorem}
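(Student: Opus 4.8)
The plan is to show that the combined search direction at $(\bx,\bslambda)$ is, up to $O(\delta^2)$ terms (writing $\delta:=\|(\bx-\bx^*,\bslambda-\bslambda^*)\|$), an exact Newton step toward the reduced KKT point $z_Q^*:=(\bx^*,\bslambda_Q^*,\bs_Q^*)$, and that all step lengths used are $1-O(\delta)$; combining these gives the quadratic contraction. I would set $z_Q:=(\bx,\bslambda_Q,\bs_Q)$ and note $\|z_Q-z_Q^*\|=O(\delta)$ (since $\bs_Q-\bs_Q^*=A_Q(\bx-\bx^*)$). By strict complementarity, for $\rho$ small each $\lambda_i$ is bounded away from $0$ for $i\in\cA(\bx^*)$ and each $s_i$ away from $0$ for $i\notin\cA(\bx^*)$, while $E(\bx,\bslambda)=O(\delta)$ (the map $\bv$ is affine and vanishes at $(\bx^*,\bslambda^*)$, and $w_i\le\min\{|s_i|,|\lambda_i|\}=O(\delta)$ by strict complementarity); hence $\varrho=\min\{1,E/\bar E\}=O(\delta)$ and $W-H=\varrho R=O(\delta)$. (Also $\lambda_i^*<\lambda^{\max}$ guarantees, for $\rho$ small, that the caps in the multiplier updates are inactive, so $(\bx^*,\bslambda^*)$ is genuinely the target of the Newton process.)

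I would first establish a uniform regularity bound. The reduced Newton matrix $J(W,A_Q,\bs_Q,\bslambda_Q)$ has all blocks bounded near $(\bx^*,\bslambda^*)$ and, as $(\bx,\bslambda)\to(\bx^*,\bslambda^*)$, converges to $J(H,A_Q,\bs_Q^*,\bslambda_Q^*)$, which is nonsingular: the three conditions of Lemma~\ref{lemma:nonsingular_J} hold because $s_i^*+\lambda_i^*>0$ (strict complementarity), $A_{\cA(\bx^*)}$ has full row rank (Assumption~\ref{assum:lin_ind_act}; also implied by the second-order condition), and $[H\ A_{\cA(\bx^*)}^T]$ has full row rank (the second-order sufficient condition). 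Since only finitely many $Q$ satisfy $\cA(\bx^*)\subseteq Q\subseteq\bm$, continuity of matrix inversion yields $\rho_0>0$ and $\bar C>0$ with $\|J(W,A_Q,\bs_Q,\bslambda_Q)^{-1}\|\le\bar C$ for all such $Q$ and all $(\bx,\bslambda)\in B((\bx^*,\bslambda^*),\rho_0)$.

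Next I would analyze the affine-scaling direction. It solves $J(W,A_Q,\bs_Q,\bslambda_Q)(\Delta\bx^{\aff},\Delta\bslambda_Q^{\aff},\Delta\bs_Q^{\aff})=-\hat{G}(z_Q)$, where $\hat{G}(\bx,\bslambda_Q,\bs_Q):=(H\bx+\bc-A_Q^T\bslambda_Q,\ A_Q\bx-\bb_Q-\bs_Q,\ S_Q\bslambda_Q)$ is a quadratic map with $\hat{G}(z_Q^*)=\bzero$ (using $\cA(\bx^*)\subseteq Q$ and $\bslambda_{\Qc}^*=\bzero$) and $\hat{G}'(z_Q)=J(H,A_Q,\bs_Q,\bslambda_Q)$; thus the step is a modified-Newton step whose coefficient matrix differs from $\hat{G}'(z_Q)$ only by $\diag(\varrho R,\bzero,\bzero)=O(\delta)$. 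Since $\hat{G}$ is quadratic and $\|J^{-1}\|\le\bar C$, the standard modified-Newton estimate gives $\|z_Q+(\Delta\bx^{\aff},\Delta\bslambda_Q^{\aff},\Delta\bs_Q^{\aff})-z_Q^*\|=O(\delta^2)$, hence $\|(\Delta\bx^{\aff},\Delta\bslambda_Q^{\aff},\Delta\bs_Q^{\aff})\|=O(\delta)$, and from the third block ($s_i\Delta\lambda_i^{\aff}+\lambda_i\Delta s_i^{\aff}=-s_i\lambda_i$) one gets $\Delta s_i^{\aff}=-s_i(\lambda_i+\Delta\lambda_i^{\aff})/\lambda_i=-s_i(1+O(\delta))$ for $i\in\cA(\bx^*)$. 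Consequently $1-\alpha^{\aff}=O(\delta)$ (the binding line-search ratios are $1-O(\delta)$ by the analogous relations for $s_i$, $i\in\cA(\bx^*)$, and for $\lambda_i$, $i\notin\cA(\bx^*)$), so $\sigma=(1-\alpha^{\aff})^3=O(\delta^3)$; together with $\mu_{(Q)}=\bs_Q^T\bslambda_Q/q=O(\delta)$ and $\|\Delta S_Q^{\aff}\Delta\bslambda_Q^{\aff}\|=O(\delta^2)$, the right-hand side of \eqref{eq:CR_KKT_C} is $O(\delta^2)$, so $\|(\Delta\bx^{\cor},\Delta\bslambda_Q^{\cor},\Delta\bs_Q^{\cor})\|\le\bar C\,O(\delta^2)=O(\delta^2)$, and since $\gamma\in[0,1]$ the corrector's contribution is $O(\delta^2)$ as well. (If $Q=\emptyset$ then $\cA(\bx^*)=\emptyset$, $\gamma=0$, and the sub-case is immediate.) Hence the undamped combined step lands $O(\delta^2)$ from $z_Q^*$, so $\bx+\Delta\bx=\bx^*+O(\delta^2)$ and $\bslambda_Q+\Delta\bslambda_Q=\bslambda_Q^*+O(\delta^2)$.

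Finally I would control the step length $\alpha_\prim$ (note $\tilde\bslambda^+$ uses the full step, so $\alpha_\dual$ is not needed). For $i\notin\cA(\bx^*)$ the constraint $s_i+\alpha\Delta s_i\ge0$ is slack near $\alpha=1$ ($s_i$ bounded away from $0$, $\Delta s_i=O(\delta)$). For $i\in\cA(\bx^*)$, writing $\Delta s_i=\Delta s_i^{\aff}+\gamma\Delta s_i^{\cor}$, the third block of \eqref{eq:CR_KKT_C} gives $\lambda_i\Delta s_i^{\cor}=\sigma\mu_{(Q)}-\Delta s_i^{\aff}\Delta\lambda_i^{\aff}-s_i\Delta\lambda_i^{\cor}$, where $\sigma\mu_{(Q)}\ge0$ while $|\Delta s_i^{\aff}\Delta\lambda_i^{\aff}|=O(s_i\delta)$ and $s_i|\Delta\lambda_i^{\cor}|=O(s_i\delta^2)$ (using $\Delta s_i^{\aff}=-s_i(1+O(\delta))$ and $\|\Delta\bslambda_Q^{\cor}\|=O(\delta^2)$); hence $\Delta s_i^{\cor}\ge-O(s_i\delta)$, so $\Delta s_i\ge-s_i(1+O(\delta))$ and $s_i/(-\Delta s_i)\ge1-O(\delta)$ whenever $\Delta s_i<0$. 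Thus $\bar{\alpha}_\prim\ge1-O(\delta)$, and since $\|\Delta\bx\|=O(\delta)$ the formula in \eqref{eq:step_size} gives $\alpha_\prim\ge\min\{1,\bar{\alpha}_\prim-\|\Delta\bx\|\}\ge1-O(\delta)$. Then $\bx^+-\bx^*=(1-\alpha_\prim)(\bx-\bx^*)+\alpha_\prim(\bx+\Delta\bx-\bx^*)=O(\delta)\cdot O(\delta)+O(\delta^2)=O(\delta^2)$; for $\tilde\bslambda^+$, its $Q$-components equal $\bslambda_Q+\Delta\bslambda_Q=\bslambda_Q^*+O(\delta^2)$ and its $\Qc$-components equal $0=\bslambda_{\Qc}^*$ (strict complementarity, $\cA(\bx^*)\subseteq Q$), so $\|\tilde\bslambda^+-\bslambda^*\|=O(\delta^2)$, which is \eqref{eq:r-quadtildelambda}. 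I expect the step-length estimate just sketched to be the main obstacle---above all, handling an active index $i$ whose current slack $s_i$ is much smaller than $\delta$, where one must exploit both the exact complementarity-block relations and the sign $\sigma\mu_{(Q)}\ge0$ of the centering term; a secondary delicate point is the uniform bound $\|J(W,A_Q,\bs_Q,\bslambda_Q)^{-1}\|\le\bar C$ across the finitely many admissible working sets.
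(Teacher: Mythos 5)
Your proposal is correct and follows essentially the same route as the paper's proof in Appendix~\ref{appendix:LQC}: a perturbed-Newton analysis of the reduced KKT system in which the regularization error (via $\varrho=O(\|\bz-\bz^*\|)$), the corrector contribution, and the damping $1-\alpha_\prim$ are each shown to contribute only $O(\|\bz-\bz^*\|^2)$, with the step-size and corrector estimates resting on the same complementarity-row identities that underlie Lemmas~\ref{lemma:step_size_bound}, \ref{lemma:cor_dir_bound} and~\ref{lemma:min_step_size_bound}. The two differences are streamlinings rather than new ideas: you take the zero of the reduced map as the Newton target, so the paper's comparison of the reduced with the full Newton step (Lemma~\ref{lemma:aff_and_Newton_bound}) is not needed, and you prove the bound for $\tilde{\bslambda}^+$ exactly as stated, whereas the paper's proof additionally carries the estimate through the safeguarded update $\bslambda^+$ of \eqref{eq:update_lambda_Q}--\eqref{eq:update_lambda_notQ} (the form actually invoked in Proposition~\ref{prop:rule_cond_GC} and Corollary~\ref{cor:q-quadratic}).
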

When the constraint-selection rule satisfies
Condition~\ref{cond:working_set_GC}(ii), local q-quadratic convergence
is an immediate consequence of Theorems~\ref{thm:convergence} and \ref{thm:q-quad}.

\begin{corollary}
\label{cor:q-quadratic}
Suppose that Assumptions~\ref{assum:non_empty_bdd_sol}--\ref{assum:singleton_sol+strict_complementary} hold, 
that $\varepsilon=0$, that the iteration never stops, and that
$\lambda^*_i<\lambda^{\rm{max}}$ for all $i\in\bm$.
Further suppose that the constraint-selection rule invoked in Step~2
satisfies Condition~\ref{cond:working_set_GC}.
Then Algorithm~\ref{algo:CR-MPC} is locally q-quadratically convergent.  
Specifically, there exists $C>0$ such that, given any initial point $(\bx^0,\bslambda^0)$, 
for some $k^\prime>0$,
\begin{equation*}
\cA(\bx^*)\subseteq Q_k \quand
\|(\bx^{k+1}-\bx^*,{\bslambda}^{k+1}-{\bslambda}^*)\|
\leq C \|(\bx^k-\bx^*,{\bslambda}^k-{\bslambda}^*)\|^2\:, \quad\forall k>k^\prime\:.
\end{equation*}
\end{corollary}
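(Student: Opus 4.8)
The plan is to derive Corollary~\ref{cor:q-quadratic} essentially for free by combining the two theorems already in place, the only real work being to match up their hypotheses and to handle the ``warm-up'' phase before the local regime is entered. First I would invoke Theorem~\ref{thm:convergence}: under Assumptions~\ref{assum:non_empty_bdd_sol}--\ref{assum:singleton_sol+strict_complementary}, with $\varepsilon=0$ and the iteration never stopping, part~(ii) gives that $\{(\bx^k,\tblk)\}\to(\bx^*,\bslambda^*)$ and $\{(\bx^k,\bslambda^k)\}\to(\bx^*,\bsxi^*)$ where $\xi^*_i=\min\{\lambda^*_i,\lambda^{\max}\}$; since by hypothesis $\lambda^*_i<\lambda^{\max}$ for all $i$, we in fact have $\bsxi^*=\bslambda^*$, so $\{(\bx^k,\bslambda^k)\}\to(\bx^*,\bslambda^*)$ as well. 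Part~(iii) of Theorem~\ref{thm:convergence} gives $\cA(\bx^*)\subseteq Q_k$ for all sufficiently large $k$; alternatively, this follows directly from Condition~\ref{cond:working_set_GC}(ii), which applies because Assumption~\ref{assum:singleton_sol+strict_complementary} supplies the unique solution and strict complementarity and we have just established $\{\bx^k\}\to\bx^*$.

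Next I would apply Theorem~\ref{thm:q-quad}. Its hypotheses are exactly Assumption~\ref{assum:singleton_sol+strict_complementary}, $\varepsilon=0$, non-termination, and $\lambda^*_i<\lambda^{\max}$ for all $i$, all of which are in force; so it produces constants $\rho>0$ and $C>0$ such that whenever $\|(\bx-\bx^*,\bslambda-\bslambda^*)\|<\rho$ and $Q\supseteq\cA(\bx^*)$, the one-step contraction $\|(\bx^+-\bx^*,\tilde\bslambda^+-\bslambda^*)\|\leq C\|(\bx-\bx^*,\bslambda-\bslambda^*)\|^2$ holds. By the convergence established in the previous paragraph, there is $k_1$ with $\|(\bx^k-\bx^*,\bslambda^k-\bslambda^*)\|<\rho$ for all $k\geq k_1$, and there is $k_2$ with $\cA(\bx^*)\subseteq Q_k$ for all $k\geq k_2$; set $k'=\max\{k_1,k_2\}$. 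Then for every $k>k'$ (so in particular $k-1\geq k'$, hence $k-1\geq k_1$ and $k-1\geq k_2$), Theorem~\ref{thm:q-quad} applied at iteration $k-1$ with $Q=Q_{k-1}$ yields
\[
\|(\bx^{k+1}-\bx^*,\bslambda^{k+1}-\bslambda^*)\|\leq C\|(\bx^k-\bx^*,\bslambda^k-\bslambda^*)\|^2,
\]
and $\cA(\bx^*)\subseteq Q_k$ holds as well. One should be slightly careful that Theorem~\ref{thm:q-quad} bounds $\tilde\bslambda^+$ rather than $\bslambda^+$; I would note that since $\bslambda^+_i=\max\{\min\{\tilde\lambda^+_i,\lambda^{\max}\},\min\{\chi,\underline\lambda\}\}$ (from \eqref{eq:update_lambda_Q}--\eqref{eq:update_lambda_notQ}) and $\lambda^*_i\in(\min\{\chi,\underline\lambda\},\lambda^{\max})$ once $\chi$ is small, the clipping is inactive near $\bx^*$, so $|\lambda^+_i-\lambda^*_i|\leq|\tilde\lambda^+_i-\lambda^*_i|$ componentwise, whence the bound on $\tilde\bslambda^+$ transfers to $\bslambda^+$ (possibly after enlarging $k'$); this point is already implicit in the statement of Theorem~\ref{thm:convergence}(ii) and in the remark about $\bsxi^*=\bslambda^*$.

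The main obstacle—such as it is—is bookkeeping rather than mathematics: making sure the index thresholds from the ``eventually'' clauses of Theorem~\ref{thm:convergence}(ii)--(iii) and from the radius $\rho$ of Theorem~\ref{thm:q-quad} are taken together as a single $k'$, and confirming that the corrector/mixing machinery does not spoil the $\tblp\to\bslambda^+$ passage near the solution. Since all of these are consequences of statements already proved, the corollary follows with no new estimates; the only substantive input is the observation $\bsxi^*=\bslambda^*$ forced by $\lambda^*_i<\lambda^{\max}$, which is what lets the q-quadratic rate be stated in terms of $\bslambda^k$ (the quantity the user sees) rather than $\tblk$.
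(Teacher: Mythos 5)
Your overall route is the same as the paper's: the paper offers no separate argument for this corollary beyond the remark that it is an immediate consequence of Theorems~\ref{thm:convergence} and~\ref{thm:q-quad} once Condition~\ref{cond:working_set_GC}(ii) is in force, and your proposal is exactly that combination (Theorem~\ref{thm:convergence}(ii)--(iii) to get $\{(\bx^k,\bslambda^k)\}\to(\bx^*,\bsxi^*)=(\bx^*,\bslambda^*)$ via $\lambda_i^*<\lambda^{\max}$ and $\cA(\bx^*)\subseteq Q_k$ eventually, then Theorem~\ref{thm:q-quad} applied at each late iteration). The indexing slip (you should apply Theorem~\ref{thm:q-quad} at iteration $k$, not $k-1$, to bound iterate $k+1$ by iterate $k$) is harmless.

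The one place where your argument as written would fail is the passage from $\tilde{\bslambda}^{+}$ to $\bslambda^{+}$. The update is not $\lambda_i^+=\max\{\min\{\tilde\lambda_i^+,\lambda^{\max}\},\min\{\chi,\underline{\lambda}\}\}$: for $i\in Q$, \eqref{eq:update_lambda_Q} clips $\lambda_i+\alpha_\dual\Delta\lambda_i$ (i.e., $\thickbreve{\lambda}_i^+$, a damped step), so even with inactive clipping you must additionally control $(1-\alpha_\dual)|\Delta\lambda_i|$ (this is what Lemma~\ref{lemma:min_step_size_bound} and the bound $\|\Dbza_Q\|=O(\max\{\|\Delta^{\rm N}\bz\|,\|\bz-\bz^*\|\})$ are for); and for $i\in\Qc$, \eqref{eq:update_lambda_notQ} sets $\lambda_i^+$ from $\mu_{(Q)}^+/s_i^+$ and $\min\{\chi,\underline{\lambda}\}$, so $\lambda_i^+>0=\tilde\lambda_i^+=\lambda_i^*$ and your claimed componentwise inequality $|\lambda_i^+-\lambda_i^*|\leq|\tilde\lambda_i^+-\lambda_i^*|$ is false there; one must instead show that $\chi$ and $\mu_{(Q)}^+/s_i^+$ are $O(\max\{\|\Delta^{\rm N}\bz\|^2,\|\bz-\bz^*\|^2\})$, which is a nontrivial estimate. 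The gap is, however, inessential to the corollary: the appendix proof of Theorem~\ref{thm:q-quad} establishes \eqref{eq:MPC_conv_assum} for \emph{all} components of $\bz^+=(\bx^+,\bslambda^+)$, i.e., the q-quadratic bound holds with $\bslambda^{k+1}$ in place of $\tilde{\bslambda}^{k+1}$ (and this is precisely how the paper itself invokes Theorem~\ref{thm:q-quad}, e.g.\ in the proof of Proposition~\ref{prop:rule_cond_GC}); citing that fact, rather than a clipping argument, is the correct way to finish, and with it your proof is complete.
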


The foregoing  theorems and corollary (essentially) extend to the case
of infeasible starting point discussed in Section~\ref{subsec:infeasible}.
The proof follows the lines of that in~\cite[Theorem~3.2]{He-Thesis}.
While Assumptions~\ref{assum:non_empty_bdd_sol} 
and~\ref{assum:singleton_sol+strict_complementary}
remain unchanged, 
Assumption~\ref{assum:lin_ind_act} must be tightened to: For
every $\bx\in\bbR^n$, $\{\ba_i : \ba_i^T\bx \leq b_i\}$ is a linearly
independent set.\footnote{\label{equality} In fact, given any known
upper bound $\overline z$ to $\{\bz^k\}$, this assumption can 
be relaxed to merely requiring linear independence of the set
$\{\ba_i : b_i-{\overline z} \leq \ba_i^T\bx \leq b_i\}$,
which tends to the set of active constraints when $\overline z$
goes to zero.  This can be done, e.g., with $\overline z=c\|\bz^0\|_\infty$,
with any $c>1$, if the constraint $\bz\leq c\bz^0$ is added to the augmented
problem.}
(While this assumption appears to be rather 
restrictive---a milder condition is used in~\cite[Theorem~3.2]{He-Thesis} 
and~\cite{HT12}, but we believe it is 
insufficient---footnote~\ref{footnote} applies here 
as well.)  

Subject to such tightening of Assumption~\ref{assum:lin_ind_act},
Theorem~\ref{thm:convergence} still holds.
Further, Theorem~\ref{thm:q-quad} and Corollary~\ref{cor:q-quadratic}
(local quadratic convergence) also hold, but for the {\em augmented} set 
of primal--dual variables, $(\bx,\bz,\bslambda,\bu)$.    While proving
the results for $(\bx,\bslambda)$ might turn out to be possible, an 
immediate consequence of q-quadratic convergence for 
$(\bx,\bz,\bslambda,\bu)$ is {\em r-quadratic}
convergence for $(\bx,\bslambda)$.

Under the same assumptions, 
Theorems~\ref{thm:convergence} and~\ref{thm:q-quad} and 
Corollary~\ref{cor:q-quadratic} still hold in the presence of
equality constraints $C\bx=\bd$ via transforming the problem
to~\eqref{eq:primal_cqp_rho_equals}, provided 
$\{\ba_i:\ba_i^T\bx\leq b_i\}\cup\{\bc_i:i=1,\ldots,p\}$ is
a linearly independent set for every $\bx\in\bbR^n$, 
with $\bc_i$ the $i$th row of $C$.
Note that it may be beneficial to choose $\bx^0$ to lie on the affine space
defined by $C\bx=\bd$, in which case 
the components of $\by^0$ can be chosen quite small, 
and to include the 
constraint $\by\leq c \by^0$ for some $c>1$ as suggested in footnote~\ref{equality}.

\subsection{A New Constraint-Selection Rule}
\label{subsec:rule1}

The proposed {\rulename}, stated below,
first computes a threshold value 
based on the amount of decrease of the error $E_k:=E(\bx^k,\bslambda^k)$, 
and then selects the working set
by including all constraints with slack values less than the 
computed threshold.

\renewcommand{\thealgorithm}{}

\begin{algorithm}[ht]
 \floatname{algorithm}{\rulename}
\begin{algorithmic}[1]

 \Statex{\bf Parameters: }{$\bar{\delta}>0$, $0<\beta<\theta<1$.}
 
 \Statex{\bf Input: }{Iteration: $k$, Slack variable: $\bs^k$, 
    Error: $E_{\min}$ (when $k>0)$, $E_k:=E(\bx^k,\bslambda^k)$, 
    Threshold: $\delta_{k-1}$.}
 \Statex{\bf Output: }{Working set: $Q_k$, 
    Threshold: $\delta_k$,
    Error: $E_{\min}$.}
 
  \If{$k=0$}
  	\State $\delta_k := \bar{\delta}$
 	\State $E_{\min} := E_k$
  \ElsIf{$E_k\leq\beta E_{\min}$}
 	\State $\delta_k := \theta \delta_{k-1}$
 	\State $E_{\min} := E_k$
  \Else
  	\State $\delta_k := \delta_{k-1}$

  	\EndIf
 \State Select $Q_k:=\{i\in\bm \,|\, s^k_i\leq\delta_k\}$.

 \caption{Proposed Constraint-Selection Rule}
\end{algorithmic}
\end{algorithm}



\noindent
A property of 
$E$ 
that plays a key role in proving that {\rulename} satisfies 
Condition~\ref{cond:working_set_GC} is stated next; it does not
require strict complementarity.  It was established in~\cite{FFK98},
within the proof of Theorem~3.12 (equation (3.13)); also 
see~\cite[Theorem~1]{Hager-Gowda-99}, \cite[Theorem~A.1]{Wright-02}, 
as well as~\cite[Lemma~2, with the ``vector of perturbations'' set
to zero]{Cartis2016} for an 
equivalent, yet global inequality in the case of linear optimization 
($H=\bzero$), under an additional dual (primal in the context of~\cite{Cartis2016}) 
feasibility assumption ($(H\bx)-A^T\bslambda+\bc=\bzero$).
A self-contained
proof in the case of CQP is given here for the sake of
completeness and ease of reference.

\begin{lemma}
\label{lem:E>c||z-z*||}
Suppose $(\bx^*,\bslambda^*)$ solves (P)--(D), let $\cI:=\{i\in\bm:\lambda_i^*>0\}$,
and suppose that (i) $A_{\cA(\bx^*)}$ and (ii) $[H~(A_\cI)^T]$ have
full row rank.
Then there exists $c>0$ and some neighborhood $V$ of the
origin 
such that
\[
E(\bx,\bslambda) \geq c \|(\bx-\bx^*,\bslambda-\bslambda^*)\|
{\rm ~~whenever~~} (\bx-\bx^*,\bslambda-\bslambda^*)\in V.
\]
\end{lemma}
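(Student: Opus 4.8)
The plan is to establish the lower bound in two stages, exploiting the structure of $E$ as a norm of the pair $(\|\bv(\bx,\bslambda)\|,\|\bw(\bx,\bslambda)\|)$.  First I would argue that it suffices to bound each of the two pieces appropriately; since all norms on a finite-dimensional space are equivalent, it is enough to show that, in some neighborhood $V$ of the origin, $\|\bv(\bx,\bslambda)\| + \|\bw(\bx,\bslambda)\|$ dominates $\|(\bx-\bx^*,\bslambda-\bslambda^*)\|$ up to a positive constant.  Write $\bx = \bx^* + \Delta\bx$, $\bslambda = \bslambda^* + \Delta\bslambda$, and split the multiplier perturbation into its components on $\cI$ and on $\cI^c$.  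Since $\lambda_i^* = 0$ and $s_i^* > 0$ for $i \in \cI^c$ (strict complementarity is \emph{not} assumed, so only $\lambda_i^* s_i^* = 0$ is available—see the remark below), and $s_i^* = 0$ for $i \in \cA(\bx^*)$, with $\cA(\bx^*) \subseteq \cI^c$ in general but not necessarily equal, I would need to handle the indices carefully; this is where the two full-rank hypotheses come in.

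\medskip\noindent
\emph{Step 1 (controlling $\Delta\bx$).}  On indices $i$ with $\lambda_i^* > 0$ (i.e.\ $i \in \cI$), we have $s_i^* = 0$, hence $s_i = s_i^* + \ba_i^T\Delta\bx = \ba_i^T\Delta\bx$.  For small perturbations, $\lambda_i$ stays bounded away from $0$ on $\cI$, so $w_i(\bx,\bslambda) = \min\{|s_i|,|\lambda_i|\} = |s_i| = |\ba_i^T\Delta\bx|$ there; thus $\|\bw\|$ controls $\|A_\cI \Delta\bx\|$.  The dual-feasibility residual gives $\bv(\bx,\bslambda) = H\Delta\bx - A^T\Delta\bslambda = H\Delta\bx - (A_\cI)^T\Delta\bslambda_\cI - (A_{\cI^c})^T\Delta\bslambda_{\cI^c}$.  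Combining the $\bv$ residual with the bound on $A_\cI\Delta\bx$ and using that $\lambda_i = \Delta\lambda_i$ is small on $\cI^c$ (so that, after possibly shrinking $V$, $w_i = |\lambda_i|$ there and $\|\Delta\bslambda_{\cI^c}\| \le \|\bw\|$), one isolates $H\Delta\bx - (A_\cI)^T\Delta\bslambda_\cI$ in terms of $E$.  Because $[H~(A_\cI)^T]$ has full row rank, the map $(\bu,\bv)\mapsto H\bu - (A_\cI)^T\bv$ restricted suitably is bounded below; together with the $A_\cI\Delta\bx$ bound this yields $\|\Delta\bx\| \le c_1 E(\bx,\bslambda)$ locally.

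\medskip\noindent
\emph{Step 2 (controlling $\Delta\bslambda$).}  With $\|\Delta\bx\|$ now controlled, return to $\bv = H\Delta\bx - A^T\Delta\bslambda$: this gives $\|A^T\Delta\bslambda\| \le \|\bv\| + \|H\|\,\|\Delta\bx\| \le c_2 E(\bx,\bslambda)$.  On $\cI^c$, $\|\Delta\bslambda_{\cI^c}\| = \|\bslambda_{\cI^c}\| \le \|\bw\| \le E$ as above.  For $i \in \cI \subseteq \cA(\bx^*)$ we can bound $\|A_{\cA(\bx^*)}^T \Delta\bslambda_{\cA(\bx^*)}\|$ by adding the (already controlled) contribution of rows outside $\cA(\bx^*)$ inside $\cI^c$; since $A_{\cA(\bx^*)}$ has full row rank, $(A_{\cA(\bx^*)})^T$ is injective and bounded below on its domain, so $\|\Delta\bslambda_{\cA(\bx^*)}\| \le c_3 E(\bx,\bslambda)$, and in particular $\|\Delta\bslambda_\cI\|$ is controlled.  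Adding the $\cI$ and $\cI^c$ bounds gives $\|\Delta\bslambda\| \le c_4 E(\bx,\bslambda)$.  Combining with Step 1 and renaming constants produces the claimed inequality with $c := 1/(c_1 + c_4)$ (or similar), on the neighborhood $V$ obtained as the intersection of the finitely many shrinkings above.

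\medskip\noindent
The main obstacle is the bookkeeping around the index sets when strict complementarity is absent: $\cA(\bx^*)$ and $\cI$ need not coincide (there can be indices $i$ with $\lambda_i^* = 0$ and $s_i^* = 0$ simultaneously), and for such ``degenerate'' indices neither $|s_i|$ nor $|\lambda_i|$ is automatically the minimum, so $w_i$ could be either.  The fix is to observe that for such $i$ both $s_i$ and $\lambda_i$ are small perturbations of $0$, so $w_i = \min\{|s_i|,|\lambda_i|\}$ still bounds whichever of the two we need in each step—specifically $|\ba_i^T\Delta\bx|$ is not directly controlled there, but these rows of $A_{\cA(\bx^*)}$ are only needed through the injectivity of $(A_{\cA(\bx^*)})^T$, i.e.\ on the dual side, where $|\lambda_i| = |\Delta\lambda_i|$ and $w_i \le |\lambda_i|$ suffices.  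Making sure each rank hypothesis is invoked on the side (primal $\Delta\bx$ vs.\ dual $\Delta\bslambda$) where the corresponding residual actually delivers the needed coordinates is the crux; once the indices are sorted, everything reduces to equivalence of norms and the elementary fact that an injective linear map is bounded below.
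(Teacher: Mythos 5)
There is a genuine gap, and it sits exactly where you flag the difficulty: at the degenerate indices $i\in\cA(\bx^*)\setminus\cI$, where $s_i^*=\lambda_i^*=0$. In Step 1 you assert that, after shrinking $V$, $w_i=|\lambda_i|$ for all $i\in\bm\setminus\cI$, hence $\|\bslambda_{\bm\setminus\cI}-\bslambda^*_{\bm\setminus\cI}\|\leq\|\bw\|$. This is correct for indices with $s_i^*>0$, but fails at degenerate indices: there both $s_i$ and $\lambda_i$ are small, and $w_i=\min\{|s_i|,|\lambda_i|\}$ may equal $|s_i|$, which can be arbitrarily smaller than $|\lambda_i-\lambda_i^*|$. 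Concretely, perturb only $\lambda_i$ at such an index (keep $\bx=\bx^*$ and all other multipliers at their optimal values): then $\bw=\bzero$ while $\lambda_i-\lambda_i^*\neq0$, and the lemma's conclusion is saved only because $\bv=-(\lambda_i-\lambda_i^*)\ba_i\neq\bzero$ (with $\ba_i\neq\bzero$ by hypothesis (i)) picks up the slack. Once $(A_{\bm\setminus\cI})^T(\bslambda_{\bm\setminus\cI}-\bslambda^*_{\bm\setminus\cI})$ is not controlled by $\|\bw\|$, you cannot isolate $H(\bx-\bx^*)-(A_\cI)^T(\bslambda_\cI-\bslambda_\cI^*)$ from the residual $\bv$, so the bound $\|\bx-\bx^*\|\leq c_1E$ of Step 1 is not established, and Step 2, which presupposes it, collapses with it. The patch in your closing paragraph does not repair this: it concerns only the dual-side use of $A_{\cA(\bx^*)}$ in Step 2, and in any case argues in the wrong direction---$w_i\leq|\lambda_i|$ is an upper bound on $w_i$, whereas what you need is an upper bound on $|\lambda_i-\lambda_i^*|$ in terms of quantities already controlled.

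More structurally, at a degenerate index the information carried by $w_i$ is a disjunction (\emph{either} $|\ba_i^T(\bx-\bx^*)|$ \emph{or} $|\lambda_i-\lambda_i^*|$ is small), and a chain of purely linear ``injective map is bounded below'' estimates that ignores which branch is active cannot simultaneously control the primal and dual perturbations; some use of curvature and sign structure is unavoidable. This is precisely how the paper's proof proceeds: it shows $\Psi(\bszeta):=E(\bz^*+\bszeta)$ is absolutely homogeneous near the origin and strictly positive for $\bszeta\neq\bzero$, the positivity step exploiting $H\succeq\bzero$ (so that $\bv=\bzero$ forces $(\bszeta^s)^T\bszeta^\lambda>0$) together with the observation that when $\bw=\bzero$ this inner product reduces to a sum over the degenerate indices only, yielding the contradiction---ingredients absent from your argument. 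Under strict complementarity ($\cA(\bx^*)=\cI$, no degenerate indices) your two-step linear estimate would essentially go through, but the lemma is stated, and later used, precisely without that assumption.
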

\begin{proof}
Let $\bz^*:=(\bx^*,\bslambda^*)\in\bbR^{n+m}$,
let $\bs:=A\bx-\bb$, $\bs^*:=A\bx^*-\bb$, and
let $\Psi\colon\bbR^{n+m}\to\bbR$ be given by
$\Psi(\bszeta):=E(\bz^*+\bszeta)$.
We show that, restricted to an appropriate punctured convex neighborhood
of the origin, $\Psi$ is strictly positive and absolutely homogeneous,
so that the convex hull $\hat\Psi$ of such restriction generates a norm 
on $\bbR^{n+m}$,
proving the claim (with $c=1$ for the norm generated by $\hat\Psi$).  
To proceed, let 
$\bszeta^x\in\bbR^n$ and $\bszeta^\lambda\in\bbR^m$ denote
respectively the first $n$ and last $m$ components of $\bszeta$,
and let $\bszeta^s:=A\bszeta^x$.

First, since $\bv(\bz^*)=\bzero$,
$\bv(\bz^*+\bszeta) = H\bszeta^x-A^T\bszeta^\lambda$ is linear in $\bszeta$, 
making its norm absolutely homogeneous in $\bszeta$; and since 
$s_i=s_i^*+\zeta^s_i$ and $\lambda_i=\lambda_i^*+\zeta^\lambda_i$,
complementarity slackness ($s_i^*\lambda_i^*=0$) implies that
$\|\bw(\bx^*+\bszeta^x,\bslambda^*+\bszeta^\lambda)\|$ is absolutely homogeneous in 
$\bszeta$ as well, 
in some neighborhood $V_1$ of the origin.  Hence $\Psi$ is indeed absolutely 
homogeneous in $V_1$.  

Next, turning to strict positiveness and proceeding by contradiction, 
suppose that for every $\delta>0$ there exists $\bszeta\not=\bzero$, 
with $\|\bszeta\|<\delta$, such that $\Psi(\bszeta)=0$, i.e.,
$\bv(\bz^*+\bszeta)=\bzero$ and $\bw(\bz^*+\bszeta)=\bzero$.  
In view of (i), which implies uniqueness (over all of $\bbR^m$) of 
the KKT multiplier vector associated to $\bx^*$, 
and given that $\bszeta\not=\bzero$,
we must have $\bszeta^x\not=\bzero$.
In view of (ii), this implies that 
$H\bszeta^x$ and $\bszeta^s_\cI=A_\cI\bszeta^x$ cannot
vanish concurrently.  On the other hand, for $i\in \cI$ and
for small enough $\delta$, $w_i(\bz^*+\bszeta)=0$ 
implies $\zeta^s_i=0$.  Hence, $H\bszeta^x$ cannot vanish, and it
must hold that $(\bszeta^x)^T H\bszeta^x>0$.  Since
$\bv(\bz^*+\bszeta)=H\bszeta^x-A^T\bszeta^\lambda$, we conclude
from $\bv(\bz^*+\bszeta)=\bzero$
that $(\bszeta^x)^T A^T\bszeta^\lambda>0$, i.e.,
$(\bszeta^s)^T\bszeta^\lambda>0$.
Now, the argument that shows that $\zeta^s_i=0$ when $\lambda^*_i>0$
also shows that $\zeta^\lambda_i=0$ when $s^*_i>0$.  Hence our
inequality reduces to 
$\sum_{\{i:s^*_i=\lambda^*_i=0\}}\zeta^s_i\zeta^\lambda_i>0$,
in contradiction with $\bw(\bz^*+\bszeta)=\bzero$.
Taking $V$ to be a convex neighborhood of the origin contained in 
$V_1\cap \{\bszeta:\|\bszeta\|<\delta\}$ completes the proof.
\end{proof}

\begin{proposition}
Algorithm~\ref{algo:CR-MPC} with {\rulename} satisfies Condition~\ref{cond:working_set_GC}.
\label{prop:rule_cond_GC}
\end{proposition}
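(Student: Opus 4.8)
The plan is to study the threshold sequence $\{\delta_k\}$ generated by {\rulename} and to show it shrinks slowly enough never to permanently exclude a constraint active at a limit point of $\{\bx^k\}$ (part~(i)) or at the solution $\bx^*$ (part~(ii)). The key preliminary is that $\{\delta_k\}$ is nonincreasing and is multiplied by $\theta$ exactly at the iterations where $E_k\le\beta E_{\min}$, at which point $E_{\min}$ is reset to $E_k$. Letting $N_k$ be the number of such \emph{reductions} through iteration $k$, one has $\delta_k=\bar\delta\,\theta^{N_k}$ and, since each reduction shrinks $E_{\min}$ by at least the factor $\beta$, $E_{\min,k}\le\beta^{N_k}E_0$ (with $E_0$ the initial error), whence $N_k\le\log(E_0/E_{\min,k})/\log(1/\beta)$ and
\[
\delta_k\ \ge\ \bar\delta\,(E_{\min,k}/E_0)^{p},\qquad p:=\log(1/\theta)/\log(1/\beta)\in(0,1),
\]
the membership $p<1$ being exactly what the requirement $\beta<\theta$ provides. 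I would also note the complementary elementary fact that between consecutive reductions $E_j>\beta E_{\min,j}$, so $E_{\min,k}$ is never much smaller than the current error.

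For part~(i), assume $\{(\bx^k,\bslambda^k)\}$ is bounded away from $\cF^*$. First I would show $\delta_k$ is bounded away from $0$: if not, then $\delta_k\to0$, so reductions occur infinitely often and $E_{\min,k}\to0$, hence $E_{k_j}\to0$ along a subsequence; since $\{\bslambda^k\}$ is bounded (by $\lambda^{\max}$, cf.~\eqref{eq:update_lambda_Q}--\eqref{eq:update_lambda_notQ}) and $\{\bx^k\}$ is bounded (monotone decrease of $f$ along the iterates---cf.\ Proposition~\ref{proposition:desc_mpc} and convexity---confines $\{\bx^k\}$ to a bounded level set under the standing nonemptiness/boundedness of $\cF_P^*$), a further subsequence converges to some $(\tilde\bx,\tilde\bslambda)$ with $\tilde\bx\in\cF_P$, $\tilde\bslambda\ge\bzero$, and $E(\tilde\bx,\tilde\bslambda)=0$; by the characterization of $E$ recorded after~\eqref{eq:v,w}, $(\tilde\bx,\tilde\bslambda)\in\cF^*$, a contradiction. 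So $\delta_k\equiv\delta_\infty>0$ for all large $k$. Then if $\bx^k\to\bx'$ on $K$ and $i\in\cA(\bx')$, we have $s_i^k=\ba_i^T(\bx^k-\bx')\to0$ on $K$, so $s_i^k<\delta_\infty\le\delta_k$ for all large $k\in K$, i.e.\ $i\in Q_k$; hence $\cA(\bx')\subseteq Q_k$ eventually on $K$.

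For part~(ii), assume $(\bx^*,\bslambda^*)$ is the unique solution, strict complementarity holds at $\bx^*$, and $\bx^k\to\bx^*$. The hypotheses of Lemma~\ref{lem:E>c||z-z*||} hold here: uniqueness of $\bslambda^*$ makes $A_{\cA(\bx^*)}$ have full row rank, while uniqueness of $\bx^*$ with strict complementarity forces $\bd^TH\bd>0$ for every nonzero $\bd$ with $A_{\cA(\bx^*)}\bd=\bzero$ (else $\bx^*\pm t\bd$ would be a distinct minimizer for small $t>0$), which makes $[H\ (A_{\cI})^T]$, with $\cI=\{i:\lambda^*_i>0\}=\cA(\bx^*)$, have full row rank. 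Since moreover $E$ is Lipschitz and vanishes at $(\bx^*,\bslambda^*)$, Lemma~\ref{lem:E>c||z-z*||} yields constants $0<c\le L$ and a neighborhood $U$ of $(\bx^*,\bslambda^*)$ on which $c\|(\bx-\bx^*,\bslambda-\bslambda^*)\|\le E(\bx,\bslambda)\le L\|(\bx-\bx^*,\bslambda-\bslambda^*)\|$. If $\delta_k$ is bounded away from $0$, conclude as in part~(i) from $s_i^k\to0$ for $i\in\cA(\bx^*)$. Otherwise $\delta_k\to0$, $E_{\min,k}\to0$, and---combining this with $\bx^k\to\bx^*$, boundedness of $\{\bslambda^k\}$, uniqueness, the characterization of $E$, and the dual update rules of Algorithm~\ref{algo:CR-MPC}---one gets $(\bx^k,\bslambda^k)\to(\bx^*,\bslambda^*)$ and $E_k\to0$. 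Once the iterates lie in $U$, for $i\in\cA(\bx^*)$,
\[
s_i^k=\ba_i^T(\bx^k-\bx^*)\le\|\ba_i\|\,\|(\bx^k-\bx^*,\bslambda^k-\bslambda^*)\|\le(\|\ba_i\|/c)\,E_k,
\]
and matching this against $\delta_k\ge\bar\delta(E_{\min,k}/E_0)^p$ (using $p<1$, $E_k\to0$, and the comparability of $E_k$ with $E_{\min,k}$) gives $s_i^k\le\delta_k$ for all large $k$, so $\cA(\bx^*)\subseteq Q_k$ for all large $k$.

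I expect the main obstacle to be the bookkeeping inside part~(ii): converting the \emph{primal} convergence $\bx^k\to\bx^*$ into control of $E_k$ (equivalently of $\bslambda^k$) uniformly over all large $k$---not merely along the reduction iterations---and then quantitatively matching the at-worst-geometric decay of $\delta_k$ against the decay of the active-constraint slacks. The first piece must use the dual updates (the assignment $\lambda_i^+\approx\mu_{(Q)}^+/s_i^+$ for $i\in\Qc$, the centering parameter $\sigma$, and the clamping of $\lambda_i^+$ to $[\min\{\chi,\underline\lambda\},\lambda^{\max}]$) to drive $\bslambda^k$ to $\bslambda^*$; the second piece is exactly where $\beta<\theta$ (hence $p<1$) is indispensable, since a reduction of $\delta_k$ is ``paid for'' by a full factor-$\beta$ decrease of the running error, whereas the active slacks only need to be controlled at the rate of $E_k$ via Lemma~\ref{lem:E>c||z-z*||}. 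A secondary point is the appeal in part~(i) to boundedness of $\{\bx^k\}$, which I would justify from the monotone decrease of $f$ together with the standing nonemptiness and boundedness of $\cF_P^*$.
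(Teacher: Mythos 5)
Your part~(i) is correct and follows the paper's argument (you additionally supply the subsequential-limit justification that $\{E_k\}$ is bounded away from zero, which the paper leaves implicit), and your verification of the hypotheses of Lemma~\ref{lem:E>c||z-z*||} in part~(ii) is a reasonable elaboration. The gap is in part~(ii), in the case $\delta_k\to0$, at exactly the point you flag as ``the main obstacle'': extending the inclusion $\cA(\bx^*)\subseteq Q_k$ from the reduction iterations to \emph{all} large $k$. Your bound $\delta_k\ge\bar\delta(E_{\min,k}/E_0)^p$ with $p<1$ controls $E_k/\delta_k$ only at reduction iterations, where $E_k\le\beta E_{\min,k-1}$. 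Between reductions the rule provides only the \emph{lower} bound $E_k>\beta E_{\min,k}$; there is no upper bound on $E_k$ in terms of $E_{\min,k}$ or $\delta_k$, so ``the comparability of $E_k$ with $E_{\min,k}$'' points in the wrong direction. Moreover, your claim that the dual updates force $(\bx^k,\bslambda^k)\to(\bx^*,\bslambda^*)$, hence $E_k\to0$, on the whole sequence is unsubstantiated: in the paper that statement is Lemma~\ref{lemma:lambda_convergence}, whose proof already assumes the selection rule satisfies Condition~\ref{cond:working_set_GC}, so invoking it here would be circular. Even granting $E_k\to0$ on the whole sequence, you would still need $E_k=O(\delta_k)=O(\theta^{N_k})$, i.e.\ a rate matched to the geometric decay of $\delta_k$, not mere convergence.

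The paper closes this gap with a bootstrapping argument based on Theorem~\ref{thm:q-quad}, the one-step local q-quadratic estimate, which is proved independently of the selection rule and applies whenever the current iterate lies within $\rho$ of $\bz^*$ and $Q\supseteq\cA(\bx^*)$. Choosing $\rho$ so that $C\rho\le\theta$, once the inclusion and the bound $\|A\|\,\|(\bx^{k}-\bx^*,\bslambda^{k}-\bslambda^*)\|\le\delta_k$ hold at a single reduction iteration $k$ close enough to the solution, the quadratic estimate yields $\|(\bx^{k+1}-\bx^*,\bslambda^{k+1}-\bslambda^*)\|\le\theta\,\|(\bx^{k}-\bx^*,\bslambda^{k}-\bslambda^*)\|$, which keeps pace with the worst-case per-iteration shrinkage $\delta_{k+1}\ge\theta\delta_k$; the inclusion and the bound then propagate by induction to all subsequent iterations. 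Some ingredient of this kind --- or an equivalent uniform bound on $E_k/\delta_k$ off the reduction set --- is indispensable, and your proposal does not contain one.
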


\begin{proof}
To prove that Condition~\ref{cond:working_set_GC}(i) holds, 
let $\{(\bx^k,\bslambda^k)\}$ be bounded away from $\cF^*$,
let $\bx^\prime$ be a limit point of $\{\bx^k\}$, and let
$K$ be an infinite subsequence such that $\{\bx^k\}\to\bx^\prime$ on $K$.
By \eqref{eq:Exlambda}--\eqref{eq:v,w}, $\{E_k\}$ is bounded away from zero
so that, under \rulename, 
there exists $\delta^\prime>0$ such that 
$\delta_k>\delta^\prime$ for all $k$.
Now, with $\bs^\prime:=A\bx^\prime-\bb$ and
$\bs^k:=A\bx^k-\bb$ for all $k$,
since $\bs^\prime_{\cA(\bx^\prime)}=\bzero$,
we have that, for all $i\in\cA(\bx^\prime)$, 
$s_i^k<\delta^\prime$ for all large enough $k\in K$.
Hence, for all large enough $k\in K$,
\begin{equation*}
s_i^k<\delta^\prime<\delta_k\:,\quad \forall i\in\cA(\bx^\prime)\:.
\end{equation*}
Since {\rulename} chooses the working set 
$Q_k:=\{i\in\bm \,|\, s^k_i\leq\delta_k\}$ for all $k$, we
conclude that $\cA(\bx^\prime)\subseteq Q_k$ for all large enough $k\in K$,
which proves Claim~(i).

Turning now to Condition~\ref{cond:working_set_GC}~(ii), 
suppose that~\eqref{eq:primal_cqp}--\eqref{eq:dual_cqp} has a unique
solution $(\bx^*,\bslambda^*)$, that strict complementarity holds
at $\bx^*$, and that $\{\bx^k\}\to\bx^*$.  If $\delta_k$ is reduced no more
than finitely many times, then of course it is bounded away from zero,
and the proof concludes as for Condition~\ref{cond:working_set_GC}(i); 
thus suppose $\{\delta_k\}\to0$.
Let $K:=\{k\geq1:\delta_k=\theta\delta_{k-1}\}$ (an infinite index set)
and, for given $k$,
let $\ell(k)$ be the cardinality of $\{k^\prime\leq k:k^\prime\in K\}$.
Then we have $\delta_k = \bar\delta\theta^{\ell(k)}$ for all $k$ and
$E_k \leq\beta^{\ell(k)}E_0$ for all $k\in K$.
Since $\beta<\theta$ (see~{\rulename}), this implies that~
$\{\frac{E_k}{\delta_k}\}_{k\in K}\to0$.
And from the definition of $E_k$
and uniqueness of the solution to~\eqref{eq:primal_cqp}--\eqref{eq:dual_cqp},
it follows that $\{\bslambda^k\}\to\bslambda^*$ as $k\to\infty$, $k\in K$.
We use these two facts to prove that, for all $i\in\cA(\bx^*)$ and some $k_0$,
\begin{equation}
s_i^k\leq\delta_k \quad\forall k\geq k_0;
\label{eq:s<delta}
\end{equation}
in view of {\rulename}, this will complete the proof of Claim (ii).
From Lemma~\ref{lem:E>c||z-z*||}, there exist
$C>0$ and $\eta>0$ such that
\[
\|(\bx-\bx^*,\bspsi-\bslambda^*)\| \leq C E(\bx,\bspsi)
\]
for all $(\bx,\bspsi)$ satisfying $\|(\bx-\bx^*,\bspsi-\bslambda^*)\|<\eta$.
Since $\{\frac{E_k}{\delta_k}\}_{k\in K}\to0$ and 
$\{\bslambda^k-\bslambda^*\}_{k\in K}\to\bzero$, and
since $\|\bs^k-\bs^*\|\leq\|A\|\|\bx^k-\bx^*\|$
(since $\bs^k-\bs^*=A(\bx^k-\bx^*)$), there exists $k_0$
such that, for all $i\in\cA(\bx^*)$,
\begin{equation}
\label{eq:s<deltaOnK}
s_i^k \leq \|A\|\|(\bx^k-\bx^*,\bslambda^k-\bslambda^*)\| 
\leq \|A\| C E_k \leq \delta_k, \quad\forall k\in K,~k\geq k_0,
\end{equation}
establishing~\eqref{eq:s<delta} for $k\in K$.  It remains to show 
that~\eqref{eq:s<delta} does hold for {\em all} $k$ large enough.
Let $\rho$ and $C$ be as in Theorem~\ref{thm:q-quad} and without loss of
generality suppose $C\rho\leq\theta$.
Since $\{(\bx^{k}-\bx^*,\bslambda^{k}-\bslambda^*)\}_{k\in K}\to\bzero$,
there exists $k\in K$ (w.l.o.g. $k\geq k_0$), such that 
$\|(\bx^{k}-\bx^*,\bslambda^{k}-\bslambda^*)\|<\rho$.
Theorem~\ref{thm:q-quad} together with~\eqref{eq:s<deltaOnK} then 
imply that
\[
\|(\bx^{k+1}-\bx^*,{\bslambda}^{k+1}-{\bslambda}^*)\|
\leq C  \|(\bx^{k}-\bx^*,{\bslambda}^{k}-{\bslambda}^*)\|^2
<\theta \|(\bx^{k}-\bx^*,{\bslambda}^{k}-{\bslambda}^*)\|
\leq\theta\delta_k/\|A\|.
\]
(When $A=\bzero$, Proposition~\ref{prop:rule_cond_GC} holds trivially.)
Hence, $\|(\bx^{k+1}-\bx^*,{\bslambda}^{k+1}-{\bslambda}^*)\|<\rho$ 
and since (in view of {\rulename}) $\delta_{k+1}$ is equal 
to either $\delta_k$ or $\theta\delta_k$ and $\theta\in(0,1)$, 
we get, for all $i\in\cA(\bx^*)$,
\[
s_i^{k+1} \leq \|A\| \|(\bx^{k+1}-\bx^*,\bslambda^{k+1}-\bslambda^*)\|
\leq \delta_{k+1},   
\]
so that $\cA(\bx^*)\subseteq Q_{k+1}$.  Theorem~\ref{thm:q-quad} can be
applied recursively, yielding $\cA(\bx^*)\subseteq Q_k$ for all $k$
large enough, concluding the proof of Claim (ii).
\end{proof}

Note that if a constraint-selection rule satisfies
Condition~\ref{cond:working_set_GC}, rules derived from it
by replacing $Q_k$ by a superset of it also satisfy 
Condition~\ref{cond:working_set_GC}
so that our convergence results still hold.  Such augmentation
of $Q_k$ is often helpful; e.g., see Section~5.3 in~\cite{WNTO-2012}.
Note however that the
following corollary to Theorems~\ref{thm:convergence}--\ref{thm:q-quad}
and Proposition~\ref{prop:rule_cond_GC}, proved in 
Appendix~\ref{appendix:GC}, of course does {\em not} apply 
when {\rulename} is thus augmented.

\begin{corollary}
Suppose that {\rulename} is used in Step~2 of Algorithm~\ref{algo:CR-MPC},
$\varepsilon=0$,
and that Assumptions~\ref{assum:non_empty_bdd_sol}--\ref{assum:singleton_sol+strict_complementary} hold.
Let $(\bx^*, \bslambda^*)$ be the unique primal--dual solution.
Further suppose that  $\lambda_i^*<\lambda^{\max}$ for all $i\in\bm$.
Then, for sufficiently large $k$, {\rulename} gives $Q_k=\cA(\bx^*)$.%
\footnote{In particular, if $\bx^*$ is an unconstrained minimizer,
the working set $Q$ is eventually empty, and Algorithm~\ref{algo:CR-MPC}
reverts to a simple regularized Newton method (and terminates in
one additional iteration if $H\succ\bzero$ and $R=\bzero$).}
\label{cor:exact_working_set}
\end{corollary}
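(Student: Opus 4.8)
The plan is to combine two facts established earlier: by Corollary~\ref{cor:q-quadratic}, under the stated assumptions the sequence $\{(\bx^k,\bslambda^k)\}$ converges to the unique primal--dual solution $(\bx^*,\bslambda^*)$ (in fact q-quadratically), and by Theorem~\ref{thm:convergence}(ii) (since $\lambda^*_i<\lambda^{\max}$ for all $i$) we actually have $\{\bslambda^k\}\to\bslambda^*$; and by Proposition~\ref{prop:rule_cond_GC}, {\rulename} satisfies Condition~\ref{cond:working_set_GC}, so in particular $\cA(\bx^*)\subseteq Q_k$ for all large $k$. Thus the content of the corollary is the \emph{reverse} inclusion $Q_k\subseteq\cA(\bx^*)$ for all large $k$, i.e., that no inactive constraint is ever erroneously included once $k$ is large. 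Since {\rulename} sets $Q_k=\{i\in\bm : s^k_i\le\delta_k\}$, this amounts to showing that for every $i\notin\cA(\bx^*)$ (so $s^*_i>0$) one has $s^k_i>\delta_k$ for all large $k$.

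The key quantitative ingredient is that $\{\delta_k\}\to 0$. First I would argue this must hold: if $\{\delta_k\}$ were bounded away from zero, then by {\rulename} it would be reduced only finitely often, which by the update rule ($\delta_k=\theta\delta_{k-1}$ precisely when $E_k\le\beta E_{\min}$) would force $E_k$ to be bounded away from zero, contradicting $E_k=E(\bx^k,\bslambda^k)\to E(\bx^*,\bslambda^*)=0$ (the last equality because $(\bx^*,\bslambda^*)$ solves (P)--(D) and $E$ vanishes exactly there, by the remark following~\eqref{eq:v,w}). Hence $\{\delta_k\}\to 0$.

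With $\{\delta_k\}\to 0$ in hand, the rest is immediate: since $\bx^k\to\bx^*$ we have $\bs^k=A\bx^k-\bb\to\bs^*=A\bx^*-\bb$, so for each fixed $i\notin\cA(\bx^*)$, $s^k_i\to s^*_i>0$; choosing $k$ large enough that $s^k_i>\tfrac12 s^*_i$ and $\delta_k<\tfrac12\min_{j\notin\cA(\bx^*)} s^*_j$ gives $s^k_i>\delta_k$, hence $i\notin Q_k$. Since there are only finitely many indices, one uniform threshold works, giving $Q_k\cap\cA(\bx^*)^c=\emptyset$, i.e., $Q_k\subseteq\cA(\bx^*)$, for all large $k$. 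Combining with $\cA(\bx^*)\subseteq Q_k$ from Proposition~\ref{prop:rule_cond_GC} (via Condition~\ref{cond:working_set_GC}(ii), whose hypotheses—unique solution and strict complementarity—are exactly Assumption~\ref{assum:singleton_sol+strict_complementary}) yields $Q_k=\cA(\bx^*)$ for all sufficiently large $k$.

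The footnote's parenthetical claim (termination in one more iteration when $\bx^*$ is unconstrained, $H\succ\bzero$, $R=\bzero$) would be handled separately but is essentially trivial: eventually $Q=\emptyset$, so $W=H$ (as $R=\bzero$ forces $W=H$ regardless of $\varrho$), $M_{(Q)}=H$, and $\Dbxa=-H^{-1}\nabla f(\bx)$ is the exact Newton step to the unconstrained minimizer, which with the corrector vanishing (empty $Q$) and full step sizes lands exactly at $\bx^*$; the stopping test then triggers. I do not expect any real obstacle here—the only point requiring a moment's care is the justification that $\{\delta_k\}\to 0$ (ruling out the ``finitely many reductions'' alternative), and that the finite cardinality of $\bm$ lets one pass from a per-index statement to a uniform one; both are routine once $\bx^k\to\bx^*$ and $E_k\to 0$ are known from the cited results.
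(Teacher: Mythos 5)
Your proposal is correct and follows essentially the same route as the paper's proof: invoke Theorem~\ref{thm:convergence} (with $\lambda^*_i<\lambda^{\max}$) to get $(\bx^k,\bslambda^k)\to(\bx^*,\bslambda^*)$, hence $E_k\to0$, use Proposition~\ref{prop:rule_cond_GC}/Condition~\ref{cond:working_set_GC}(ii) for $\cA(\bx^*)\subseteq Q_k$, and deduce from the rule's update that $\delta_k\to0$ so that inactive constraints (whose slacks converge to positive limits) are eventually excluded. Your extra invocation of Corollary~\ref{cor:q-quadratic} is harmless but unnecessary, and your spelled-out argument that $\delta_k\to0$ is just a more detailed version of the paper's one-line appeal to {\rulename}.
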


\section{Numerical Experiments}
\label{sec:opt_num_results}

We report computational results obtained with
Algorithm~\ref{algo:CR-MPC} on randomly generated problems and 
on data-fitting problems of various sizes.%
\footnote{In addition, a preliminary version of the proposed 
algorithm (with a modified version of {\ruleJOT}, see \cite{Laiu-Thesis} for details) 
was successfully tested in \cite{LHMOT-2015} on CQPs arising from a 
positivity-preserving numerical scheme for solving linear kinetic transport equations.}
Comparisons are made across different constraint-selection 
rules, including the unreduced case ($Q=\bm$).%
\footnote{We also ran 
comparison tests with the constraint-reduced
algorithm of~\cite{Park2016}, for which polynomial complexity 
was established (as was superlinear convergence)
for general semi-definite optimization problems.  
As was expected, that algorithm could not
compete (orders of magnitude slower) with algorithms specifically
targeting CQP.}

\subsection{Other Constraint-Selection Rules}
As noted, the convergence properties of Algorithm~\ref{algo:CR-MPC}
that are given in Section~\ref{subsec:conv_ana} hold with any working-set
selection rule that satisfies Condition~\ref{cond:working_set_GC}.
The rules used in the numerical tests are our {\rulename},
{\ruleJOT} of~\cite{JOT-12}, {\ruleFFK} of~{\cite{FFK98,CWH06},
and {\ruleALL} ($Q=\bm$, i.e., no reduction). 
The details of {\ruleJOT} and {\ruleFFK} are stated below.\\
\renewcommand{\thealgorithm}{}
\begin{minipage}[t]{1\textwidth}
  \vspace{-0.6cm}  
\begin{algorithm}[H]
 \floatname{algorithm}{\ruleJOT}
\begin{algorithmic}[1]
 \Statex{\bf Parameters: }{$\kappa>0$, $q_U\in[n, m]$ (integer).}
 \Statex{\bf Input: }{Iteration: $k$, Slack variable: $\bs^k$,  
Duality measure: $\mu:=(\bslambda^k)^T \bs^k/m$.
}
 \Statex{\bf Output: }{Working set: $Q_k$.}
 \Statex{Set $q:=\min\{\max\{n, \lceil{\mu^\kappa m}\rceil\},q_U\}$, and let $\eta$ 
be the $q$-th smallest slack value.}
 \Statex Select $Q_k:=\{i\in\bm \,|\, s^k_i\leq \eta\}$.
 \caption{ }
\end{algorithmic}
\end{algorithm}
\end{minipage}\\~~
\renewcommand{\thealgorithm}{}
\begin{minipage}[t]{1\textwidth}
  \vspace{0pt} 
\begin{algorithm}[H]
 \floatname{algorithm}{\ruleFFK}
\begin{algorithmic}[1]
 \Statex{\bf Parameter: }{$0<r<1$.}
 \Statex{\bf Input: }{Iteration: $k$, Slack variable: $\bs^k$,  Error: $E_k:=E(\bx^k,\bslambda^k)$ (see \eqref{eq:Exlambda}).}
 \Statex{\bf Output: }{Working set: $Q_k$.}
 
 \Statex Select $Q_k:=\{i\in\bm \,|\, s^k_i\leq (E_k)^r\}$.
 \caption{ }
\end{algorithmic}
\end{algorithm}
\end{minipage}\\~~~~

\noindent\smallskip
Note that the thresholds in {\rulename} and {\ruleFFK} depend on both the duality measure $\mu$ and dual feasibility (see \eqref{eq:Exlambda}) and these two rules impose no restriction on $|Q|$.
On the other hand, the threshold in {\ruleJOT} involves only $\mu$, while it is required that $|Q|\geq n$ .
In addition, it is readily verified that {\ruleFFK} satisfies 
Condition~\ref{cond:working_set_GC}, and that so does {\ruleJOT} under 
Assumption~\ref{assum:lin_ind_act}.

It is worth noting that {\rulename}, {\ruleFFK}, and {\ruleJOT} all select 
constraints by comparing the values of primal slack variables $s_i$ to 
some threshold values (independent of $i$), while the associated dual 
variables $\lambda_i$ 
are not taken into account individually.  Of course, variations with
respect to such choice are possible.
In fact, it was shown in \cite{FFK98} (also see an implementation 
in~\cite{CWH06}) that the 
{\em strongly} active constraint set $\{i\in\bm\colon \lambda_i^* > 0\}$ can be 
(asymptotically) identified at iteration~$k$ by the set $\{i\in\bm\colon \lambda_i^k \geq \delta_k\}$ with a properly chosen threshold~$\delta_k$.
Modifying the constraint selection rules considered in the present
paper to include such information 
might improve the efficiency of the rules, especially when 
the constraints are poorly scaled. 
(Such modification does not affect the convergence properties of 
Algorithm~\ref{algo:CR-MPC} as long as the modified rules still satisfy 
Condition~\ref{cond:working_set_GC}.)  
Numerical tests were carried out with an ``augmented''
{\rulename} that also includes $\{i\in\bm\colon \lambda_i^k \geq \delta_k\}$
(with the same $\delta_k$ as in the original {\rulename}).
The results suggest that,
on the class of imbalanced problems considered in this section, while 
introducing some overhead, such augmentation (with the same $\delta_k$)
brings no benefit.

\subsection{Implementation Details}
\label{subsec:implementation}
All numerical tests were run with a Matlab implementation of 
Algorithm~\ref{algo:CR-MPC} on a machine with Intel(R) Core(TM) i5-4200 CPU(3.1GHz), 4GB RAM, Windows 7 Enterprise, and Matlab 7.12.0(R2011a).
In the implementation, $E(\bx,\bslambda)$ (see 
\eqref{eq:Exlambda}--\eqref{eq:v,w}) is normalized via division by 
the factor of $\max\{\|{A}\|_\infty,\,\|H\|_\infty,\,\|\bc\|_\infty\}$,
and 2-norms are used in~\eqref{eq:Exlambda} 
and~Steps~9 and~10.
In addition, for scaling purposes 
(see, for example, \cite{JOT-12}), we used the normalized 
constraints $(DA)\bx\geq D\bb$, where $D=\diag{(1/\|\ba_i\|_2)}$.

To highlight the significance of constraint-selection rules, a dense direct 
Cholesky solver was used to solve normal equations \eqref{eq:normal_sys_CR_P_x} and \eqref{eq:normal_sys_CR_C}.
Following \cite{TAW-06} and \cite{JOT-12}, we 
set $s_i:=\max\{s_i,10^{-14}\}$ for all $i$ when computing 
$M_{(Q)}$ in \eqref{eq:CR_normat}.
Such safeguard prevents $M_{(Q)}$ from being too ill-conditioned and 
mitigates numerical difficulties in 
solving \eqref{eq:normal_sys_CR_P_x} and \eqref{eq:normal_sys_CR_C}.
When the Cholesky factorization 
of the modified $M_{(Q)}$ failed, we then doubled 
the regularization parameter $\varrho$ and recomputed $M_{(Q)}$ 
in~\eqref{eq:CR_normat}, and repeated this process
until $M_{(Q)}$ was successfully 
factored.%
\footnote{An alternative approach to take care of ill-conditioned 
$M_{(Q)}$ is to apply a variant of the Cholesky factorization that handles 
positive semi-definite matrices, such as the Cholesky-infinity factorization 
(i.e., \texttt{cholinc(X,{\textquotesingle}inf{\textquotesingle})} in Matlab)
or the diagonal pivoting strategy discussed in \cite[Chapter 11]{Wright-1997}.
Either implementation does not make notable difference in the numerical 
results reported in this paper, since the Cholesky factorization fails 
in fewer than $1\%$ of the tested problems.}

In the implementation, Algorithm~\ref{algo:CR-MPC} is set to terminate as
soon as either the stopping criterion \eqref{eq:error_term} is satisfied or 
the iteration count reaches $200$.
The algorithm parameter values used in the tests were
 $\varepsilon=10^{-8}$, $\tau=0.5$, $\omega=0.9$, $\varkappa=0.98$, 
$\nu=3$, $\lambda^{\max} =10^{30}$, $\underline{\lambda}=10^{-6}$, 
$R=I_{n\times n}$ (the $n\times n$ identity matrix), and 
$\bar{E}=E(\bx^0,\bslambda^0)$, 
as suggested in footnote~\ref{footnote:E}.
The parameters in {\rulename} were given values $\beta=0.4$, 
$\theta=0.5$, and $\bar{\delta}=$ 
the $2n$-th smallest initial slack value.
In {\ruleJOT}, $\kappa=0.25$ is used as in \cite{JOT-12},
and $q_U=m$ was selected (although $q_U=3n$ is suggested as a ``good
heuristic'' in~\cite{JOT-12}) to protect against a possible very
large number of active constraints at the solution;
the numerical results in~\cite{JOT-12} suggest 
that there is no significant downside in using $q_U=m$.
In {\ruleFFK}, $r=0.5$ is used as in \cite{FFK98,CWH06}.

\subsection{Randomly Generated Problems}
\label{subsec:random_problems}

We first applied Algorithm~\ref{algo:CR-MPC} on imbalanced ($m\gg n$)
randomly generated problems;
we used $m:=10\,000$ and $n$ ranging from $10$ to $500$.
Problems of the form~\eqref{eq:primal_cqp}
were generated in a similar way as 
those used in \cite{TAW-06,WNTO-2012,JOT-12}.
The entries of $A$ and $\bc$ were taken from a standard normal 
distribution $\cN(0,1)$,
those of $\bx^0$ and $\bs^0$ from uniform distributions $\cU(0,1)$ 
and $\cU(1,2)$, respectively, and we set $\bb:=A\bx^0-\bs^0$, which 
guarantees that 
$\bx^0$ is strictly feasible.
We considered two sub-classes of problems: (i) strongly convex, with
$H$ diagonal and positive, with random diagonal entries from $\cU(0,1)$,
and (ii) linear, with $H=\bzero$.
We solved $50$ randomly generated problems for each 
sub-class of $H$
and for each problem size, and report the results averaged over the $50$ 
problems. 
There was no instance of failure on these problems.
Figure~\ref{fig:Random} shows the results.  (We also ran tests with
$H$ rank-deficient but nonzero, with similar results.)

%
\begin{figure}[ht]
\centering
\captionsetup[subfigure]{labelformat=parens}

\subfloat[Strongly convex QP: $H\succ\bzero$]{
\captionsetup[subfigure]{labelformat=empty, position=top}
\subfloat[Iteration count]{\includegraphics[width=0.33\linewidth]{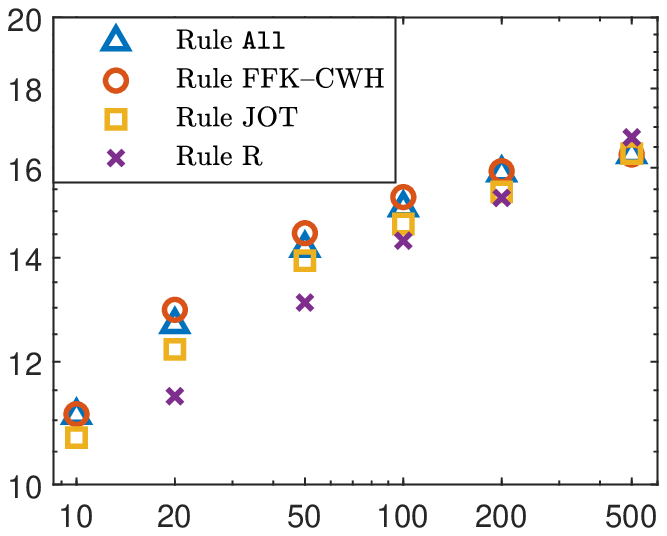}}~~~
\subfloat[Average $|Q|$]{\includegraphics[width=0.33\linewidth]{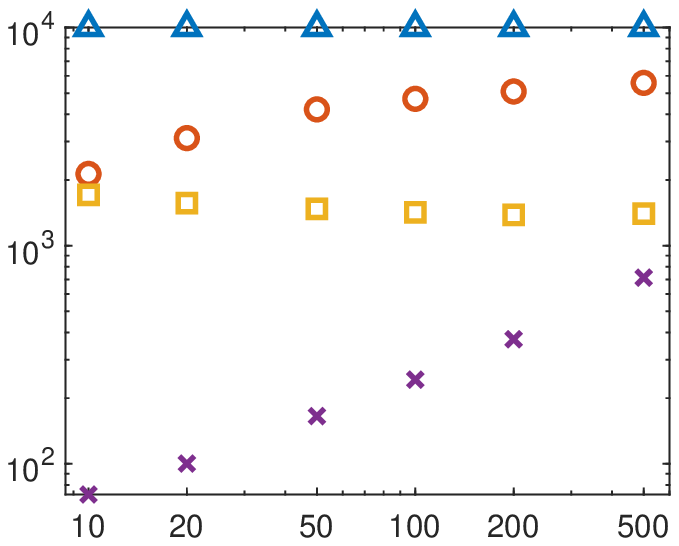}}~
\subfloat[Computation time (sec)]{\includegraphics[width=0.33\linewidth]{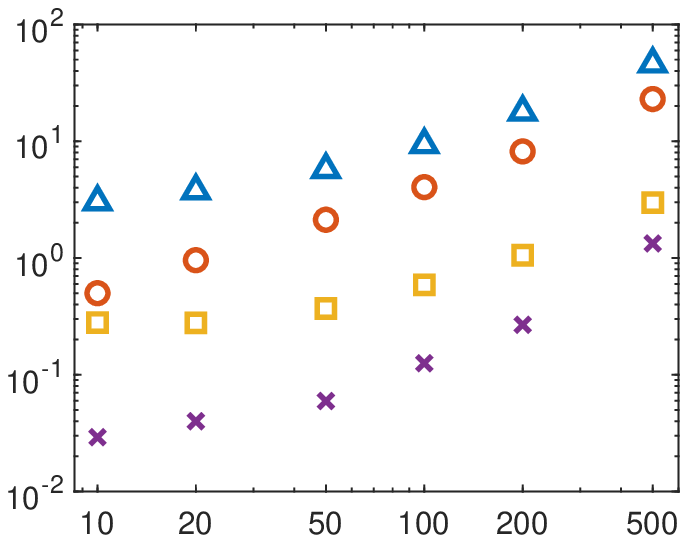}}
\setcounter{subfigure}{1}
\label{fig:PD}}


\subfloat[linear optimization: $H=\bzero$]{
\captionsetup[subfigure]{labelformat=empty, position=top}
\subfloat[Iteration count]{\includegraphics[width=0.33\linewidth]{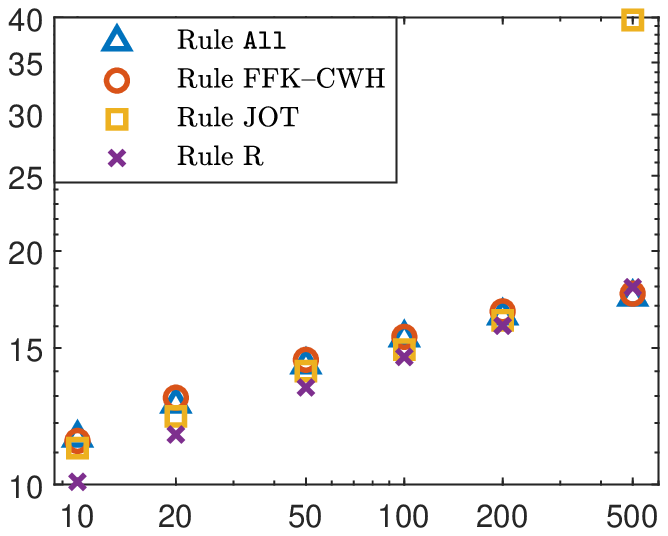}}~~~
\subfloat[Average $|Q|$]{\includegraphics[width=0.33\linewidth]{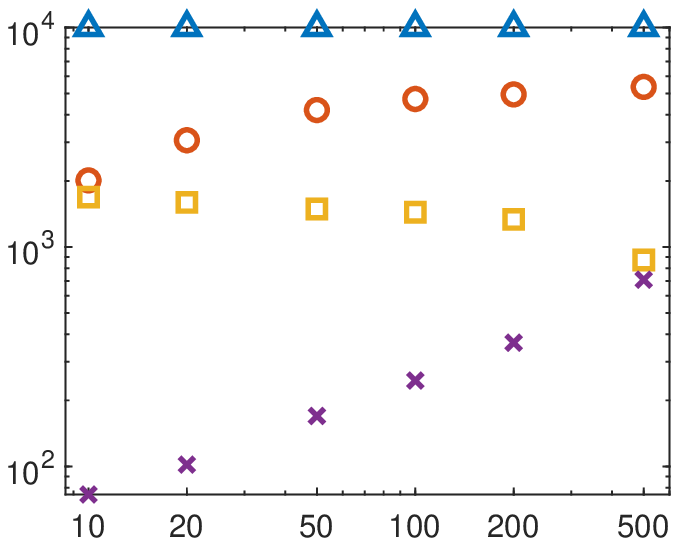}}~
\subfloat[Computation time (sec)]{\includegraphics[width=0.33\linewidth]{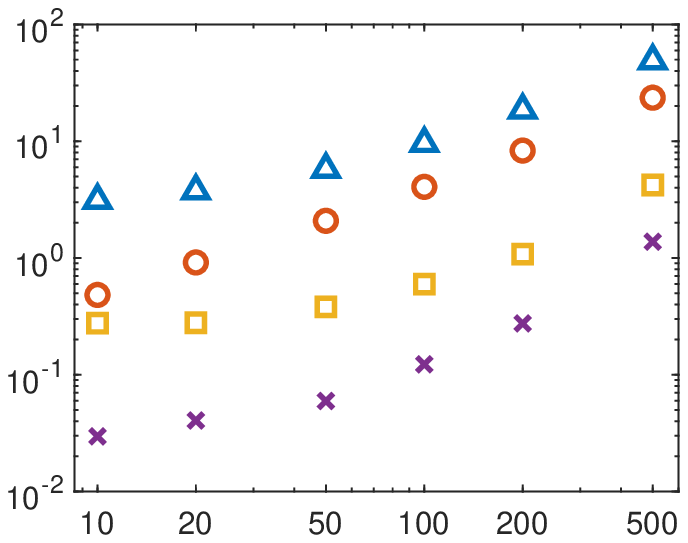}}
\setcounter{subfigure}{2}
\label{fig:LP}}
\caption{\small Randomly generated problems 
with $m=10\,000$ constraints-- Numerical results on 
two types of randomly generated problems. In each figure, the $x$-axis is the 
number of variables ($n$) and the $y$-axis is iteration count, average 
size of working set, or computation time,
all averaged over the 50 problem instances and plotted in logarithmic scale.
The results of {\ruleALL}, {\ruleFFK}, {\ruleJOT}, and {\rulename} are plotted 
as blue triangles, red circles, yellow squares, and purple crosses, respectively.}
\label{fig:Random}
\end{figure}

It is clear from the plots that, in terms of computation time, 
{\rulename} outperforms other constraint-selection rules for 
the randomly generated problems we tested.\footnote{Interestingly,
on strongly convex problems, most rules (and especially {\rulename})
need a smaller number of iterations than {\ruleALL} (except for
$n=500$)!}
When the number of variables ($n$) is $1\%\sim 5\%$ of the 
number of constraints ($m$) (i.e, $n$=100 to 500), 
Algorithm~\ref{algo:CR-MPC} with {\rulename} 
is two to five times faster than with the second best rule,
or 20 to 50 times faster than the unreduced 
algorithm ({\ruleALL}). 
When $n$ is lowered to less than $1\%$ of $m$ (i.e., $n<100$), 
the time advantage of using {\rulename} further doubles.
Note that, as $n$ decreases, {\rulename} is 
asymptotically more restrictive than {\ruleFFK} and {\ruleJOT}.
We believe this may be the key reason that {\rulename} outperforms
other rules, especially on problems with small $n$.
 
\subsection{Data-Fitting Problems}
\label{subsec:data_fitting}
We also applied Algorithm~\ref{algo:CR-MPC} on CQPs arising from two
instances of a data-fitting problem:
trigonometric curve fitting to noisy observed data points. 
This problem was formulated in \cite{TAW-06} as a linear
optimization problem, 
and then in~\cite{JOT-12} reformulated as a CQP by imposing a 
regularization term.
The CQP formulation of this problem, taken from~\cite{JOT-12}, is as follows.
Let $g:[0,1]\to\bbR$ be a given function of time, 
and let $\bar{\bb}:=[\bar{b}_1,\dots,\bar{b}_{\bar{m}}]^T
\in\bbR^{\bar m}$ 
be a vector that collects noisy observations of $g$ at 
sample time $t=t_1,\dots,t_{\bar{m}}$.
The problem aims at finding a trigonometric expansion $u(t)$ from 
the noisy data $\bar{\bb}$ that best approximates $g$.
Here $u(t):=\sum_{j=1}^{\bar{n}} \bar{x}_j\psi_j(t)$, with the trigonometric basis
$$
\psi_j(t):=
\begin{cases}
\cos(2(j-1)\pi t)\:, & j=1,\dots,\lceil\frac{\bar{n}}{2}\rceil\\
\sin(2(j-\lceil\frac{\bar{n}}{2}\rceil)\pi t)& j=\lceil\frac{\bar{n}}{2}\rceil+1,\dots,\bar{n}
\end{cases}\:.
$$
Equivalently, 
$u(t)=\bar{A}\bar{\bx}$, where $\bar{\bx}:=[\bar{x}_1,\dots,\bar{x}_{\bar{n}}]^T$ 
and $\bar{A}$ is a $\bar{m}\times\bar{n}$ matrix with entries $\bar{a}_{ij}=\psi_j(t_i)$.
Based on a regularized minimax approach, 
the problem is then formulated as
$$
\minimize_{\bar{\bx}\in\bbR^{\bar{n}}} \|\bar{A}\bar{\bx}-\bar{\bb}\|_{\infty} + \frac{1}{2}\bar\alpha\bar{\bx}^T \bar{H} \bar{\bx}\:,
$$
where $\bar{H}\succeq\bzero$ is a symmetric $\bar{n}\times\bar{n}$ matrix, $\bar{\alpha}$ is a regularization parameter, 
and $\bar{\bx}^T \bar{H} \bar{\bx}$ is a regularization term that 
helps resist
over-fitting. 
This problem can be rewritten as
$$
\begin{alignedat}{2}
   \minimize_{\bar{\bx}\in\bbR^{\bar{n}} v\in \bbR} \:& v+\frac{1}{2}\bar\alpha\bar{\bx}^T \bar{H} \bar{\bx}\\
   \mbox{subject to} \:    & \bar{A}\bar{\bx}-\bar{\bb}\geq -v\bone\:,\\
   						  -& \bar{A}\bar{\bx}+\bar{\bb}\geq -v\bone\:,
\end{alignedat}
$$
which is a CQP in the form of \eqref{eq:primal_cqp} with number of variables $n=\bar{n}+1$ and number of constraints $m=2\bar{m}$.

Following~\cite{JOT-12}, 
we tested Algorithm~\ref{algo:CR-MPC} on this problem with two target functions
$$
g(t) = \sin(10t)\cos(25t^2) \quand g(t) = \sin(5t^3)\cos^2(10t)\:.
$$
In each case, as in~\cite{JOT-12}, we sampled the data uniformly 
in time and set $\bar{b}_i:=g(\frac{i-1}{\bar{m}})+\epsilon_i $, 
where $\epsilon_i$ is an independent and identically distributed 
noise that takes values from $\cN(0,0.09)$,%
\footnote{We also ran the tests without noise and with noise of 
variance between $0$ and $1$, and the results were very similar 
to the ones reported here.}
for $i=1,\dots,\bar{m}$ and, as in~\cite{JOT-12},
the regularization parameters were chosen as $\bar\alpha:=10^{-6}$ and $\bar{H}=\diag(\bar{\bh})$, with $\bar{h}_1:=0$ and $\bar{h}_j=\bar{h}_{j+\lceil\frac{\bar{n}}{2}\rceil-1}:=2(j-1)\pi$, for $j=2,\dots,\lceil\frac{\bar{n}}{2}\rceil$, and $\bar{h}_{\bar{n}}:=2(\lfloor\frac{\bar{n}}{2}\rfloor)\pi$.

Figure~\ref{fig:Data} reports our numerical results. 
Since these problems involve noise, we solved the problem $50$ times for each target function and report the average results.
(The average is not reported---the corresponding symbol is
not plotted---for problems on which one or more of the 50 runs
failed, i.e., did not converge when iteration count reaches 200.)
The sizes of the tested problems are $m:=10\,000$ and $n$ 
ranging from 10 to 500.

\begin{figure}[ht]
\centering
\captionsetup[subfigure]{labelformat=parens}

\subfloat[$g(t) = \sin(10t)\cos(25t^2)$]{
\captionsetup[subfigure]{labelformat=empty, position=top}
\subfloat[Iteration count]{\includegraphics[width=0.33\linewidth]{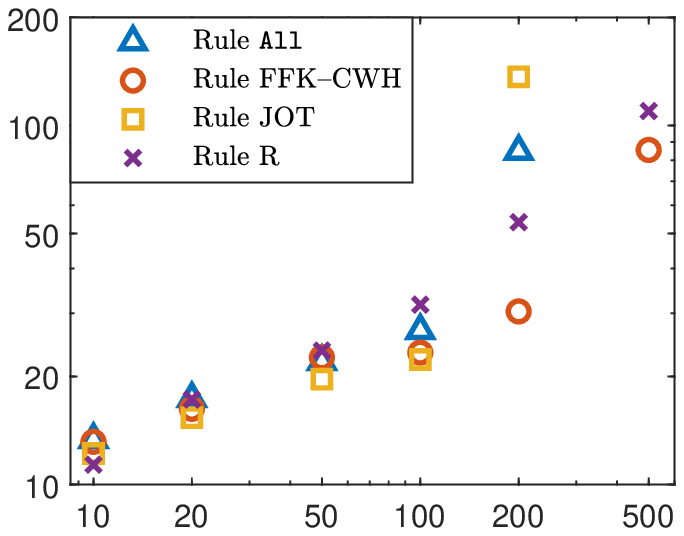}}~~~
\subfloat[Average $|Q|$]{\includegraphics[width=0.33\linewidth]{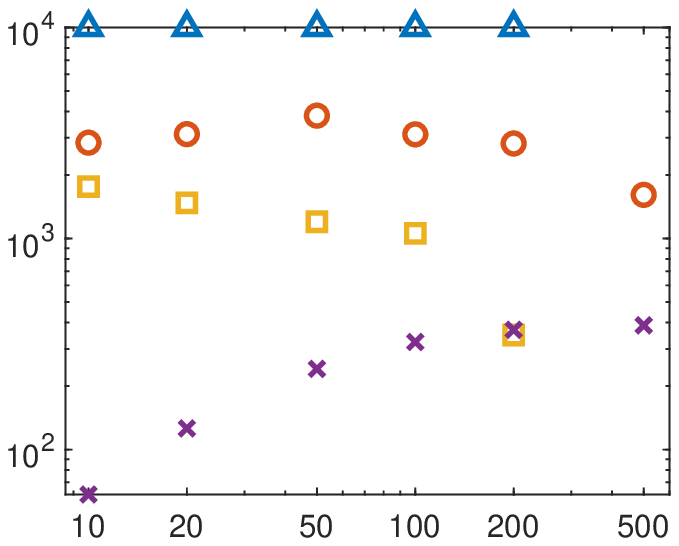}}~
\subfloat[Computation time (sec)]{\includegraphics[width=0.33\linewidth]{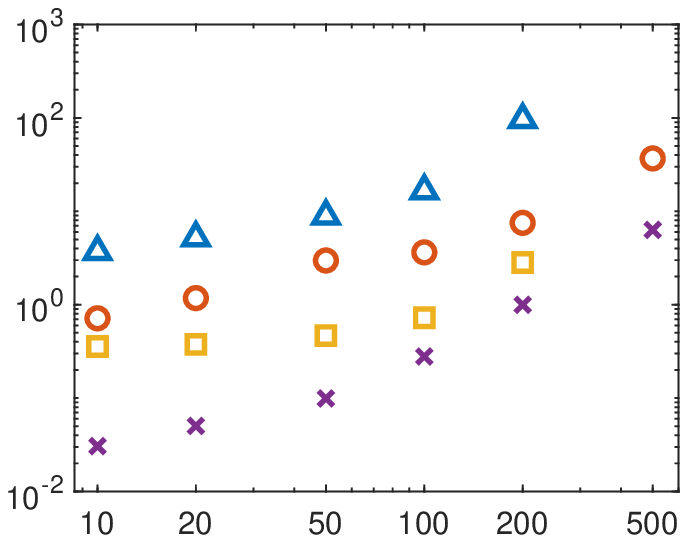}}
\setcounter{subfigure}{1}
\label{fig:Data1}}

\subfloat[$g(t) = \sin(5t^3)\cos^2(10t)$]{
\captionsetup[subfigure]{labelformat=empty, position=top}
\subfloat[Iteration count]{\includegraphics[width=0.33\linewidth]{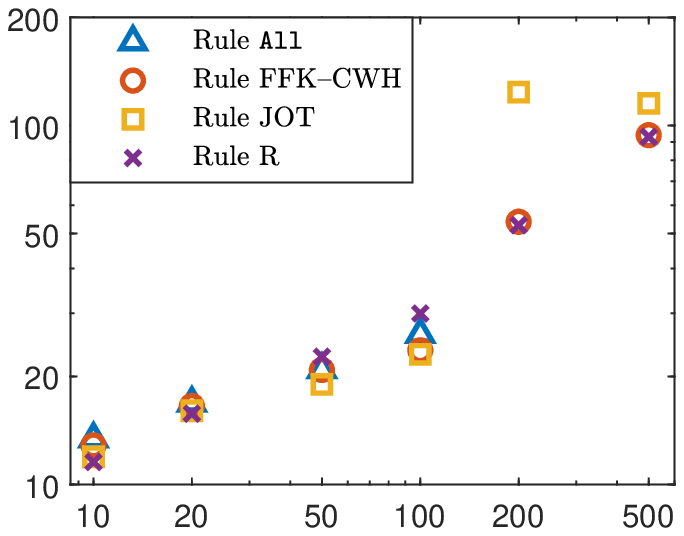}}~~~
\subfloat[Average $|Q|$]{\includegraphics[width=0.33\linewidth]{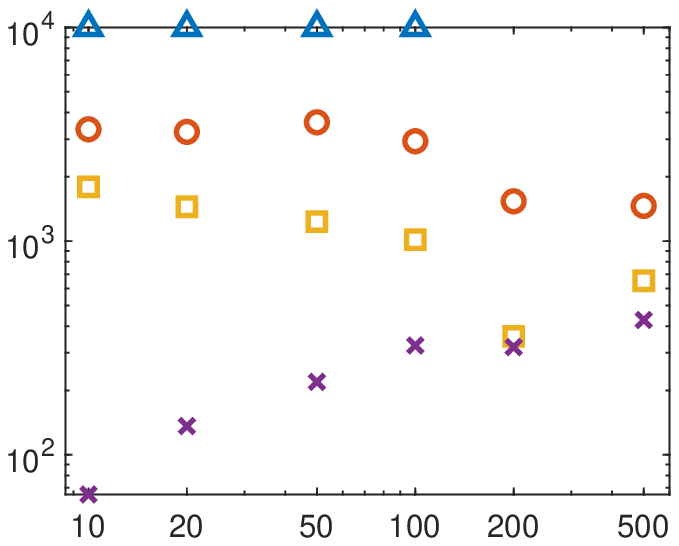}}~
\subfloat[Computation time (sec)]{\includegraphics[width=0.33\linewidth]{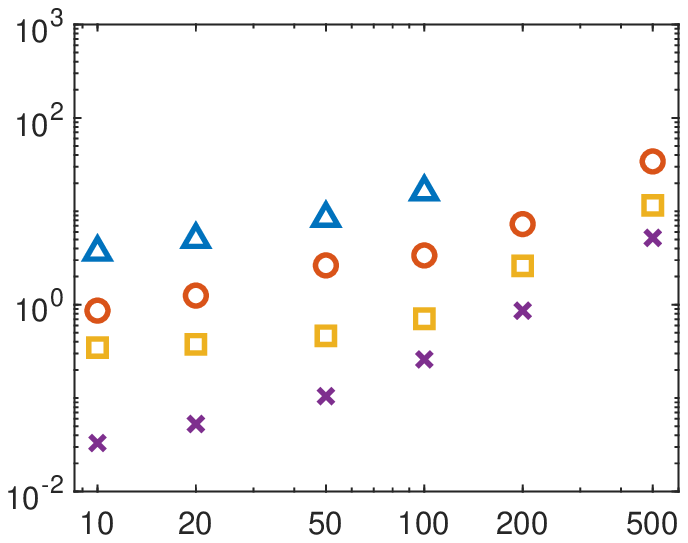}}
\setcounter{subfigure}{2}
\label{fig:Data2}}

\caption{\small Data-fitting problems 
with $m=10\,000$ constraints -- Numerical results on 
two data-fitting problems. In each figure, the $x$-axis is the number of variables ($n$) and the $y$-axis is iteration count, average size of working set, or computation time, all averaged over the 50 problem instances and plotted in logarithmic scale.
The results of {\ruleALL}, {\ruleFFK}, {\ruleJOT}, and {\rulename} are plotted 
as blue triangles, red circles, yellow squares, and purple crosses, respectively.}
\label{fig:Data}
\end{figure}

The results show that {\rulename} still outperforms other constraint-selection rules in terms of computation time, especially on problems with relatively small $n$.
In general, {\rulename} is two to ten times faster than the second best rule.
We observe in Figure~\ref{fig:Data} that {\ruleJOT} and {\ruleALL} failed to converge within $200$ iterations in a few instances on problems with relatively large $n$.
Numerical results suggest that these failures are due to ill-conditioning of $M_{Q}$ 
apparently producing poor search directions. 
Thus, we conjecture that accurate identification of active constraints not only reduces computation time, but also alleviates the ill-conditioning issue 
of $M_{Q}$ near optimal points.

\subsection{Comparison with Broadly Used, Mature Code%
\protect\footnote{
It may also be worth pointing out that a short decade ago, 
in~\cite{Winternitz-Thesis}, the performance of an early version of 
a constraint-reduced MPC algorithm (with a more elementary 
constraint-selection rule than {\ruleJOT}) was compared, 
on imbalanced filter-design applications (linear optimization), to 
the ``revised primal simplex with partial pricing'' algorithm discussed
in~\cite{BertsimasTsitsiklis97}, with encouraging results: on the 
tested problems, the constraint-reduced code proved competitive
with the simplex code on some such problems and superior on
others.}
}

\label{subsec:cvx comparison}

With a view toward calibrating the performance of Algorithm~\ref{algo:CR-MPC}
reported in Sections~\ref{subsec:random_problems} and \ref{subsec:data_fitting}, 
we carried out a numerical comparison with
two widely used solvers, SDPT3~\cite{Toh-Todd-Tutuncu-1999, Tutuncu-Toh-Todd-2003} 
and SeDuMi~\cite{Sturm-1999}.%
\footnote{While these two solvers have a broader scope (second-order cone optimization,
semidefinite optimization) than Algorithm~\ref{algo:CR-MPC}, they
allow a close comparison with our code, as Matlab implementations
are freely available within the CVX Matlab package~\cite{cvx,Grant-Boyd-2008}.}
The tests were performed
on the problems considered in Sections~\ref{subsec:random_problems} and \ref{subsec:data_fitting} with sizes $m=10\,000$ and $n=10,\,20,\,50,\,100,\,200,\,500$.
For~\ref{algo:CR-MPC}, the exact same implementation, including starting points 
and stopping criterion, as outlined in 
Section~\ref{subsec:implementation} was used.
As for the SDPT3 and SeDuMi solvers, we set the solver precision to $10^{-8}$ 
and let the solvers decide the starting points.

Table~\ref{table:cvx_comparison} reports the iteration counts and computation time of SDPT3, SeDuMi, and Algorithm~\ref{algo:CR-MPC} with {\ruleALL} and {\rulename}, on each type of tested problems.
The numbers in Table~\ref{table:cvx_comparison} are average values 
over 50 runs and over all six tested values of $n$.
These results show that, for such significantly 
imbalanced problems, 
constraint reduction brings a clear edge.
In particular, for such problems, Algorithm~\ref{algo:CR-MPC} 
with {\rulename} shows a 
significantly better time-performance than two mature solvers.

\begin{table}[h]
\begin{center}
\begin{tabular}{lrrrrrrrr}
 & \multicolumn{4}{c}{Randomly generated problems} 
 & \multicolumn{4}{c}{Data fitting problems}\\
     \cmidrule(r){2-5} \cmidrule(r){6-9} 
Algorithm & \multicolumn{2}{c}{  $H\succ \bzero$  } 
 & \multicolumn{2}{c}{  $H= \bzero$  } 
 & \multicolumn{2}{c}{{\footnotesize$\sin(10t)\cos(25t^2)$}}
 & \multicolumn{2}{c}{{\footnotesize$\sin(5t^3)\cos^2(10t)$}}\\
    \cmidrule(r){2-3}    \cmidrule(r){4-5} \cmidrule(r){6-7} \cmidrule(r){8-9} 
 & iteration & time & iteration & time & iteration & time & iteration & time\\
 SDPT3 & 23.6 & 35.8 & 21.2 & 22.3	 & 26.2 & 46.1 & 	26.7 & 	48.7 	\\
 SeDuMi & 22.0	& 4.0 & 16.3 & 4.4	& 26.9 & 5.1	& 26.1	& 5.5 \\
 {\ruleALL} & 14.1 & 16.5 & 14.7 &	18.7 & 48.8 & 94.3 & 54.3 & 	119.4 	\\
{\rulename} & 13.2	& 0.3 & 14.3 & 0.4 & 38.7 & 1.4 & 43.8	& 1.6 	
\end{tabular}
\end{center}
\caption{\small 
Comparison of Algorithm~\ref{algo:CR-MPC} with 
popular codes -- This table reports the iteration count and computation time (sec) for each of the compared algorithms on each type of tested problems, averaged over 50 runs. Every reported number is also averaged over various problems sizes: $m=10\,000$ and $n=10,\,20,\,50,\,100,\,200,\,500$.}
\label{table:cvx_comparison}
\end{table}

\section{Conclusion}
\label{sec:conclusion}

Convergence properties of the constraint-reduced algorithm proposed 
in this paper, which includes a number of novelties, were proved
independently of the choice of the working-set selection rule, provided
the rule satisfies Condition~\ref{cond:working_set_GC}.  
Under a specific such rule, based
on a modified active-set identification scheme, the algorithm performs 
remarkably well in practice, both on randomly generated problems (CQPs
as well as linear optimization problems) as well as data-fitting
problems.

Of course, while the focus of the present paper was on dense problems,
the concept of constraint reduction also applies to imbalanced large,
sparse problems. Indeed, whichever technique is used for solving the
Newton-KKT system, solving instead a reduced Newton-KKT system,
of like sparsity but of drastically reduced size, is bound to bring
in major computational savings when the total number of inequality
constraints is much larger than the number of inequality constraints
that are active at the solution---at least when the number of variables
is reasonably small compared to the number of inequality constraints.
In the case of sparse problems, the main computation cost in an IPM
iteration would be that of a (sparse) Cholesky factorization or, in 
an iterative approach to solving the linear system, would be linked to the
number of necessary iterations for reaching needed accuracy. In both 
cases, a major reduction in the dimension of the Newton-KKT system 
is bound to reduce the computation time, and like savings as in the 
dense case should be expected.

\bigskip\bigskip
\appendix
{\noindent \bf \normalsize Appendix\\}

\noindent
The following results are used in the proofs in 
Appendices~\ref{appendix:GC} and~\ref{appendix:LQC}.
Here we assume that $Q\subseteq\bm$
and $W$ is symmetric, with $W\succeq H\succ\bzero$.
First, from \eqref{eq:CR_KKT} and \eqref{eq:CR_KKT_C}, the approximate 
MPC search direction $(\Dbx, \Dbl_Q, \Dbs_Q)$ defined 
in \eqref{eq:search_dir_CR} solves 
\begin{equation}
J(W,A_Q,\bs_Q,\bslambda_Q)
  \left[\begin{array}{c}   
  \Dbx \\ \Dbl_Q\\  \Dbs_Q \end{array}\right]=
  \left[\begin{array}{c}   
  -\nabla f(\bx)+(A_Q)^T\bslambda_Q \\ \bzero\\  
  -S_Q\bslambda_Q + \gamma\sigma\mu_{(Q)}\mathbf{1}
  - \gamma\Delta S_Q^{\aff}\Delta\bslambda_Q^{\aff} 
  \end{array}\right]\:,
  \label{eq:CR_KKT_dx}
\end{equation}
and equivalently, when $\bs_Q>\bzero$, from~\eqref{eq:normal_sys_CR_P_x}, \eqref{eq:normal_sys_CR_P} and~\eqref{eq:normal_sys_CR_C},
\begin{equation}
\begin{alignedat}{2}
  M_{(Q)}&\Delta \bx=-\nabla f(\bx) + (A_Q)^TS_Q^{-1}(\gamma\sigma\mu_{(Q)}\mathbf{1}-\gamma\Delta S_Q^{\aff}\Delta\bslambda_Q^{\aff}),\\
  &\Dbs_Q=A_Q \Dbx,\\
  &\Dbl_Q=-\bslambda_Q + S_Q^{-1}(-\Lambda_Q\Dbs_Q+\gamma\sigma\mu_{(Q)}\mathbf{1}-\gamma\Delta S_Q^{\aff}\Delta\bslambda_Q^{\aff})\:.
\end{alignedat}
\label{eq:normal_sys_CR}
\end{equation}
Next, with  
$\tbl^+$ and $\tilde\bslambda^{\aff,+}$ given by
\begin{equation}
\tilde{\lambda}^+_i := \begin{cases}
\lambda_i + \Delta\lambda_i & i\in Q,\\
0 & i\in\Qc,
\end{cases}
\quad\text{ and }\quad
\tilde{\lambda}_i^{\aff,+} := \begin{cases}
\lambda_i + \Delta\lambda_i^\aff & i\in Q,\\
0 & i\in\Qc,
\end{cases}
\label{eq:tblq_tblaq_def}
\end{equation}
from the last equation
of~\eqref{eq:normal_sys_CR} and 
from~\eqref{eq:normal_sys_CR_P}, we have 
\begin{equation}
\tbl_Q^+ = S_Q^{-1}(-\Lambda_Q \Dbs_Q +
\gamma\sigma\mu_{(Q)}\mathbf{1} - 
\gamma\Delta S^{\aff}_Q \Delta \bslambda_Q^{\aff})\:,
\label{eq:tblq}
\end{equation}
\begin{equation}
\tilde\bslambda^{\aff,+}_Q = - S_Q^{-1} \Lambda_Q \Dbs^{\aff}_Q
= - S_Q^{-1} \Lambda_Q A_Q \Dbx^{\aff}_Q \:,
\label{eq:tblaq}
\end{equation}
and hence
\begin{equation}
(\Dbxa)^T(A_Q)^T\tilde\bslambda_Q^{\aff,+} = -(\Dbxa)^T(A_Q)^T S_Q^{-1}\Lambda_Q A_Q \Dbx^\aff \leq 0\:,
\label{eq:lemma_desc_pf2}
\end{equation}
so that, when in addition $\bslambda>\bzero$,
$(\Dbxa)^T(A_Q)^T\tilde\bslambda^{\aff,+} =0$ if 
and only if $A_Q\Dbx^\aff=\bzero$. 
Also, \eqref{eq:normal_sys_CR_P_x} yields
\begin{equation}
\nabla f(\bx)^T\Dbxa = -{(\Dbxa)}^T M_{(Q)}\Dbxa = - {(\Dbxa)}^T W\Dbxa
-{(\Dbxa)}^T (A_Q)^T S_Q^{-1}\Lambda_Q A_Q\Dbxa\:.
\label{eq:nablafTDx}
\end{equation}
Since $W\succeq H$, it follows from~\eqref{eq:lemma_desc_pf2} that,
\begin{equation}
\nabla f(\bx)^T\Dbxa + {(\Dbxa)}^T H \Dbxa \leq 0\:.
\label{eq:f'd+d'Hd<=0}
\end{equation}
In addition, when $S_Q\succ\bzero$, $\Lambda_Q\succ\bzero$ and since $W\succeq \bzero$, the right-hand side of \eqref{eq:nablafTDx} is strictly negative as long as $W\Dbxa$ 
and $A_Q\Dbxa$ are not both zero. 
In particular, when $[W~(A_Q)^T]$ has full row rank, 
\begin{equation}
\nabla f(\bx)^T\Dbxa<0\quad{\rm if}~\Dbxa\not=\bzero\:.
\label{eq:f'dx<0}
\end{equation}
Finally, we state and prove two technical lemmas.
\begin{lemma}
\label{lemma:dx_cvge}
Given an infinite index set $K$, $\{\Dbxk\}\to\bzero$ as $k\to\infty$, $k\in K$ if and only if $\{\Dbxak\}\to\bzero$ as $k\to\infty$, $k\in K$.
\end{lemma}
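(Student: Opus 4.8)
The plan is to obtain, at each iteration, a two-sided comparison between $\|\Dbxa\|$ and $\|\Dbx\|$ directly from the construction of the mixing parameter $\gamma$, and then simply evaluate that comparison along $K$.

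First I would isolate the elementary bound $\gamma\,\|\Dbxc\| \le \tau\,\|\Dbxa\|$, which holds at every iteration. When $q\ge 1$ and $\Dbxc\neq\bzero$ it is immediate from the second entry of the minimum in \eqref{eq:mixing_para}; when $\Dbxc=\bzero$ the left-hand side vanishes, and when $q=0$ we have $\gamma:=0$ (and in fact $\Dbxc=\bzero$, since the right-hand side of \eqref{eq:CR_KKT_C} is then zero), so the bound again holds trivially. Combining this with $\Dbx=\Dbxa+\gamma\Dbxc$ (see \eqref{eq:search_dir_CR}) and the triangle inequality used in both directions --- $\|\Dbx\|\le\|\Dbxa\|+\gamma\|\Dbxc\|$ on one side and $\|\Dbxa\|=\|\Dbx-\gamma\Dbxc\|\le\|\Dbx\|+\gamma\|\Dbxc\|$ on the other --- yields
\[
(1-\tau)\,\|\Dbxa\| \;\le\; \|\Dbx\| \;\le\; (1+\tau)\,\|\Dbxa\|,
\]
where $1-\tau>0$ because $\tau\in[0,1)$.

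Applying this chain of inequalities with the iteration-$k$ quantities $\Dbxk$ and $\Dbxak$ gives $(1-\tau)\|\Dbxak\|\le\|\Dbxk\|\le(1+\tau)\|\Dbxak\|$ for every $k$, and both implications of the lemma follow immediately: the upper bound shows that $\{\Dbxak\}\to\bzero$ on $K$ implies $\{\Dbxk\}\to\bzero$ on $K$, while the lower bound, after dividing by $1-\tau$, gives the converse. No limiting, compactness, or invertibility considerations enter. The only step that warrants a moment's attention is the verification of $\gamma\|\Dbxc\|\le\tau\|\Dbxa\|$ in the degenerate cases $\Dbxc=\bzero$ and $q=0$, which is precisely why I would establish that bound unconditionally at the outset; I do not foresee a genuine obstacle.
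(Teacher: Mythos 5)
Your proof is correct and follows essentially the same route as the paper's: both extract the bound $\gamma\|\Dbxc\|\leq\tau\|\Dbxa\|$ from \eqref{eq:mixing_para} and use the triangle inequality on \eqref{eq:search_dir_CR} to sandwich $\|\Dbx\|$ between $(1-\tau)\|\Dbxa\|$ and $(1+\tau)\|\Dbxa\|$, with $\tau<1$. Your explicit treatment of the degenerate cases ($\Dbxc=\bzero$, $q=0$) is a small addition the paper leaves implicit, but the argument is the same.
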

\begin{proof}
We show that $\|\Dbxk\|$ is sandwiched between constant multiples of $\|\Dbxak\|$.
We have from the search direction given in \eqref{eq:search_dir_CR} that, for all $k$,
$\|\Dbxk -\Dbxak\| = \|\gamma\Dbxck\|\leq \tau \|\Dbxak\|$,
where $\tau\in(0,1)$ and the inequality follows from \eqref{eq:mixing_para}.
Apply triangle inequality leads to
$(1-\tau)\|\Dbxak\|\leq\|\Dbxk\|\leq(1+\tau)\|\Dbxak\|$ for all $k$, proving the claim.
\end{proof}

\begin{lemma}
\label{lemma:step_size_bound}
Suppose Assumption~\ref{assum:non_empty_bdd_sol} holds.
Let $Q\subset\bm$, $\cA\subseteq Q$, $\bx\in \cF_P^o$, 
$\bs:=A\bx-\bb~(>\bzero)$, and
$\bslambda>\bzero$ enjoy
the following property:
With $\Dbl_Q$, $\Dbla_Q$, $\Dbs$, and $\Dbsa$ 
produced by Iteration~\ref{algo:CR-MPC},
$\lambda_i+\Delta\lambda_i>0$ 
for all $i\in\cA$ and $s_i+\Delta s_i>0$ for all 
$i\in Q\setminus\cA$. 
Then
\begin{equation}
\bar{\alpha}_\dual \geq \min\left\{ 1 , \,
\min_{i\in Q\setminus\cA}
\left\{\frac{s_i}{|s_i+\Delta s_i^{\aff}|}\right\} , \,
\min_{i\in Q\setminus\cA}
\left\{\frac{s_i-|\Delta s_i^\aff|}{|s_i+\Delta s_i|} \right\}
\right\}
\label{eq:step_size_bound_dual}
\end{equation}
and 
\begin{equation}
\bar{\alpha}_\prim \geq \min\left\{ 1 , \,
\min_{i\in\cA}
\left\{\frac{\lambda_i}{|\lambda_i+\Delta\lambda_i^\aff|}\right\} , \,
\min_{i\in\cA}
\left\{
\frac{\lambda_i-|\Delta\lambda_i^\aff|}{|\lambda_i+\Delta\lambda_i|}\right\}  
\right\}\:.
\label{eq:step_size_bound_primal}
\end{equation}
\end{lemma}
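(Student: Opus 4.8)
Both displayed inequalities are mirror images of one another, so the plan is to prove the dual bound \eqref{eq:step_size_bound_dual} in full and then obtain \eqref{eq:step_size_bound_primal} by the identical argument with the slack and dual data interchanged, i.e.\ $(\bs,\Dbs,\Delta s_i^{\aff})\leftrightarrow(\bslambda_Q,\Dbl_Q,\Delta\lambda_i^{\aff})$ and $\cA\leftrightarrow Q\setminus\cA$ --- a symmetry visible in the complementarity block of \eqref{eq:CR_KKT_dx}, in the affine relation \eqref{eq:normal_sys_CR_P}, and in the two hypotheses ($\lambda_i+\Delta\lambda_i>0$ on $\cA$ versus $s_i+\Delta s_i>0$ on $Q\setminus\cA$). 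For \eqref{eq:step_size_bound_dual} I would first note that there is nothing to prove if its right-hand side is $\le 0$, since $\bar\alpha_\dual>0$ (as $\bslambda_Q>\bzero$); hence I may assume it is positive, which together with $s_i+\Delta s_i>0$ for $i\in Q\setminus\cA$ forces $s_i-|\Delta s_i^{\aff}|>0$, and therefore also $s_i+\Delta s_i^{\aff}>0$, for every $i\in Q\setminus\cA$.

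Since $\bar\alpha_\dual=\text{argmax}\{\alpha:\lambda_i+\alpha\Delta\lambda_i\ge 0\ \forall i\in Q\}$ and each $\alpha\mapsto\lambda_i+\alpha\Delta\lambda_i$ is affine with positive value $\lambda_i$ at $\alpha=0$, it suffices to produce, for every $i\in Q$, some $\alpha_i$ no smaller than the claimed bound at which $\lambda_i+\alpha_i\Delta\lambda_i\ge 0$; then $\bar\alpha_\dual\ge\min_{i\in Q}\alpha_i$, which is exactly the claimed bound. For $i\in\cA$ this is immediate: $\lambda_i+\Delta\lambda_i>0$ together with $\lambda_i>0$ gives $\lambda_i+\alpha\Delta\lambda_i>0$ for all $\alpha\in[0,1]$ by convexity, so $\alpha_i:=1$ works. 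The whole content of the lemma is therefore the per-component claim that, for $i\in Q\setminus\cA$, $\lambda_i+\alpha\Delta\lambda_i\ge 0$ with $\alpha:=\min\{1,\ s_i/(s_i+\Delta s_i^{\aff}),\ (s_i-|\Delta s_i^{\aff}|)/(s_i+\Delta s_i)\}$.

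To prove this I would feed in the closed forms already recorded before the lemma. From \eqref{eq:normal_sys_CR_P}, $\lambda_i+\Delta\lambda_i^{\aff}=-(\lambda_i/s_i)\Delta s_i^{\aff}$; from \eqref{eq:tblq}--\eqref{eq:tblq_tblaq_def}, $\lambda_i+\Delta\lambda_i=s_i^{-1}(-\lambda_i\Delta s_i+\gamma\sigma\mu_{(Q)}-\gamma\,\Delta s_i^{\aff}\Delta\lambda_i^{\aff})$. Writing $\lambda_i+\alpha\Delta\lambda_i=(1-\alpha)\lambda_i+\alpha(\lambda_i+\Delta\lambda_i)$, substituting, and using the affine relation to replace $-\Delta s_i^{\aff}\Delta\lambda_i^{\aff}$ by $(\lambda_i/s_i)\Delta s_i^{\aff}(s_i+\Delta s_i^{\aff})$, I expect to arrive at
\begin{equation*}
\lambda_i+\alpha\Delta\lambda_i
=\frac{\lambda_i}{s_i^{2}}\,\big[\,s_i\big(s_i-\alpha(s_i+\Delta s_i)\big)+\alpha\gamma\,\Delta s_i^{\aff}(s_i+\Delta s_i^{\aff})\,\big]+\frac{\alpha\gamma\sigma\mu_{(Q)}}{s_i}\,.
\end{equation*}
Because $\gamma,\sigma,\mu_{(Q)}\ge 0$ the last term is nonnegative, so it remains only to show the bracket is nonnegative. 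Since $\alpha\le (s_i-|\Delta s_i^{\aff}|)/(s_i+\Delta s_i)$ and $s_i+\Delta s_i>0$, one has $s_i-\alpha(s_i+\Delta s_i)\ge|\Delta s_i^{\aff}|$; a two-line case split on the sign of $\Delta s_i^{\aff}$ --- using $\alpha,\gamma\in[0,1]$ and $0<s_i-|\Delta s_i^{\aff}|$ --- then bounds the bracket below by $|\Delta s_i^{\aff}|^{2}\ge 0$ when $\Delta s_i^{\aff}<0$ and by $0$ when $\Delta s_i^{\aff}\ge 0$. This yields $\lambda_i+\alpha\Delta\lambda_i\ge 0$, completing \eqref{eq:step_size_bound_dual}; \eqref{eq:step_size_bound_primal} then follows by the symmetric argument.

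The step I expect to be the main obstacle is producing the displayed identity: it takes a careful combination of \eqref{eq:tblq} with the affine relation and a regrouping that makes exactly the combination $s_i-\alpha(s_i+\Delta s_i)$ appear, after which the sign bookkeeping is routine. It is worth remarking that the middle term $s_i/|s_i+\Delta s_i^{\aff}|$ in the bound --- the dual step the pure affine direction would permit on constraint $i$ --- turns out never to be binding once the reduction ``right-hand side $>0$'' has been made; it is retained in the statement because it is the term that naturally surfaces if one instead estimates the affine and corrector parts of $\Delta\lambda_i$ separately.
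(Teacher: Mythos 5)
Your argument for the dual bound is correct, but it is organized differently from the paper's. The paper localizes the question at a blocking index: assuming $\bar{\alpha}_\dual<1$, it picks $i_0$ with $\Delta\lambda_{i_0}<-\lambda_{i_0}<0$ and $\bar{\alpha}_\dual=\lambda_{i_0}/|\Delta\lambda_{i_0}|$, notes $i_0\in Q\setminus\cA$, and then splits on whether $|\Delta\lambda_{i_0}^{\aff}|\geq|\Delta\lambda_{i_0}|$ or not; the first case yields exactly the middle term $s_{i_0}/|s_{i_0}+\Delta s_{i_0}^{\aff}|$ (via the componentwise relation $\lambda_i s_i+s_i\Delta\lambda_i^{\aff}+\lambda_i\Delta s_i^{\aff}=0$ from \eqref{eq:normal_sys_CR_P}), and the second yields the last term (via \eqref{eq:tblq}). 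You use the same two ingredients but run a feasibility argument instead: after reducing to the case where the right-hand side of \eqref{eq:step_size_bound_dual} is positive (which legitimately forces $s_i-|\Delta s_i^{\aff}|>0$ on $Q\setminus\cA$), you verify componentwise, through an exact identity for $\lambda_i+\alpha\Delta\lambda_i$ that I checked and which is correct, that the candidate step is admissible for every $i\in Q$. This buys a slightly sharper conclusion—your proof never needs the middle term, so under the positivity reduction the bound holds with that term deleted—at the price of the algebraic regrouping producing the bracket $s_i\bigl(s_i-\alpha(s_i+\Delta s_i)\bigr)+\alpha\gamma\,\Delta s_i^{\aff}(s_i+\Delta s_i^{\aff})$, whereas the paper's case split is shorter and explains where each term of the stated bound comes from.

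One shared caveat: both you and the paper dispatch \eqref{eq:step_size_bound_primal} by ``flipping the roles'' of $\bs$ and $\bslambda$, but the symmetry is not literal, because $\bar{\alpha}_\prim$ in \eqref{eq:step_size} is taken over all $i\in\bm$ while $\bar{\alpha}_\dual$ only involves $i\in Q$; for $i\in\Qc$ neither the hypotheses nor the relations \eqref{eq:normal_sys_CR_P}, \eqref{eq:tblq} are available, so the flipped argument covers only $i\in Q$ (your swap $\cA\leftrightarrow Q\setminus\cA$ makes this visible). Since the paper's own proof consists of the same one-line appeal to symmetry, and only \eqref{eq:step_size_bound_dual} is used later (in Lemma~\ref{lemma:lambda_convergence_1}), this is not a gap attributable to your proposal, but if you want the primal statement airtight you would need to say something about the components outside $Q$.
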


\begin{proof}
If $\bar{\alpha}_\dual \geq 1$, \eqref{eq:step_size_bound_dual} holds 
trivially, hence suppose $\bar{\alpha}_\dual < 1$.
Then, from the definition of $\bar{\alpha}_\dual$ in~\eqref{eq:step_size}, 
we know that there exists some 
index $i_0\in Q$ such that 
\begin{equation}
\Delta \lambda_{i_0}<-\lambda_{i_0}<0 \quad\text{  and  }\quad 
\bar{\alpha}_\dual = \frac{\lambda_{i_0}}{|\Delta \lambda_{i_0}|}\:.
\label{eq:i_0}
\end{equation}
Since $\lambda_i+\Delta\lambda_i>0$ for all $i\in\cA$,
we have $i_0\in Q\setminus\cA$.
Now we consider two cases: 
$|\Delta \lambda_{i_0}^\aff|\geq|\Delta \lambda_{i_0}|$ 
and $|\Delta \lambda_{i_0}^\aff|<|\Delta \lambda_{i_0}|$.
If $|\Delta \lambda_{i_0}^\aff|\geq|\Delta \lambda_{i_0}|$, 
then, since the second equation in~\eqref{eq:normal_sys_CR_P} 
is equivalently written as 
$\lambda_i s_i+s_i\Delta\lambda_i^\aff+\lambda_i\Delta s_i^\aff=0$
for all $i\in Q$ and since $\lambda_s s_i>0$ for all $i\in\bm$,
it follows from~\eqref{eq:i_0} that
\begin{equation*}
\bar{\alpha}_\dual = \frac{\lambda_{i_0}}{|\Delta \lambda_{i_0}|} 
\geq \frac{\lambda_{i_0}}{|\Delta \lambda_{i_0}^\aff|}
= \frac{s_{i_0}}{|s_{i_0}+\Delta s_{i_0}^{\aff}|}\:,
\end{equation*}
proving~\eqref{eq:step_size_bound_dual}.
To conclude, suppose now that
$|\Delta \lambda_{i_0}^\aff|<|\Delta \lambda_{i_0}|$.
Since (i) $s_i+\Delta s_i>0$ for $i\in Q\setminus\cA$;
(ii) $\gamma$, $\sigma$, and $\mu_{(Q)}$ in \eqref{eq:tblq} 
are non-negative; and (iii) $\Delta\lambda_{i_0}<0$ (from~\eqref{eq:i_0}), 
\eqref{eq:tblq_tblaq_def}--\eqref{eq:tblq}
yield
\begin{equation*}
{\lambda}_{i_0} (s_{i_0}+\Delta s_{i_0}) \geq s_{i_0}|\Delta \lambda_{i_0}|
- \gamma |\Delta s_{i_0}^{\aff}|
|\Delta\lambda_{i_0}^{\aff}|\:.
\end{equation*}
Applying this inequality to~\eqref{eq:i_0} leads to
\begin{equation*}
\bar{\alpha}_\dual = \frac{\lambda_{i_0}}{|\Delta \lambda_{i_0}|}
\geq \frac{s_{i_0}}{|s_{i_0}+\Delta s_{i_0}|}
- \frac{\gamma|\Delta \lambda_{i_0}^\aff| |\Delta s_{i_0}^\aff|}
       {|s_{i_0}+\Delta s_{i_0}||\Delta \lambda_{i_0}|}
\geq \frac{s_{i_0}-|\Delta s_{i_0}^\aff|}{|s_{i_0}+\Delta s_{i_0}|},
\end{equation*}
where the last inequality holds since $\gamma\leq 1$
and $|\Delta \lambda_{i_0}^\aff|<|\Delta \lambda_{i_0}|$. 
Following a very similar argument that flips the roles of $\bs$ 
and $\bslambda$, one can prove that \eqref{eq:step_size_bound_primal} 
also holds.
\end{proof}

\section{Proof of Theorem~\ref{thm:convergence} 
and Corollary~\ref{cor:exact_working_set}}
\label{appendix:GC}
Parts of this proof are inspired from~\cite{TZ:94}, \cite{JOT-12}, 
\cite{JungThesis}, \cite{WNTO-2012}, and~\cite{WTA-2014}.  Throughout, 
we assume that the constraint-selection rule used by the algorithm 
is such that Condition~\ref{cond:working_set_GC} is satisfied
and (except in the proof of Lemma~\ref{lemma:termination})
we let $\varepsilon=0$ and assume that the iteration never stops.

A central feature of Algorithm~\ref{algo:CR-MPC},
which plays a key role in the convergence proofs,
is that it forces
descent with respect of the primal objective function.
The next proposition establishes 
some related facts.
\begin{proposition} 
\label{proposition:desc_mpc}
Suppose $\bslambda>\bzero$ and $\bs>\bzero$, and $W$ satisfies $W\succ\bzero$
and $W\succeq H$.
If $\Dbxa \neq \bzero$, then the following inequalities hold:
\begin{align}
f(\bx+\alpha\Dbx^\aff)<f(\bx)\:,\quad \forall\alpha\in(0,2)\:,
\label{eq:desc_aff} \\
\frac{\partial}{\partial \alpha} f(\bx+\alpha\Dbx^\aff)< 0\:,\quad 
\forall\alpha\in[0,1]\:,
\label{eq:desc_aff_deri} \\
f(\bx)-f(\bx+\alpha\Dbx)\geq \frac{\omega}{2}(f(\bx)-f(\bx+\alpha\Dbx^\aff))\:,\quad\forall\alpha\in[0,1]\:,
\label{eq:prop_desc_mpc} \\
f(\bx+\alpha\Dbx)<f(\bx)\:,\quad\forall\alpha\in(0,1]\:.
\label{eq:prop_desc_mpc2}
\end{align}
\end{proposition}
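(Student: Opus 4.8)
\emph{Proof strategy.} The plan is to exploit that $f$ is a convex quadratic, so that its restriction to any line is an exact second-degree polynomial, and to plug in the three facts already assembled just above the statement: (a) $\nabla f(\bx)^T\Dbxa<0$ whenever $\Dbxa\neq\bzero$, which is \eqref{eq:f'dx<0} and is available because $W\succ\bzero$ makes $[W~(A_Q)^T]$ full row rank; (b) $(\Dbxa)^TH\Dbxa\geq 0$, strictly here since $H\succ\bzero$ and $\Dbxa\neq\bzero$; and (c) the curvature bound $(\Dbxa)^TH\Dbxa\leq-\nabla f(\bx)^T\Dbxa$, which is exactly \eqref{eq:f'd+d'Hd<=0}. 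Throughout I would work with the upward parabola $\phi(\alpha):=f(\bx+\alpha\Dbxa)=f(\bx)+\alpha\,\nabla f(\bx)^T\Dbxa+\tfrac12\alpha^2(\Dbxa)^TH\Dbxa$, which has $\phi'(0)=\nabla f(\bx)^T\Dbxa<0$ and constant second derivative $\phi''=(\Dbxa)^TH\Dbxa>0$.

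For \eqref{eq:desc_aff} and \eqref{eq:desc_aff_deri}: the global minimizer of $\phi$ is $\alpha^\star=-\phi'(0)/\phi''>0$, and the curvature bound (c) forces $\alpha^\star\geq 1$; hence $\phi$ is strictly decreasing on $[0,1]$, which is \eqref{eq:desc_aff_deri}. Quantitatively, $\phi'(\alpha)=\nabla f(\bx)^T\Dbxa+\alpha(\Dbxa)^TH\Dbxa\leq\nabla f(\bx)^T\Dbxa+(\Dbxa)^TH\Dbxa$ on $[0,1]$ by (b), and using \eqref{eq:nablafTDx}--\eqref{eq:lemma_desc_pf2} this rightmost quantity equals $-(\Dbxa)^T(W-H)\Dbxa-(\Dbxa)^T(A_Q)^TS_Q^{-1}\Lambda_Q A_Q\Dbxa\leq 0$; strictness of $\phi'$ on $[0,1]$ then follows by combining with (a) through a one-line case split at the endpoint $\alpha=1$. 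Similarly $\phi(\alpha)-f(\bx)\leq\alpha\,\nabla f(\bx)^T\Dbxa\,(1-\alpha/2)<0$ for $\alpha\in(0,2)$, which is \eqref{eq:desc_aff}.

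For \eqref{eq:prop_desc_mpc} and \eqref{eq:prop_desc_mpc2}: if $q=0$ or $\Dbxc=\bzero$ then $\gamma\in\{0,1\}$ and $\Dbx=\Dbxa$, so \eqref{eq:prop_desc_mpc} holds trivially (as $1-\omega/2>0$ and $f(\bx)-f(\bx+\alpha\Dbxa)\geq 0$ by \eqref{eq:desc_aff}); otherwise I would argue as follows. First, the set of $\tilde\gamma$ admissible in \eqref{eq:mixing_para_1} is an interval $[0,\gamma_1]$, because the map $\tilde\gamma\mapsto f(\bx)-f(\bx+\Dbxa+\tilde\gamma\Dbxc)$ is concave and at $\tilde\gamma=0$ equals $f(\bx)-f(\bx+\Dbxa)>0$ (by \eqref{eq:desc_aff}), hence strictly exceeds $\omega\bigl(f(\bx)-f(\bx+\Dbxa)\bigr)$, so that $0$ is admissible and concavity yields the interval. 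Since $\gamma\leq\gamma_1$ by \eqref{eq:mixing_para} and $\Dbx=\Dbxa+\gamma\Dbxc$ (see \eqref{eq:MPC_direction_CR}), $\gamma$ is admissible, giving $f(\bx)-f(\bx+\Dbx)\geq\omega\bigl(f(\bx)-f(\bx+\Dbxa)\bigr)$. Convexity of $f$ along the segment $[\bx,\bx+\Dbx]$ then yields, for $\alpha\in[0,1]$, $f(\bx)-f(\bx+\alpha\Dbx)\geq\alpha\bigl(f(\bx)-f(\bx+\Dbx)\bigr)\geq\alpha\omega\bigl(f(\bx)-f(\bx+\Dbxa)\bigr)$. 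Finally, the curvature bound (c) gives the elementary estimate $f(\bx)-f(\bx+\alpha\Dbxa)\leq 2\alpha\bigl(f(\bx)-f(\bx+\Dbxa)\bigr)$ for $\alpha\in[0,1]$, since $f(\bx)-f(\bx+\alpha\Dbxa)=\alpha\bigl(-\nabla f(\bx)^T\Dbxa\bigr)-\tfrac12\alpha^2(\Dbxa)^TH\Dbxa\leq\alpha\bigl(-\nabla f(\bx)^T\Dbxa\bigr)$ while, by (c), $f(\bx)-f(\bx+\Dbxa)\geq-\tfrac12\nabla f(\bx)^T\Dbxa$. Chaining the last two displays gives $f(\bx)-f(\bx+\alpha\Dbx)\geq\tfrac{\omega}{2}\bigl(f(\bx)-f(\bx+\alpha\Dbxa)\bigr)$, which is \eqref{eq:prop_desc_mpc}; and \eqref{eq:prop_desc_mpc2} follows immediately, since for $\alpha\in(0,1]$ the right-hand side of \eqref{eq:prop_desc_mpc} is strictly positive by \eqref{eq:desc_aff}.

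The genuinely load-bearing step is \eqref{eq:prop_desc_mpc}, and the delicate point is to obtain \emph{exactly} the constant $\omega/2$: the factor $\omega$ is inherited from the mixing rule $\gamma\leq\gamma_1$ (propagated from the full step to intermediate steps $\alpha\Dbx$ by convexity of $f$), whereas the extra factor $1/2$ is precisely the slack in comparing the affine-scaling decrease at the fractional point $\alpha\Dbxa$ with $\alpha$ times the affine-scaling decrease at $\Dbxa$ --- and that slack is furnished by the curvature bound \eqref{eq:f'd+d'Hd<=0}. The only other slightly fussy point is strictness of $\phi'$ at the right endpoint $\alpha=1$ in \eqref{eq:desc_aff_deri}, dispatched by the short case split noted above.
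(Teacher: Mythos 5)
Your treatment of the load-bearing inequality \eqref{eq:prop_desc_mpc} (and hence \eqref{eq:prop_desc_mpc2}) is correct and is in substance the paper's own argument: the full-step inequality $f(\bx)-f(\bx+\Dbx)\geq\omega\bigl(f(\bx)-f(\bx+\Dbxa)\bigr)$ is extracted from the definition \eqref{eq:mixing_para_1} of $\gamma_1$ together with convexity along the corrector direction (the paper phrases this via convexity of $\psi(\theta):=\omega(f(\bx)-f(\bx+\Dbxa))-(f(\bx)-f(\bx+\Dbxa+\theta\Dbxc))$, you via concavity of the decrease --- the same fact), it is then propagated to fractional steps using $H\succeq\bzero$, and the extra factor $1/2$ comes from the curvature bound \eqref{eq:f'd+d'Hd<=0}. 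The paper carries this out in one chained display rather than through your intermediate estimate $f(\bx)-f(\bx+\alpha\Dbxa)\leq2\alpha(f(\bx)-f(\bx+\Dbxa))$, but the ingredients and their roles are identical, so this part is essentially the same proof, merely reorganized.

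The genuine misstep is in your argument for \eqref{eq:desc_aff}--\eqref{eq:desc_aff_deri}: you assert $(\Dbxa)^TH\Dbxa>0$ ``since $H\succ\bzero$.'' The proposition assumes only $W\succ\bzero$ and $W\succeq H$; throughout the paper $H$ is merely positive semi-definite, and the case $H=\bzero$ (linear optimization) is explicitly in scope, so your premise is false and the ``upward parabola with global minimizer $\alpha^\star=-\phi'(0)/\phi''$'' framing is unavailable whenever $(\Dbxa)^TH\Dbxa=0$, i.e., when $f$ restricted to the affine-scaling ray is affine. The paper's proof opens with exactly this dichotomy: if $(\Dbxa)^TH\Dbxa=0$, \eqref{eq:desc_aff}--\eqref{eq:desc_aff_deri} follow at once from \eqref{eq:f'dx<0} (which you correctly derive from $W\succ\bzero$); if $(\Dbxa)^TH\Dbxa>0$, it uses the minimizer formula together with \eqref{eq:nablafTDx}, \eqref{eq:lemma_desc_pf2} and $W\succeq H$ to get $\hat\alpha\geq1$. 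Your backup ``quantitative'' displays ($\phi(\alpha)-f(\bx)\leq\alpha\,\nabla f(\bx)^T\Dbxa\,(1-\alpha/2)$ and $\phi'(\alpha)\leq\phi'(1)\leq0$) use only $H\succeq\bzero$ together with \eqref{eq:f'dx<0} and \eqref{eq:f'd+d'Hd<=0}, so they survive; the repair is simply to drop the claim $H\succ\bzero$ and add the degenerate-curvature case, but as written the first half of your proof rests on an assumption the statement does not grant. (I would not press you on strictness of \eqref{eq:desc_aff_deri} at the endpoint $\alpha=1$: the paper's own proof only yields $\hat\alpha\geq1$, so that corner is equally delicate there.)
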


\begin{proof}
When $f(\bx+\alpha\Dbxa)$ is linear in $\alpha$, i.e., when
$(\Dbxa)^T H\Dbxa=0$, 
then in view of~\eqref{eq:f'dx<0},
\eqref{eq:desc_aff}--\eqref{eq:desc_aff_deri} hold
trivially.  When, on the other hand, 
$(\Dbxa)^T H\Dbxa>0$, $f(\bx+\alpha\Dbxa)$ is
quadratic and strictly convex in $\alpha$ and is minimized at
\[
\hat\alpha 
= -\frac{\nabla f(\bx)^T \Dbxa} {{(\Dbxa)}^T H\Dbxa}
= 1+\frac{{(\Dbxa)}^T\left(W-H + (A_Q)^T S_Q^{-1}\Lambda_Q A_Q\right)\Dbxa}
{(\Dbxa)^T H\Dbxa}
\geq1,
\]
where we have used~\eqref{eq:nablafTDx}, \eqref{eq:lemma_desc_pf2}, 
and the fact that $W\succeq H$,
and~\eqref{eq:desc_aff}--\eqref{eq:desc_aff_deri} again follow.
Next, note that, since $\omega>0$,
\begin{equation*}
\psi(\theta):= \omega(f(\bx)-f(\bx+\Dbxa)) - (f(\bx)-f(\bx+\Dbxa+\theta\Dbxc))\:
\end{equation*}
is quadratic and convex.
Now, since $\gamma_1$ satisfies the constraints in its 
definition~\eqref{eq:mixing_para_1}, we see that $\psi(\gamma_1)\leq 0$, and
since $\omega\leq1$, it follows from~\eqref{eq:desc_aff}
that $\psi(0)= (\omega-1)(f(\bx)-f(\bx+\Dbxa))\leq 0$.
Since $\gamma\in[0,\gamma_1]$ (see \eqref{eq:mixing_para}), it follows that 
$\psi(\gamma)\leq0$, i.e., 
since from~\eqref{eq:search_dir_CR} $\Dbx=\Dbxa+\gamma\Dbxc$,
\begin{equation*}
f(\bx)-f(\bx+\Dbx)\geq \omega(f(\bx)-f(\bx+\Dbx^{\aff}))\:,
\end{equation*}
i.e.,
\begin{equation}
-\nabla f(\bx)^T \Dbx-\frac{1}{2}{\Dbx}^T H\Dbx
\geq \omega \left( -\nabla f(\bx)^T \Dbxa -\frac{1}{2}{(\Dbxa)}^T H\Dbxa \right).
\label{eq:La1iii}
\end{equation}
Now, for all $\alpha\in[0,1]$, invoking~\eqref{eq:La1iii},
\eqref{eq:f'd+d'Hd<=0}, and 
the fact that $H\succeq\bzero$, we can write
\begin{eqnarray*}
f(\bx)-f(\bx+\alpha\Dbx) 
= -\alpha\nabla f(\bx)^T\Dbx - \frac{\alpha^2}{2}{\Dbx}^T H \Dbx
& \geq \alpha  \left(-\nabla f(\bx)^T\Dbx - \frac{1}{2}{\Dbx}^T H \Dbx\right) \\
\geq \omega\alpha \left(-\nabla f(\bx)^T\Dbxa
- \frac{1}{2}{(\Dbxa)}^T H \Dbxa \right) \\
= \frac{\omega\alpha}{2}\left(-\nabla f(\bx)^T\Dbxa - \left(\nabla f(\bx)^T\Dbxa
+ {(\Dbxa)}^T H \Dbxa\right) \right)
& \geq  \frac{\alpha\omega}{2} \left(-\nabla f(\bx)^T\Dbxa\right) \\
\geq \frac{\alpha\omega}{2}
\left(-\nabla f(\bx)^T\Dbxa -\frac{\alpha}{2}{(\Dbxa)}^T H\Dbxa \right)
& = \frac{\omega}{2}(f(\bx)-f(\bx+\alpha\Dbx^\aff))\:,
\end{eqnarray*}
proving~\eqref{eq:prop_desc_mpc}.
Finally, since $\omega>0$, \eqref{eq:prop_desc_mpc2} is a direct consequence 
of~\eqref{eq:prop_desc_mpc} and~\eqref{eq:desc_aff}.
\end{proof}
Given that the iterates are primal-feasible, an immediate 
consequence of Proposition~\ref{proposition:desc_mpc} is that
the primal sequence is bounded.
\begin{lemma} 
Suppose Assumption~\ref{assum:non_empty_bdd_sol} holds. 
Then $\{\bx^k\}$ is bounded.
\label{lemma:bdd_seq_x}
\end{lemma}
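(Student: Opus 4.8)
The plan is to combine two ingredients: Algorithm~\ref{algo:CR-MPC} forces strict descent of the primal objective $f$ along its (primal-feasible) iterates, and, under Assumption~\ref{assum:non_empty_bdd_sol}, every sublevel set of $f$ over the feasible polyhedron $\cF_P$ is bounded. Together these confine $\{\bx^k\}$ to a single fixed bounded set, which gives the claim.

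For the descent ingredient, I would note that since the iteration never stops, $\nabla f(\bx^k)\neq\bzero$, and hence $\Dbxak\neq\bzero$, for all $k$ (the normal matrix $M_{(Q_k)}=W_k+\sum_{i\in Q_k}\frac{\lambda_i}{s_i}\ba_i\ba_i^T\succeq W_k\succ\bzero$ is invertible in~\eqref{eq:normal_sys_CR_P_x}). Moreover $W_k=H+\varrho_k R$ satisfies $W_k\succeq H$ and $W_k\succ\bzero$ (write $W_k=(1-\varrho_k)H+\varrho_k(H+R)$, with $\varrho_k\in(0,1]$, $H\succeq\bzero$, $H+R\succ\bzero$), so Proposition~\ref{proposition:desc_mpc} applies; in particular~\eqref{eq:prop_desc_mpc2} with $\alpha=\alpha_\prim\in(0,1]$ gives $f(\bx^{k+1})<f(\bx^k)$. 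Combining this with primal feasibility of the iterates ($\bs^k=A\bx^k-\bb>\bzero$), we get $\bx^k\in L:=\{\bx\in\cF_P:f(\bx)\leq f(\bx^0)\}$ for every $k$.

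It then remains to show that $L$ is bounded, which I expect to be the only nontrivial point. Since $L$ is closed and convex, it suffices to check that its recession cone is $\{\bzero\}$. If $\bd$ were a nonzero recession direction of $L$, then, fixing any $\bx^*\in\cF_P^*$ (nonempty by Assumption~\ref{assum:non_empty_bdd_sol}), the ray $\{\bx^*+t\bd:t\geq0\}$ would lie in $L$; since $H\succeq\bzero$, the quadratic $t\mapsto f(\bx^*+t\bd)$ stays bounded above only if $\bd^TH\bd=0$, i.e.\ $H\bd=\bzero$, whence $f(\bx^*+t\bd)=f(\bx^*)+t\,\bc^T\bd$. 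Optimality of $\bx^*$ over $\cF_P$ forces $\bc^T\bd\geq0$, while boundedness of the ray forces $\bc^T\bd\leq0$, so $f$ is constant along the ray and the entire ray lies in $\cF_P^*$, contradicting boundedness of $\cF_P^*$. (This boundedness of sublevel sets of a convex quadratic over a polyhedron whose minimizer set is nonempty and bounded is classical and could instead simply be quoted.) Thus $L$, and therefore $\{\bx^k\}\subseteq L$, is bounded.
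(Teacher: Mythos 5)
Your proof is correct and takes essentially the same route as the paper: the paper treats this lemma as an immediate consequence of Proposition~\ref{proposition:desc_mpc} (monotone descent of $f$) together with primal feasibility of the iterates, which confines $\{\bx^k\}$ to the sublevel set $\{\bx\in\cF_P : f(\bx)\leq f(\bx^0)\}$, bounded under Assumption~\ref{assum:non_empty_bdd_sol}. Your recession-cone argument merely spells out the classical boundedness of that sublevel set (and your unstated claim $\varrho_k>0$ is indeed valid, since $\bs^k>\bzero$ and $\bslambda^k>\bzero$ force $E_k>0$), so nothing is missing.
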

We are now ready to prove a key result, relating two successive
iterates, that plays a central role in the remainder of the proof of
Theorem~\ref{thm:convergence}.
\begin{proposition} 
Suppose Assumptions~\ref{assum:non_empty_bdd_sol} and \ref{assum:lin_ind_act} hold,
and either $\{(\bx^k,\bslambda^k)\}$ is bounded away from $\cF^*$, or
Assumption \ref{assum:singleton_sol+strict_complementary} also holds and
$\{\bx^k\}$ converges to the unique primal solution $\bx^*$.
Let $K$ be an infinite index set such that 
\begin{equation}
\left(\inf_{k\in K} \{\chi_{k-1}\}=\right)~
\inf\left\{\|\Dbx^{\aff,k-1}\|^\nu 
+ \|[\tbl^{\aff,k}_{Q_{k-1}}]_{-}\|^\nu \colon k\in K\right\}>0\:.
\label{eq:lemma_dx_to_zero_assumption}
\end{equation}
Then $\{\Dbxk\}\to\bzero$ as $k\to\infty$, $k\in K$.
\label{prop:dx_to_zero}
\end{proposition}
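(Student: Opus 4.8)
The plan is to argue by contradiction. Suppose $\{\Dbxk\}$ does not converge to $\bzero$ along $K$; shrinking $K$ we may assume $\|\Dbxk\|\ge\epsilon$ on $K$ for some $\epsilon>0$, hence, by the two-sided bound $(1-\tau)\|\Dbxak\|\le\|\Dbxk\|\le(1+\tau)\|\Dbxak\|$ from the proof of Lemma~\ref{lemma:dx_cvge}, also $\|\Dbxak\|\ge\epsilon/(1+\tau)$ on $K$. The first point to extract is that hypothesis~\eqref{eq:lemma_dx_to_zero_assumption} forces a \emph{uniform positive lower bound on the dual iterates along $K$}: since $\chi_{k-1}\ge\bar\chi>0$ for $k\in K$, we have $\min\{\chi_{k-1},\underline\lambda\}\ge\min\{\bar\chi,\underline\lambda\}=:c_\lambda>0$, so~\eqref{eq:update_lambda_Q}--\eqref{eq:update_lambda_notQ} give $\lambda_i^k\ge c_\lambda$ for all $i\in\bm$ and all $k\in K$. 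With $\lambda_i^k\le\lambda^{\max}$ this confines $\{\bslambda^k\}_{k\in K}$ to a compact set, while $\{\bx^k\}$ is bounded by Lemma~\ref{lemma:bdd_seq_x}; shrinking $K$ once more, $(\bx^k,\bslambda^k)\to(\bx',\bslambda')$ with $\bx'\in\cF_P$ and $c_\lambda\bone\le\bslambda'\le\lambda^{\max}\bone$. Condition~\ref{cond:working_set_GC} then yields $\cA(\bx')\subseteq Q_k$ for all large $k\in K$: via part~(i) in the first alternative of the hypothesis, and via part~(ii), with $\bx'=\bx^*$, in the second.

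Next I would exploit monotone descent. By~\eqref{eq:prop_desc_mpc2} the sequence $\{f(\bx^k)\}$ is nonincreasing, and it is bounded below since $\cF_P^*\neq\emptyset$ (Assumption~\ref{assum:non_empty_bdd_sol}), so $f(\bx^k)-f(\bx^{k+1})\to0$. Because $\bx^{k+1}=\bx^k+\alpha_\prim^k\Dbxk$ with $\alpha_\prim^k\in(0,1]$, the intermediate estimate in the chain of inequalities in the proof of Proposition~\ref{proposition:desc_mpc}, together with~\eqref{eq:nablafTDx} and $M_{(Q_k)}\Dbxak=-\nabla f(\bx^k)$, gives
\begin{equation*}
f(\bx^k)-f(\bx^{k+1})\ \ge\ \tfrac{\omega\alpha_\prim^k}{2}\bigl(-\nabla f(\bx^k)^T\Dbxak\bigr)\ =\ \tfrac{\omega\alpha_\prim^k}{2}\,(\Dbxak)^TM_{(Q_k)}\Dbxak\ \ge\ 0 .
\end{equation*}
Hence $\alpha_\prim^k\,(\Dbxak)^TM_{(Q_k)}\Dbxak\to0$ as $k\to\infty$, $k\in K$.

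The core of the argument is then to bound $(\Dbxak)^TM_{(Q_k)}\Dbxak$ away from zero along $K$. In the first alternative of the hypothesis, $\{(\bx^k,\bslambda^k)\}$ is bounded away from $\cF^*$, so the limit $(\bx',\bslambda')$ --- feasible and with $\bslambda'>\bzero$ --- is not in $\cF^*$, whence $E(\bx',\bslambda')>0$, so $\varrho_k=\min\{1,E_k/\bar E\}\ge\bar\varrho>0$ for large $k\in K$; since $M_{(Q_k)}\succeq W_k=H+\varrho_k R\succeq\varrho_k(H+R)\succeq\bar\varrho(H+R)$, we get $(\Dbxak)^TM_{(Q_k)}\Dbxak\ge\bar\varrho\,\lambda_{\min}(H+R)\,(\epsilon/(1+\tau))^2>0$, together with a uniform bound on $\|M_{(Q_k)}^{-1}\|$ and hence boundedness of $\{\Dbxk\}$ along $K$; in particular $\alpha_\prim^k\to0$. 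In the second alternative (Assumption~\ref{assum:singleton_sol+strict_complementary} holds and $\bx^k\to\bx^*$) I would instead decompose $\Dbxak=M_{(Q_k)}^{-1}(-\nabla f(\bx^k))$ along $\ker A_{\cA(\bx^*)}$ and its orthogonal complement: on the kernel --- which under strict complementarity coincides with the critical cone --- the second-order sufficient condition makes $W_k\succeq H$ uniformly positive definite, while the complementary part is damped by the terms $\tfrac{\lambda_i^k}{s_i^k}\ba_i\ba_i^T$, $i\in\cA(\bx^*)\subseteq Q_k$, whose coefficients tend to $+\infty$ (as $\lambda_i^k\ge c_\lambda$ and $s_i^k\to0$) while $-\nabla f(\bx^k)\to-A^T\bslambda^*\in\mathrm{range}(A_{\cA(\bx^*)}^T)$; this gives $\Dbxak\to0$, already contradicting $\|\Dbxak\|\ge\epsilon/(1+\tau)$. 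Thus it remains only to reach a contradiction in the first alternative.

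There, $\alpha_\prim^k\to0$ along $K$, so $\bar\alpha_\prim^k\to0$ (as $\alpha_\prim^k\ge\varkappa\bar\alpha_\prim^k$); hence for large $k$ some constraint blocks the primal step, and, shrinking $K$ once more, this is a fixed index $i_0$ with $\Delta s_{i_0}^k<0$ and $\bar\alpha_\prim^k=s_{i_0}^k/|\Delta s_{i_0}^k|$. Since $\{\Dbxk\}$ is bounded, $\Delta s_{i_0}^k=\ba_{i_0}^T\Dbxk$ is bounded, so $s_{i_0}^k\to0$, i.e.\ $i_0\in\cA(\bx')\subseteq Q_k$ for large $k$. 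I would then combine: the bound $(\Dbxak)^TM_{(Q_k)}\Dbxak\le c_2$ just obtained, which forces $(\ba_{i_0}^T\Dbxak)^2\le c_2 s_{i_0}^k/\lambda_{i_0}^k\le(c_2/c_\lambda)s_{i_0}^k\to0$, so $\Delta s_{i_0}^{\aff,k}\to0$; the safeguards $\gamma_k\|\Dbxck\|\le\tau\|\Dbxak\|$ and $\gamma_k\sigma_k\mu_{(Q_k)}\le\tau\|\Dbxak\|$ from~\eqref{eq:mixing_para}; the explicit form~\eqref{eq:normal_sys_CR}--\eqref{eq:tblq} of the search direction at the indices in $\cA(\bx')$; and Lemma~\ref{lemma:step_size_bound} with $\cA=\cA(\bx')$ --- to show that $\bar\alpha_\prim^k$ is, in fact, bounded away from zero (equivalently, that the nonnegative quantity $(\Dbxk)^TM_{(Q_k)}\Dbxk$ would otherwise be driven to $-\infty$), the desired contradiction. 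I expect this last step to be the main obstacle: it requires a careful accounting of how the affine and corrector pieces of the direction behave at a constraint that is becoming active, and it is precisely where the decision not to restrict the primal step size (unlike~\cite[Lemma~3.7]{WNTO-2012}; cf.\ footnote~\ref{foonote:gamma}(iii)) demands the extra effort.
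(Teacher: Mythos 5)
Your outer architecture mirrors the paper's: use \eqref{eq:lemma_dx_to_zero_assumption} and the update rules \eqref{eq:update_lambda_Q}--\eqref{eq:update_lambda_notQ} to keep $\{\bslambda^k\}_{k\in K}$ componentwise bounded away from zero, extract a convergent subsequence, invoke Condition~\ref{cond:working_set_GC} to get $\cA(\bx^\prime)\subseteq Q_k$, and turn monotone descent into a statement about the primal step size. Your direct treatment of the second alternative is a genuinely different and completable argument (eventually $M_{(Q_k)}\succeq H+A_{\cA(\bx^*)}^TA_{\cA(\bx^*)}\succ\bzero$, so $\Dbxak$ is bounded, the quadratic-form identity forces $A_{\cA(\bx^*)}\Dbxak\to\bzero$ and $(\Dbxak)^TH\Dbxak\to\bzero$, and the second-order sufficient condition then gives $\Dbxak\to\bzero$), bypassing the paper's Jacobian-continuity route. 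But the first alternative---which is the heart of the proposition---is not proved: you reduce it to showing $\bar\alpha_\prim^k$ is bounded away from zero and then stop, listing ingredients and acknowledging that this is the main obstacle. That is exactly the technical core of the paper's proof, carried out there by showing $\tilde\bslambda^{\prime,k+1}:=-(S^k)^{-1}\Lambda^k\Dbs^k$ is bounded above along the subsequence; this in turn rests on boundedness/convergence of the full primal--dual direction $(\Dbx^k,\Dbl^k_{\hat Q})$ and of $\tilde\bslambda^{k+1}_{\hat Q}$, obtained from nonsingularity of the limiting reduced Jacobian $J(\hat W,A_{\hat Q},\hat\bs_{\hat Q},\hat\bslambda_{\hat Q})$ (Assumption~\ref{assum:lin_ind_act} plus Lemma~\ref{lemma:nonsingular_J}, with $Q_k$ frozen along the subsequence). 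Nothing in your sketch supplies boundedness of the dual direction; your uniform bound on $\|M_{(Q_k)}^{-1}\|$ controls $\Dbxak$ only, while $\Dbl^k_i=-\lambda_i^k-\tfrac{\lambda_i^k}{s_i^k}\Delta s_i^k+\cdots$ involves $1/s_i^k\to\infty$ for $i\in\cA(\bx^\prime)$.

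Moreover, the specific toolkit you propose for closing that step is doubtful. Lemma~\ref{lemma:step_size_bound} requires $\lambda_i+\Delta\lambda_i>0$ for all $i\in\cA$ and $s_i+\Delta s_i>0$ for all $i\in Q\setminus\cA$; at a non-optimal limit point $\bx^\prime$ the associated multipliers can have negative components (this is precisely what Lemma~\ref{lemma:x_convergence} later exploits, with $\tilde\lambda_{i_0}^{k+1}<0$ eventually), so positivity of $\tilde\lambda_i^{k+1}$ on $\cA(\bx^\prime)$ is neither automatic nor established in your proposal. To make your plan work you would essentially have to reconstruct the paper's boundedness argument for $\tilde\bslambda^{k+1}_{\hat Q}$ via the nonsingular limiting Jacobian (or an equivalent device controlling $\Delta s_{i_0}^k/s_{i_0}^k$ from below), which is absent. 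As written, the proposal therefore has a genuine gap at the decisive step of the first alternative.
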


\begin{proof}
From Lemma~\ref{lemma:bdd_seq_x}, $\{\bx^k\}$ is bounded,
and hence so is $\{\bs^k\}$;
by construction, $\bs^k$ and $\bslambda^k$
have positive components for all $k$, and $\{\bslambda^k\}$ 
(\eqref{eq:update_lambda_Q}--\eqref{eq:update_lambda_notQ}) 
and $\{W_k\}$ are bounded.
Further, for any infinite index set $K^\prime$ such 
that~\eqref{eq:lemma_dx_to_zero_assumption} holds,
\eqref{eq:update_lambda_Q} and \eqref{eq:update_lambda_notQ}
imply that all
components of $\{\bslambda^k\}$ are bounded away from zero on $K^\prime$.
Since, in addition, $Q_k$ can take no more than finitely many
different (set) values, it follows that
there exist $\hat\bx$, $\hat\bslambda>\bzero$, $\hat W\succeq\bzero$, an index
set $\hat Q\subseteq\bm$, and some infinite index set $\hat K\subseteq K^\prime$
such that
\begin{align}
 \{\bx^k\}&\to \hat\bx \text{ as } k\to\infty\:,\: k\in \hat K\:,
\nonumber\\
 \{\bs^k\} &\to  \hat\bs:= \{A\hat\bx-\bb\}\geq\bzero \text{ as } k\to\infty\:,\: k\in \hat K\:.
\label{eq:lemma_dx_to_zero_s_cvge}\\
\{\bslambda^k\}&\to \hat\bslambda>\bzero \text{ as } k\to\infty\:,\: k\in \hat K\:,
\label{eq:lemma_dx_to_zero_lambda_cvge}\\
\{W_k\}&\to \hat W \text{ as } k\to\infty\:,\: k\in \hat K\:,\nonumber\\
 Q_k &= \hat Q\:,\: \forall k\in \hat K\:.
\label{eq:lemma_dx_to_zero_Q_star}
\end{align}
Next, under the stated assumptions, 
$J(\hat W,A_{\hat Q}, \hat\bs_{\hat Q},\hat\bslambda_{\hat Q})$ is 
non-singular.  
Indeed, if $\{(\bx^k,\bslambda^k)\}$ is bounded away from $\cF^*$, then
$E(\bx^k,\bslambda^k)$ is bounded away from zero and since 
$H+R\succ\bzero$, 
$W_k=H+\varrho_k R = H + 
\min\left\{1,\frac{E(\bx^k,\bslambda^k)}{\bar E}\right\} R$ 
is bounded away from singularity 
and the claim follows from Assumption~\ref{assum:lin_ind_act} and Lemma~\ref{lemma:nonsingular_J}.
On the other hand, if Assumption \ref{assum:singleton_sol+strict_complementary} also 
holds and $\{\bx^k\}\to\bx^*$, then the claim follows from Condition~\ref{cond:working_set_GC}(ii) and Lemma~\ref{lemma:nonsingular_J}.
As a consequence of this claim, and by continuity of $J$, it follows
from Newton-KKT systems~\eqref{eq:CR_KKT} and \eqref{eq:CR_KKT_dx} that 
there exist $\Delta\hat\bx^\aff$, $\Delta\hat\bx$, $\bar{\bslambda}^{\aff}_{\hat Q}$, $\bar\bslambda_{\hat Q}$ such that 
\begin{align}
 \{\Dbxak\}&\to \Delta\hat\bx^\aff \text{ as } k\to\infty\:,\: k\in \hat K\:,
\label{eq:lemma_dx_to_zero_Dbxak_cvge}\\
 \{\Dbx^k\}&\to \Delta\hat\bx \text{ as } k\to\infty\:,\: k\in \hat K\:,
\nonumber\\
 \{\Dbs^k\}&\to \Delta\hat\bs:=A\Delta\hat\bx \text{ as } k\to\infty\:,\: k\in \hat K\:,
\label{eq:lemma_dx_to_zero_Dbsk_cvge}\\
 \{\tilde\bslambda^{\aff,k+1}_{\hat Q}\}
&\to \bar{\bslambda}^{\aff}_{\hat Q}\text{ as } k\to\infty\:,\: k\in \hat K\:,
\label{eq:lemma_dx_to_zero_tblak_cvge}\\
 \{\tilde\bslambda^{k+1}_{\hat Q}\}
&\to \bar{\bslambda}_{\hat Q} \text{ as } k\to\infty\:,\: k\in \hat K\:,
\label{eq:lemma_dx_to_zero_tblk_cvge}
\end{align}

The remainder of the proof proceeds by contradiction.
Thus suppose that, for the infinite index set $K$ in the
statement of this lemma,
$\{\Dbxk\}\not\to\bzero$ as $k\to\infty$, $k\in K$, i.e.,
for some $K^{\prime\prime}\subseteq K$, $\|\Dbxk\|$ is bounded away
from zero on $K^{\prime\prime}$.  Use $K^{\prime\prime}$ as our 
$K^\prime$ above, so that
(since $\hat K\subseteq K^\prime$), $\|\Dbxk\|$ is
bounded away from zero on $\hat K$.
Then, in view of Lemma~\ref{lemma:dx_cvge} (w.l.o.g.),
\begin{equation}
  \inf_{k\in \hat K}\|\Dbxak\|>0\:.
\label{eq:lemma_dx_to_zero_Dbxak_notzero}
\end{equation}
In addition, we have $\cA(\hat\bx)\subseteq \hat Q$, an implication of
Condition~\ref{cond:working_set_GC}(i) 
when $\{(\bx^k,\bslambda^k)\}$ is bounded away from $\cF^*$ and of
Condition~\ref{cond:working_set_GC}(ii) when Assumption~\ref{assum:singleton_sol+strict_complementary} holds and $\{\bx^k\}$ converges to $\bx^*$.
With these facts in hand, we next show that the sequence of primal
step sizes $\{\alpha_\prim^k\}$ is bounded away from zero for $k\in \hat K$.
To this end, let us define
\begin{equation}
\tilde\bslambda^{\prime,k+1} :=-(S^{k})^{-1}\Lambda^k\Dbs^k\:,\quad \forall k\:,
\label{eq:lemma_dx_to_zero_hblk_def}
\end{equation}
so that, for all $i\in\bm$ and all $k$, 
$\tilde\lambda_i^{\prime,k+1}>0$ if and only if $\Delta s_i^k<0$,
and the primal portion of~\eqref{eq:step_size} can be 
written as
\begin{equation*}
  \begin{alignedat}{2}
  \bar{\alpha}_\prim^k &:= 
  \begin{cases}
  \infty & \text{if } \tilde\bslambda^{\prime,k+1}\leq \bzero\:,\\
    \min_i\left\{ \frac{\lambda_i^k}{\tilde\lambda_i^{\prime,k+1}} : 
     \tilde{\lambda}_i^{\prime,k+1}>0\right\} & \text{otherwise.}
  \end{cases}\\
  \alpha_\prim^k &:= \min\left\{1,\,\max\{\varkappa\bar{\alpha}_\prim,\,\bar{\alpha}_\prim-\|\Delta\bx^k\|\}\right\}\:.
  \end{alignedat}
\end{equation*}
Clearly, it is now sufficient to
show that, for all $i$, $\{\tilde{\lambda}_i^{\prime,k+1}\}$ is bounded above 
on $\hat K$.  On the one hand, 
this is clearly so for $i\not\in \hat Q$
(whence $i\not\in\cA(\hat\bx)$),
in view of~\eqref{eq:lemma_dx_to_zero_hblk_def} 
and~\eqref{eq:lemma_dx_to_zero_Dbsk_cvge},
since $\{\bslambda^k\}$ is bounded and
$\{s_i^k\}$ is bounded away from zero on $\hat K$ for $i\not\in\cA(\hat\bx)$
(from~\eqref{eq:lemma_dx_to_zero_s_cvge}).
On the other hand, 
in view of~\eqref{eq:lemma_dx_to_zero_Q_star},
subtracting~\eqref{eq:lemma_dx_to_zero_hblk_def} 
from~\eqref{eq:tblq} yields, for all $k\in \hat K$,
\begin{equation*}
\tilde\bslambda'^{k+1}_{\hat Q} = \tilde\bslambda^{k+1}_{\hat Q} - 
\gamma_k\sigma_k\mu_{({\hat Q})}^k(S_{\hat Q}^{k})^{-1}\mathbf{1} 
+ \gamma_k(S_{\hat Q}^{k})^{-1}\Delta S_{\hat Q}^{\aff,k}\Dbl_{\hat Q}^{\aff,k}\:.
\end{equation*}
From \eqref{eq:lemma_dx_to_zero_tblk_cvge}, 
$\{\tilde\bslambda^{k+1}_{\hat Q}\}$ is bounded 
on $\hat K$, and clearly the second term in the right-hand side 
of the above equation is non-positive
component-wise.  
As for the third term, the second equation in \eqref{eq:normal_sys_CR_P} gives 
$(S_{Q_k}^{k})^{-1}\Delta S_{Q_k}^{\aff,k} = (\Lambda_{Q_k}^{k})^{-1} \tilde{\Lambda}_{Q_k}^{\aff,k+1}$, 
so that we have
\begin{equation*}
\gamma_k(S_{\hat Q}^{k})^{-1}\Delta S_{\hat Q}^{\aff,k}\Dbl_{\hat Q}^{\aff,k} 
= \gamma_k(\Lambda_{\hat Q}^{k})^{-1} 
\tilde{\Lambda}_{\hat Q}^{\aff,k+1}\Dbl_{\hat Q}^{\aff,k}\:, 
\quad\forall k \in \hat K\:,
\end{equation*}
which is bounded on $\hat K$ since, 
from~\eqref{eq:lemma_dx_to_zero_lambda_cvge},
\eqref{eq:lemma_dx_to_zero_tblak_cvge}, and the
definition~\eqref{eq:tblq_tblaq_def} of $\{\tilde{\lambda}^{\aff,+}\}$,
both 
$\{\tilde{\Lambda}_{\hat Q}^{\aff,k+1}\}$ and $\{\Dbl_{\hat Q}^{\aff,k}\}$ 
are bounded, and from~\eqref{eq:lemma_dx_to_zero_lambda_cvge}, 
$\{\bslambda^k_{\hat Q}\}$ is bounded away 
from zero on $\hat K$.
Therefore, $\{\tilde{\lambda}_i^{\prime,k+1}\}$ is bounded above
on $\hat K$ for $i\in \hat Q$ as well, proving that
$\{{\alpha}_\prim^k\}$ is bounded away from zero on $\hat K$,
i.e., that there exists $\underline{\alpha}>0$ such that 
$\alpha_\prim^k>\underline{\alpha}$, for all $k\in \hat K$, as claimed.
Without loss of generality, choose $\underline{\alpha}$ in $(0,1)$.

Finally, we show that $\{f(\bx^k)\}\to -\infty$ as $k\to\infty$ on $\hat K$, 
which contradicts boundedness of $\{\bx^k\}$ (Lemma~\ref{lemma:bdd_seq_x}).
For all $k\in \hat K$, since $\Dbxak\neq \bzero$ 
(by~\eqref{eq:lemma_dx_to_zero_Dbxak_notzero}) and 
$\alpha_\prim^k\in(\underline{\alpha}, 1]$,
Proposition~\ref{proposition:desc_mpc} implies that $\{f(\bx^k)\}$ 
is monotonically decreasing and that, for all $k\in\hat K$,
\begin{equation*}
f(\bx^k+\alpha_\prim^k\Dbxak) < f(\bx^k+\underline{\alpha}\Dbxak)\:.
\end{equation*}
Expanding the right-hand side yields
\begin{align*}
 f(\bx^k+\underline{\alpha}\Dbxak)
= f(\bx^k) + \underline{\alpha}\nabla f(\bx^k)^T \Dbxak 
+ \frac{\underline\alpha^2}{2} (\Dbxak)^T H\Dbxak\\
= f(\bx^k) + \underline{\alpha} 
\left(\nabla f(\bx^k)^T \Dbxak + (\Dbxak)^T H\Dbxak\right) 
-\left(\underline\alpha - \frac{\underline\alpha^2}{2}\right)
(\Dbxak)^T H\Dbxak\:,
\end{align*}
where the sum of the last two terms tends to a strictly 
negative limit as $k\to\infty$, $k\in\hat K$.
Indeed, in view of~\eqref{eq:f'd+d'Hd<=0}, the second term is 
non-positive and (i) if $(\Delta\hat\bx^{\aff})^T H\Delta\hat\bx^{\aff}>0$,
since $\underline\alpha>\underline\alpha^2/2$, 
from~\eqref{eq:lemma_dx_to_zero_Dbxak_cvge} 
and~\eqref{eq:lemma_dx_to_zero_Dbxak_notzero},
the third
term tends to a negative limit, and (ii) if
$(\Delta\hat\bx^{\aff})^T H\Delta\hat\bx^{\aff}=0$ then the sum
of the last two terms
tends to $\underline\alpha\nabla f(\hat{\bx})^T\Delta\hat\bx^{\aff}$
which is also strictly negative in view of~\eqref{eq:f'dx<0}, 
since we either have $\hat W 
\succ\bzero$ 
(in the case that $\{(\bx^k,\bslambda^k)\}$ bounded away from $\cF^*$) 
or at least $[\hat W \,(A_{\hat{Q}})^T]$ full row rank (in 
the case that 
Assumption~\ref{assum:singleton_sol+strict_complementary} holds
and using the fact that $\cA(\hat\bx)\subseteq \hat{Q}$).
It follows that, for some $\delta>0$,
$f(\bx^k+\alpha_\prim^k\Dbxak)<f(\bx^k)-\delta$ for all $k\in\hat K$
large enough. 
Proposition~\ref{proposition:desc_mpc} (eq.~\eqref{eq:prop_desc_mpc})
then gives that 
$f(\bx^{k+1}):=f(\bx^k+\alpha_\prim^k\Dbxk)<f(\bx^k)-\frac{\omega}{2}\delta$ 
for all $k\in\hat K$ large enough, where $\omega>0$ is an algorithm
parameter. 
Since $\{f(\bx^k)\}$ is monotonically decreasing,
the proof is now complete.
\end{proof}

We now conclude the proof of Theorem~\ref{thm:convergence} via
a string of eight lemmas, each of which builds on the previous one.
First, on any subsequence, if $\{\Dbxak\}$ tends to zero,
then $\{\bx^k\}$
approaches stationary points.
(Here both $\{\tilde\bslambda^{\aff,k+1}\}$ and
$\{\tilde\bslambda^{k+1}\}$ are as defined in~\eqref{eq:tblq_tblaq_def}.)
\begin{lemma} 
Suppose that Assumption~\ref{assum:non_empty_bdd_sol} holds and
that $\{\bx^k\}$ converges to some limit point $\hat\bx$ on an 
infinite index set $K$.
If $\{\Dbxak\}$ converges to zero on $K$, then (i) $\hat\bx$ is stationary
and
\begin{equation}
\nabla f(\bx^k) - \left(A_{\cA(\hat\bx)}\right)^T\tilde\bslambda^{\aff,k+1}_{\cA(\hat\bx)}
 \to \bzero \:,
 \text{ as } k\to\infty\:,\,k\in K\:.
\label{eq:tlak_conv2}
\end{equation}
If, in addition, Assumption~\ref{assum:lin_ind_act} holds, then
(ii) $\{\tilde\bslambda^{\aff,k+1}\}$ and
$\{\tilde\bslambda^{k+1}\}$ 
converge on $K$ to $\hat\bslambda$, 
the unique multiplier associated with $\hat\bx$.
\label{lemma:stationary_multiplier}
\end{lemma}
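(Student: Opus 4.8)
The plan is to reduce everything to one algebraic identity followed by an elementary limit argument. First I would rewrite the reduced affine normal equation~\eqref{eq:normal_sys_CR_P_x}, $M_{(Q_k)}\Dbxak=-\nabla f(\bx^k)$, by substituting $M_{(Q_k)}=W_k+(A_{Q_k})^TS_{Q_k}^{-1}\Lambda_{Q_k}A_{Q_k}$ and using~\eqref{eq:tblaq} (which gives $(A_{Q_k})^TS_{Q_k}^{-1}\Lambda_{Q_k}A_{Q_k}\Dbxak=-(A_{Q_k})^T\tilde\bslambda^{\aff,k+1}_{Q_k}$), together with the fact that $\tilde\lambda_i^{\aff,k+1}=0$ for $i\in\Qc_k$, so that $(A_{Q_k})^T\tilde\bslambda^{\aff,k+1}_{Q_k}=A^T\tilde\bslambda^{\aff,k+1}$. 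This yields
\[
\nabla f(\bx^k)-A^T\tilde\bslambda^{\aff,k+1}=-W_k\Dbxak .
\]
Since $W_k=H+\varrho_k R$ with $\varrho_k\in(0,1]$, the matrices $W_k$ are uniformly bounded, so the right-hand side tends to $\bzero$ on $K$ as $\Dbxak\to\bzero$; hence $\nabla f(\bx^k)-A^T\tilde\bslambda^{\aff,k+1}\to\bzero$ on $K$.

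Next I would split off the active rows. Writing $\hat\bs:=A\hat\bx-\bb$, for $i\notin\cA(\hat\bx)$ we have $s_i^k\to\hat s_i>0$, so $\{s_i^k\}$ is bounded away from $0$ for $k\in K$; the $i$-th component of~\eqref{eq:tblaq} is $\tilde\lambda_i^{\aff,k+1}=-(\lambda_i^k/s_i^k)\,\ba_i^T\Dbxak$ when $i\in Q_k$ (and $\tilde\lambda_i^{\aff,k+1}=0$ otherwise), so with $\lambda_i^k\le\lambda^{\max}$ and $\Dbxak\to\bzero$ it follows that $\tilde\lambda_i^{\aff,k+1}\to0$. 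Hence $\sum_{i\notin\cA(\hat\bx)}\tilde\lambda_i^{\aff,k+1}\ba_i\to\bzero$, which combined with the previous step gives~\eqref{eq:tlak_conv2} and, by continuity of $\nabla f$, $(A_{\cA(\hat\bx)})^T\tilde\bslambda^{\aff,k+1}_{\cA(\hat\bx)}\to\nabla f(\hat\bx)$. The column space of $(A_{\cA(\hat\bx)})^T$ being closed, $\nabla f(\hat\bx)$ lies in it: there is $\hat\bslambda$ with $\hat\lambda_i=0$ for $i\notin\cA(\hat\bx)$ and $A^T\hat\bslambda=\nabla f(\hat\bx)$. Together with $\hat\bx\in\cF_P$ (a limit of strictly feasible iterates) and $\hat s_i\hat\lambda_i=0$ for all $i$ by construction, $(\hat\bx,\hat\bslambda)$ satisfies~\eqref{eq:KKT} except possibly for non-negativity of $\hat\bslambda$, i.e.\ $\hat\bx$ is stationary; this proves (i).

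For (ii) I would invoke Assumption~\ref{assum:lin_ind_act} at the stationary point $\hat\bx$: $A_{\cA(\hat\bx)}$ has full row rank, so $(A_{\cA(\hat\bx)})^T$ is injective with a bounded left inverse, whence $(A_{\cA(\hat\bx)})^T\tilde\bslambda^{\aff,k+1}_{\cA(\hat\bx)}\to\nabla f(\hat\bx)=(A_{\cA(\hat\bx)})^T\hat\bslambda_{\cA(\hat\bx)}$ forces $\tilde\bslambda^{\aff,k+1}_{\cA(\hat\bx)}\to\hat\bslambda_{\cA(\hat\bx)}$; with the off-active components tending to $0$, $\tilde\bslambda^{\aff,k+1}\to\hat\bslambda$, which is the unique multiplier at $\hat\bx$ (full row rank). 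For $\tilde\bslambda^{k+1}$ the argument is identical once one has the analogue $\nabla f(\bx^k)-A^T\tilde\bslambda^{k+1}=-W_k\Dbxk$ (from the first line of~\eqref{eq:normal_sys_CR} and~\eqref{eq:tblq}, derived exactly as above) and $\Dbxk\to\bzero$ on $K$ (Lemma~\ref{lemma:dx_cvge}). The one extra point to check is $\tilde\lambda_i^{k+1}\to0$ for $i\notin\cA(\hat\bx)$: since $\tilde\lambda_i^{k+1}=\tilde\lambda_i^{\aff,k+1}+\gamma_k\Delta\lambda_i^{\cor,k}$ on $Q_k$ and the $i$-th component of~\eqref{eq:normal_sys_CR_C} gives $\gamma_k\Delta\lambda_i^{\cor,k}=(s_i^k)^{-1}\big(-\lambda_i^k\gamma_k\,\ba_i^T\Dbxck+\gamma_k\sigma_k\mu_{(Q_k)}^k-\gamma_k\Delta s_i^{\aff,k}\Delta\lambda_i^{\aff,k}\big)$, this tends to $0$ because $s_i^k$ is bounded away from $0$, $\gamma_k\|\Dbxck\|\le\tau\|\Dbxak\|\to0$ and $\gamma_k\sigma_k\mu_{(Q_k)}^k\le\tau\|\Dbxak\|\to0$ by~\eqref{eq:mixing_para}, $\gamma_k\le1$, $\Delta s_i^{\aff,k}=\ba_i^T\Dbxak\to0$, and $\Delta\lambda_i^{\aff,k}=\tilde\lambda_i^{\aff,k+1}-\lambda_i^k$ stays bounded.

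The step I expect to be the main obstacle is precisely this last one: keeping the corrector out of the way for $\tilde\bslambda^{k+1}$. One must never need pointwise control of $\gamma_k\Delta\bslambda^{\cor,k}$ (nor of $\tilde\bslambda^{k+1}$ itself) on the \emph{active} indices, where $s_i^k\to0$ makes the factor $(s_i^k)^{-1}$ blow up; the route above sidesteps this because the active indices enter the conclusion only through the rows $A_{\cA(\hat\bx)}$, which are tamed by the identity of the first paragraph together with the full-rank hypothesis, while the off-active components --- the only ones needing direct control --- are kept small precisely by the mixing bounds $\gamma_k\|\Dbxck\|\le\tau\|\Dbxak\|$ and $\gamma_k\sigma_k\mu_{(Q_k)}^k\le\tau\|\Dbxak\|$ built into~\eqref{eq:mixing_para}.
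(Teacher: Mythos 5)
Your proposal is correct and follows essentially the same route as the paper's proof: the identity $\nabla f(\bx^k)-(A_{Q_k})^T\tilde\bslambda^{\aff,k+1}_{Q_k}=-W_k\Dbxak$ (obtained in the paper from the first block of~\eqref{eq:CR_KKT} with $W$ in place of $H$, in your write-up equivalently from the normal equations and~\eqref{eq:tblaq}), the vanishing of the inactive components of $\tilde\bslambda^{\aff,k+1}$ via boundedness of $\lambda_i^k$ and $s_i^k$ bounded away from zero, closedness/full row rank of $A_{\cA(\hat\bx)}$ for stationarity and multiplier convergence, and the analogous identity plus the mixing bounds $\gamma_k\|\Dbxck\|\leq\tau\|\Dbxak\|$, $\gamma_k\sigma_k\mu_{(Q_k)}^k\leq\tau\|\Dbxak\|$ together with Lemma~\ref{lemma:dx_cvge} to handle $\tilde\bslambda^{k+1}$. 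Your splitting of $\tilde\lambda_i^{k+1}$ into affine plus corrector parts is just a rewriting of the paper's direct use of the last equation of~\eqref{eq:normal_sys_CR}, with the same bounds.
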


\begin{proof}
Suppose $\{\bx^k\}\to\hat\bx$ on $K$ and $\{\Dbx^{\aff,k}\}\to \bzero$ on $K$. 
Let $\bs^k:=A\bx^k-\bb(>\bzero)$ for all $k\in K$ and $\hat{\bs}:=A\hat\bx-\bb(\geq\bzero)$, 
so that 
$\{\bs^k\}\to\hat{\bs}$ on $K$.
As a first step toward proving Claim~(i), we show that, for any
$i\not\in\cA(\hat\bx)$, 
$\{\tilde\lambda_i^{\aff,k+1}\}\to0$ on $K$. 
For $i\not\in\cA(\hat\bx)$, since $\hat{s}_i>0$, $\{s_i^k\}$ is bounded 
away from zero on $K$.  
Since it follows from~\eqref{eq:tblq_tblaq_def} 
and~\eqref{eq:tblaq} that, for all $k$,
\begin{equation*}
 \tilde{\lambda}_i^{\aff,k+1}=0\:,\, \forall i \not\in Q_k
 \quad\text{and}\quad
 \tilde{\lambda}_i^{\aff,k+1}=-{(s_i^k)}^{-1}\lambda_i^k \Delta s_i^{\aff,k}\:, \,\forall i\in Q_k\:,
\end{equation*}
and since $\{\lambda_i^k\}$ is bounded (by construction)
and $\Dbs^{\aff,k}=A\Dbx^{\aff,k}$ (by~\eqref{eq:normal_sys_CR_P}), we have
$\{\tilde\lambda_i^{\aff,k+1}\}\to0$ on $K$.
To complete the proof of Claim~(i), note that the first equation
of \eqref{eq:CR_KKT} (with $H$ replaced by $W$) yields
\begin{equation*}
\nabla f(\bx^k) - (A_{Q_k})^T\tilde\bslambda^{\aff,k+1}_{Q_k} = - W_k \Dbx^{\aff,k}\:.
\end{equation*}
Since (i) $\{\tilde\lambda^{\aff,k+1}_i\}\to0$ on $K$ for $i\not\in\cA(\hat\bx)$, 
(ii) $\{W_k\}$ is bounded (since $H\preceq W_k\preceq H+R$), 
(iii) $\{\Dbxak\}\to\bzero$ on $K$, 
and (iv) by definition~\eqref{eq:tblq_tblaq_def},
$\tilde{\lambda}_i^{\aff,+}=0$ for $i\in\Qc$, we conclude 
that~\eqref{eq:tlak_conv2} holds,
hence $\{\left(A_{\cA(\hat\bx)}\right)^T\tilde\bslambda^{\aff,k+1}_{\cA(\hat\bx)}\}$ 
converges (since $\nabla f(\bx^k)$ does) as $k\to\infty$, $k\in K$, to a 
point in the range of $\left(A_{\cA(\hat\bx)}\right)^T$, say 
$\left(A_{\cA(\hat\bx)}\right)^T \hat\bslambda_{\cA(\hat\bx)}$.
We get $\nabla f(\hat\bx) - \left(A_{\cA(\hat\bx)}\right)^T \hat\bslambda_{\cA(\hat\bx)}=\bzero$, 
proving Claim~(i).
Finally, Claim~(ii) follows from~\eqref{eq:tlak_conv2}, 
Assumption~\ref{assum:lin_ind_act}, and the fact
that for $i\not\in\cA(\hat\bx)$, 
$\{\tilde\lambda_i^{\aff,k+1}\}\to0$ as $k\to\infty$, $k\in K$,
noting that the same argument applies to $\{\tilde\bslambda^{k+1}\}$, 
using a modified version of~\eqref{eq:tlak_conv2}, with
$\tilde\bslambda$ replacing $\tilde\bslambda^\aff$,
obtained by starting from the first equation of~\eqref{eq:CR_KKT_dx} 
instead of that of~\eqref{eq:CR_KKT} 
and using the fact, proved next,
that $\{\tilde\lambda_i^{k+1}\}\to0$ on $K$ for all $i\not\in\cA(\hat{\bx})$.
From its definition in~\eqref{eq:tblq_tblaq_def} and the last equation 
in~\eqref{eq:normal_sys_CR}, we have that, 
for all $k$,
\begin{align*}
 \tilde{\lambda}_i^{k+1}=0\:,\quad &\forall i \not\in Q_k\:, \\
 \tilde{\lambda}_i^{k+1} = {(s_i^k)}^{-1}(-\lambda_i^k \Delta s_i^k +
\gamma_k\sigma_k\mu_{(Q_k)}^{(k)} -
\gamma_k\Delta s_i^{\aff,k} \Delta \lambda_i^{\aff,k})\:, \quad&\forall i\in Q_k\:.
\end{align*}
Since $\{\tilde\lambda_i^{\aff,k+1}\}$ converges (to zero) on $K$, 
$\{\Delta\lambda_i^{\aff,k}\}$ is bounded on $K$.  Furthermore, 
from its definition~\eqref{eq:mixing_para}--\eqref{eq:mixing_para_1}
(see also~\eqref{eq:search_dir_CR}), $\{\gamma_k\}$ is bounded and
$|\gamma_k\sigma_k\mu_{(Q_k)}^{(k)}|\leq\tau\|\Dbxak\|$ for all $k$.
Since $\Dbs^{\aff,k}=A\Dbx^{\aff,k}$ and $\Dbs^k=A\Dbx^k$, in
view of Lemma~\ref{lemma:dx_cvge},
it follows that, for $i\not\in\cA(\hat\bx)$, 
$\{\tilde\lambda_i^{k+1}\}\to0$ on $K$.
\end{proof}
Lemma~\ref{lemma:stationary_multiplier}, combined with
Proposition~\ref{prop:dx_to_zero} via a contradiction argument,
then implies that (on a subsequence), if $\{\bx^k\}$ does
not approach $\cF_P^*$, then $\{\Dbxk\}$ approaches zero.

\begin{lemma} 
Suppose that Assumptions~\ref{assum:non_empty_bdd_sol} 
and~\ref{assum:lin_ind_act} hold and that
$\{\bx^k\}$ is bounded away from $\cF_P^*$ on some infinite index set $K$.
Then $\{\Dbxk\}\to\bzero$ as $k\to\infty$, $k\in K$.
\label{lemma:dx_to_zero_2}
\end{lemma}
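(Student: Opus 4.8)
The plan is a proof by contradiction that combines the monotone descent of $f$ with Proposition~\ref{prop:dx_to_zero} and Lemma~\ref{lemma:stationary_multiplier}.

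First I would strengthen the hypothesis from ``bounded away along $K$'' to ``bounded away along the whole sequence.'' Since $\{f(\bx^k)\}$ is strictly decreasing (Proposition~\ref{proposition:desc_mpc}) and $\{\bx^k\}$ is bounded (Lemma~\ref{lemma:bdd_seq_x}), $\{f(\bx^k)\}$ converges to some $\bar f$. Any limit point $\hat\bx$ of $\{\bx^k\}$ along $K$ lies in $\cF_P\setminus\cF_P^*$ ($\cF_P$ is closed and $\{\bx^k\}$ is bounded away from $\cF_P^*$ along $K$) and satisfies $f(\hat\bx)=\bar f$, so $\bar f>\min_{\cF_P}f$; hence no subsequence of $\{\bx^k\}$ can approach $\cF_P^*$, i.e.\ $\{\bx^k\}$ is bounded away from $\cF_P^*$. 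As the set of primal components of points of $\cF^*$ is exactly $\cF_P^*$, this gives $\mathrm{dist}\bigl((\bx^k,\bslambda^k),\cF^*\bigr)\geq\mathrm{dist}(\bx^k,\cF_P^*)\geq\epsilon_0>0$ for all $k$, so $\{(\bx^k,\bslambda^k)\}$ is bounded away from $\cF^*$ --- exactly the standing hypothesis needed to invoke Proposition~\ref{prop:dx_to_zero} and Condition~\ref{cond:working_set_GC}(i).

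Next, suppose for contradiction that $\{\Dbxk\}\not\to\bzero$ on $K$. By Lemma~\ref{lemma:dx_cvge} the same fails for $\{\Dbxak\}$, so there is an infinite $K_1\subseteq K$ on which both $\|\Dbxk\|$ and $\|\Dbxak\|$ are bounded away from $0$. I would then split on the behavior of $\{\chi_{k-1}\}_{k\in K_1}$. If it stays bounded away from $0$ along some infinite $K_2\subseteq K_1$, then $\inf_{k\in K_2}\chi_{k-1}>0$, so Proposition~\ref{prop:dx_to_zero} forces $\{\Dbxk\}\to\bzero$ on $K_2$, contradicting the fact that $\|\Dbxk\|$ is bounded away from $0$ on $K_2$. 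Otherwise $\chi_{k-1}\to0$ on $K_1$; since $\chi_{k-1}=\|\Dbx^{\aff,k-1}\|^\nu+\|[\tbl^{\aff,k}_{Q_{k-1}}]_-\|^\nu$ with both summands nonnegative, both $\Dbx^{\aff,k-1}\to\bzero$ and $[\tbl^{\aff,k}_{Q_{k-1}}]_-\to\bzero$ on $K_1$. By Lemma~\ref{lemma:dx_cvge}, $\Dbx^{k-1}\to\bzero$, so $\bx^k-\bx^{k-1}=\alpha_\prim^{k-1}\Dbx^{k-1}\to\bzero$ on $K_1$; extracting $K_3\subseteq K_1$ with $\bx^{k-1}\to\hat\bx$ yields $\bx^k\to\hat\bx$ on $K_3$ as well, and $\hat\bx\in\cF_P\setminus\cF_P^*$.

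It remains to rule out this last case, which I expect to be the main obstacle. Applying Lemma~\ref{lemma:stationary_multiplier} on the index set $\{k-1:k\in K_3\}$ (on which $\bx^{k-1}\to\hat\bx$ and $\Dbx^{\aff,k-1}\to\bzero$), the point $\hat\bx$ is stationary and, by Assumption~\ref{assum:lin_ind_act}, $\{\tbl^{\aff,k}\}\to\hat\bslambda$ on $K_3$, the unique KKT multiplier at $\hat\bx$. Since $\hat\bx$ is stationary but not optimal, necessarily $\hat\bslambda\not\geq\bzero$ (otherwise $(\hat\bx,\hat\bslambda)\in\cF^*$, forcing $\hat\bx\in\cF_P^*$); choose $i_0$ with $\hat\lambda_{i_0}<0$, so complementarity gives $i_0\in\cA(\hat\bx)$. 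By Condition~\ref{cond:working_set_GC}(i), $\cA(\hat\bx)\subseteq Q_{k-1}$ for all large $k\in K_3$, so $i_0$ indexes a component of $\tbl^{\aff,k}_{Q_{k-1}}$ that converges to $\hat\lambda_{i_0}<0$; hence $\|[\tbl^{\aff,k}_{Q_{k-1}}]_-\|$ is bounded away from $0$ on $K_3$, contradicting $\chi_{k-1}\to0$. This disposes of both cases, so $\{\Dbxk\}\to\bzero$ on $K$. Apart from the routine one-step index bookkeeping relating $\chi_{k-1}$ to the data of iteration $k-1$, the genuinely delicate point is the observation that the strictly negative limiting multiplier component at the non-optimal stationary limit $\hat\bx$ survives in the $[\,\cdot\,]_-$ piece of $\chi_{k-1}$ --- this is precisely what makes the second case impossible.
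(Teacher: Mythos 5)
Your proof is correct and follows essentially the same route as the paper's: a contradiction argument that uses (the contrapositive of) Proposition~\ref{prop:dx_to_zero} to force $\chi_{k-1}\to0$ on a subsequence, deduces that $\{\bx^{k-1}\}$ and $\{\bx^k\}$ share a non-optimal limit $\hat\bx$, invokes Lemma~\ref{lemma:stationary_multiplier} there, and derives the contradiction from the sign of the limiting multiplier versus the vanishing of $[\tbl^{\aff,k}_{Q_{k-1}}]_-$. Your explicit upgrade of ``bounded away on $K$'' to boundedness away from $\cF^*$ of the whole sequence (via monotone descent) and the explicit dichotomy on $\chi_{k-1}$ merely spell out steps the paper leaves implicit, and the paper states the final contradiction in the reverse direction (nonnegative limit multiplier $\Rightarrow$ $\hat\bx$ optimal) without needing Condition~\ref{cond:working_set_GC}(i), since $\tilde\lambda_i^{\aff,k}=0$ for $i\notin Q_{k-1}$ anyway.
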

\begin{proof}
Proceeding by contradiction, let $K$ be an infinite index 
set such that $\{\bx^k\}$ is bounded away from $\cF_P^*$ on $K$ and
$\{\Dbxk\}\not\to\bzero$ as $k\to\infty$, $k\in K$. 
Then, in view of Proposition~\ref{prop:dx_to_zero} and boundedness of
$\{\bx^k\}$ (Lemma~\ref{lemma:bdd_seq_x}), there exist 
$\hat Q\subseteq\bm$, $\hat\bx\not\in \cF_P^*$, and
an infinite index set $\hat K\subseteq K$ such that 
$Q_k = \hat Q$ for all $k\in\hat K$ and
\begin{equation*}
\begin{alignedat}{2}
&\{\bx^{k}\} \to \hat\bx\:, \text{ as } k\to\infty,\: k\in \hat K\:,\\
&\{\Dbx^{\aff,k-1}\} \to \bzero\:, \text{ as } k\to\infty,\: k\in \hat K\:,\\
&\{[\tbl^{\aff,k}_{\hat Q}]_{-}\} \to \bzero\:, \text{ as } k\to\infty,\: k\in\hat K\:.
\end{alignedat}
\end{equation*}
On the other hand, from \eqref{eq:update}, \eqref{eq:search_dir_CR} 
and~\eqref{eq:mixing_para}--\eqref{eq:mixing_para_1},  
\begin{equation*}
\|\bx^k-\bx^{k-1}\|=\|\alpha_\prim^{k-1}\Dbx^{k-1}\|\leq \|\Dbx^{k-1}\| 
\leq (1+\tau) \|\Dbx^{\aff,k-1}\|\:,
\end{equation*}
which implies that $\{\bx^{k-1}\}\to\hat\bx$ as $k\to\infty$, $k\in\hat K$.
It then follows from Lemma~\ref{lemma:stationary_multiplier} that
$\hat\bx$ is stationary and that $[\tilde\bslambda^{\aff,k}_{\cA(\hat\bx)}]_+$
converges to the associated multiplier vector.  Hence the 
multipliers are non-negative,
contradicting the fact that $\hat\bx\not\in\cF_P^*$.
\end{proof}
A contradiction argument based on 
Lemmas~\ref{lemma:stationary_multiplier} and~\ref{lemma:dx_to_zero_2}
then shows that 
$\{\bx^k\}$ approaches the set of stationary points 
of~\eqref{eq:primal_cqp}.

\begin{lemma} 
Suppose Assumptions~\ref{assum:non_empty_bdd_sol} and~\ref{assum:lin_ind_act} hold.
Then the sequence $\{\bx^k\}$ approaches the set of stationary points 
of~\eqref{eq:primal_cqp}, i.e., there exists a sequence $\{\hat{\bx}^k\}$
of stationary points such that $\|\bx^k-\hat{\bx}^k\|$ goes to zero 
as $k\to\infty$.
\label{lemma:dx_to_stationary}
\end{lemma}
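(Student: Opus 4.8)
The plan is to argue by contradiction, assembling the two preceding lemmas into a routine compactness argument. The starting observation is that, since the objective of~\eqref{eq:primal_cqp} is convex and its constraints are affine, every primal solution satisfies the KKT conditions, so $\cF_P^*$ is contained in the set $\mathcal{S}$ of stationary points of~\eqref{eq:primal_cqp}. Now suppose the claim fails. Since $\{\bx^k\}$ is bounded by Lemma~\ref{lemma:bdd_seq_x}, failure means there is an $\varepsilon>0$ and an infinite index set $K$ along which the distance from $\bx^k$ to $\mathcal{S}$ stays at least $\varepsilon$; in particular, along $K$, $\{\bx^k\}$ is bounded away from $\cF_P^*$.

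I would then invoke Lemma~\ref{lemma:dx_to_zero_2} (whose hypotheses, Assumptions~\ref{assum:non_empty_bdd_sol} and~\ref{assum:lin_ind_act}, are in force) to get $\{\Dbxk\}\to\bzero$ on $K$, and Lemma~\ref{lemma:dx_cvge} to upgrade this to $\{\Dbxak\}\to\bzero$ on $K$. Using boundedness of $\{\bx^k\}$ once more, pass to an infinite $\hat K\subseteq K$ along which $\{\bx^k\}$ converges to some $\hat\bx$. Lemma~\ref{lemma:stationary_multiplier}(i) now applies on $\hat K$ --- its hypotheses being precisely Assumption~\ref{assum:non_empty_bdd_sol}, convergence of $\bx^k$ to $\hat\bx$ on $\hat K$, and $\{\Dbxak\}\to\bzero$ on $\hat K$ --- and yields that $\hat\bx$ is stationary, i.e., $\hat\bx\in\mathcal{S}$. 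But $\{\bx^k\}\to\hat\bx$ on $\hat K$ forces the distance from $\bx^k$ to $\mathcal{S}$ to tend to $0$ along $\hat K\subseteq K$, contradicting the choice of $K$. Hence the distance from $\bx^k$ to $\mathcal{S}$ tends to $0$.

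Finally I would translate ``the distance from $\bx^k$ to $\mathcal{S}$ tends to $0$'' into the form stated, namely the existence of stationary points $\hat\bx^k$ with $\|\bx^k-\hat\bx^k\|\to0$. This is immediate if one merely picks, for each $k$, a point $\hat\bx^k\in\mathcal{S}$ within $1/k$ of realizing the distance; alternatively one notes that $\mathcal{S}$ is closed --- which, under Assumption~\ref{assum:lin_ind_act}, follows from the familiar fact that along a convergent sequence of stationary points the associated unique multipliers also converge (restrict to a subsequence on which the active set is constant and use the least-squares expression for the multipliers) --- so that, $\{\bx^k\}$ being bounded, the infimum defining the distance is attained.

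No real obstacle is expected: the analytic substance already resides in Proposition~\ref{prop:dx_to_zero} and Lemmas~\ref{lemma:stationary_multiplier}--\ref{lemma:dx_to_zero_2}, and this lemma only strings them together. The two points worth an explicit word are the inclusion $\cF_P^*\subseteq\mathcal{S}$, which is what lets ``bounded away from $\mathcal{S}$'' feed into the hypothesis of Lemma~\ref{lemma:dx_to_zero_2}, and the elementary closedness of $\mathcal{S}$ needed to phrase the conclusion in terms of a sequence of stationary points.
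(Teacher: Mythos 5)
Your proposal is correct and follows essentially the same route as the paper: a contradiction argument that strings together Lemma~\ref{lemma:bdd_seq_x}, Lemma~\ref{lemma:dx_to_zero_2}, Lemma~\ref{lemma:dx_cvge}, and Lemma~\ref{lemma:stationary_multiplier}(i), with the inclusion of $\cF_P^*$ in the stationary set doing the same bridging work in both versions. You merely traverse the chain in the opposite order (apply Lemma~\ref{lemma:dx_to_zero_2} first, then conclude stationarity of a limit point), which is an equivalent reorganization rather than a different argument.
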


\begin{proof}
Proceeding by contradiction, suppose the claim does not hold,
i.e., (invoking Lemma~\ref{lemma:bdd_seq_x}) suppose $\{\bx^k\}$ converges 
to some non-stationary point $\hat\bx$ on some infinite index set $K$.
Then $\{\Dbxak\}$ does not converge to zero on $K$
(Lemma~\ref{lemma:stationary_multiplier}(i))
and nor does $\{\Dbxk\}$ (Lemma~\ref{lemma:dx_cvge}).
Since $\hat\bx$ is non-stationary, this is in contradiction with
Lemma~\ref{lemma:dx_to_zero_2}.
\end{proof}
The next technical result, proved in~\cite[Lemma~3.6]{TZ:94},
invokes analogues of Lemmas~\ref{lemma:bdd_seq_x},~\ref{lemma:dx_to_zero_2}
and~\ref{lemma:dx_to_stationary}.
\begin{lemma}
Suppose Assumptions~\ref{assum:non_empty_bdd_sol} 
and~\ref{assum:lin_ind_act} hold.
Suppose $\{\bx^k\}$ is bounded away from $\cF_P^*$. 
Let $\hat\bx$ and $\hat\bx^{\prime}$ be limit points of $\{\bx^k\}$
and let $\hat\bslambda$ and $\hat\bslambda^\prime$ be the associated 
KKT multipliers.  Then $\hat\bslambda = \hat\bslambda^\prime$.
\label{lemma:common_multiplier}
\end{lemma}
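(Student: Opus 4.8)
The plan is to reduce the statement to \cite[Lemma~3.6]{TZ:94}, whose proof relies on exactly the three structural facts already in hand: boundedness of $\{\bx^k\}$ (Lemma~\ref{lemma:bdd_seq_x}), the implication ``$\{\bx^k\}$ bounded away from $\cF_P^*$ $\Rightarrow$ $\{\Dbxk\}\to\bzero$'' (Lemma~\ref{lemma:dx_to_zero_2}), and the convergence of $\{\bx^k\}$ to the set of stationary points of~\eqref{eq:primal_cqp} (Lemma~\ref{lemma:dx_to_stationary}), together with the full-rank Assumption~\ref{assum:lin_ind_act}. First I would note that, by \eqref{eq:update}, \eqref{eq:search_dir_CR} and \eqref{eq:mixing_para}, $\|\bx^{k+1}-\bx^k\|=\alpha_\prim^k\|\Dbxk\|\leq\|\Dbxk\|$, which tends to $\bzero$ by Lemma~\ref{lemma:dx_to_zero_2}; since $\{\bx^k\}$ is bounded, its limit set $\mathcal{L}$ is nonempty, compact and \emph{connected}, and by Lemma~\ref{lemma:dx_to_stationary} every point of $\mathcal{L}$ is stationary. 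Under Assumption~\ref{assum:lin_ind_act} each stationary point $\bx$ has a unique KKT multiplier; writing $\bslambda(\bx)\in\bbR^m$ for that multiplier (zero outside $\cA(\bx)$), upper semicontinuity of $\cA(\cdot)$ and full row rank of $A_{\cA(\bx')}$ give $\bslambda(\bx)_{\cA(\bx')}=(A_{\cA(\bx')}A_{\cA(\bx')}^T)^{-1}A_{\cA(\bx')}\nabla f(\bx)$ for $\bx$ near $\bx'$, so $\bslambda(\cdot)$ is continuous on the stationary set, in particular on $\mathcal{L}$.

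The heart of the argument is then to show that $\bslambda(\cdot)$ is constant on $\mathcal{L}$. Because the iterates stay primal feasible and, by Proposition~\ref{proposition:desc_mpc} (eq.~\eqref{eq:prop_desc_mpc2}, which applies since $\Dbxak\neq\bzero$ whenever the iteration continues), $\{f(\bx^k)\}$ is strictly decreasing and bounded below by $\min_{\cF_P}f$ (finite under Assumption~\ref{assum:non_empty_bdd_sol}), it converges to some $\bar f$, so $f\equiv\bar f$ on $\mathcal{L}$. Fix $\hat\bx\in\mathcal{L}$; for $\bx$ in a small neighborhood of $\hat\bx$ one has $\cA(\bx)\subseteq\cA(\hat\bx)$, so that neighborhood of $\mathcal{L}$ is a finite union of ``pieces'' $\mathcal{L}_J:=\{\bx\in\mathcal{L}:\cA(\bx)=J\}$ with $J\subseteq\cA(\hat\bx)$ (hence $A_J$ of full row rank). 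On $\mathcal{L}_J$, stationarity reads $\nabla f(\bx)=A_J^T\bslambda(\bx)_J$, and for $\bx,\bx'\in\mathcal{L}_J$ the displacement $d:=\bx'-\bx$ satisfies $A_Jd=\bzero$; expanding $f$ along $[\bx,\bx']$ and using $f(\bx)=f(\bx')=\bar f$ together with $\nabla f(\bx)^Td=\bslambda(\bx)_J^T(A_Jd)=0$ forces $d^THd=0$, hence (as $H\succeq\bzero$) $Hd=\bzero$, hence $\bslambda(\bx')_J-\bslambda(\bx)_J=(A_JA_J^T)^{-1}A_JHd=\bzero$; thus $\bslambda(\cdot)$ is constant on each piece. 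Covering the compact $\mathcal{L}$ by finitely many such neighborhoods shows $\bslambda(\mathcal{L})$ is a \emph{finite} set; being also connected (the image of the connected $\mathcal{L}$ under the continuous map $\bslambda$), it is a single point. Calling it $\hat\bslambda$, it is the multiplier of every limit point of $\{\bx^k\}$; in particular the multiplier $\hat\bslambda$ at $\hat\bx$ equals the multiplier $\hat\bslambda'$ at $\hat\bx'$, which is the claim.

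The only delicate point is the constancy step above — that the local ``pieces'' genuinely cover $\mathcal{L}$ near each limit point, that the per-piece multiplier is well defined (this is where uniqueness via Assumption~\ref{assum:lin_ind_act} enters), and that the level-set identity $f\equiv\bar f$ on $\mathcal{L}$ is used correctly to kill the $H$-term in the multiplier increment. This bookkeeping is exactly what is carried out in the proof of \cite[Lemma~3.6]{TZ:94}; since the three ingredients its proof invokes are supplied here by Lemmas~\ref{lemma:bdd_seq_x}, \ref{lemma:dx_to_zero_2} and~\ref{lemma:dx_to_stationary} (and Assumption~\ref{assum:lin_ind_act}), I would either reproduce that argument or simply cite it.
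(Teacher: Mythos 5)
Your proposal is correct and takes essentially the same route as the paper: the paper offers no independent argument, stating only that the result is proved in~\cite[Lemma~3.6]{TZ:94} using analogues of Lemmas~\ref{lemma:bdd_seq_x}, \ref{lemma:dx_to_zero_2} and~\ref{lemma:dx_to_stationary}, which is exactly the citation you fall back on. Your self-contained reconstruction (vanishing steps give a compact connected limit set of stationary points on which $f$ is constant, the multiplier map is continuous and piecewise constant by the $A_J d=\bzero$, $d^T H d=0$ argument under Assumption~\ref{assum:lin_ind_act}, hence constant by connectedness) is a sound expansion of that citation.
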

Convergence of $\{\bx^k\}$ to $\cF_P^*$ ensues, proving
Claim~(i) of Theorem~\ref{thm:convergence}.
\begin{lemma}
Suppose Assumptions~\ref{assum:non_empty_bdd_sol}
and~\ref{assum:lin_ind_act} hold.
Then $\{\bx^k\}$ converges to $\cF_P^*$.
\label{lemma:x_convergence}
\end{lemma}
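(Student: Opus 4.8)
The plan is to prove $\mathrm{dist}(\bx^k,\cF_P^*)\to0$ by marrying the monotone primal descent enforced by the algorithm with the stationarity information already collected in Lemmas~\ref{lemma:bdd_seq_x}--\ref{lemma:common_multiplier}. First I would note that, since the iteration never stops, Step~1 guarantees $\Dbxak\neq\bzero$ for every $k$, so Proposition~\ref{proposition:desc_mpc} (eq.~\eqref{eq:prop_desc_mpc2}), applied with $\alpha=\alpha_\prim^k\in(0,1]$, gives $f(\bx^{k+1})<f(\bx^k)$ for all $k$. Because the iterates remain in $\cF_P$ and $f$ is bounded below on $\cF_P$ by $f^*:=\min_{\bx\in\cF_P}f(\bx)$ (finite under Assumption~\ref{assum:non_empty_bdd_sol}), $\{f(\bx^k)\}$ converges to some $\bar f\geq f^*$. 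By Lemma~\ref{lemma:bdd_seq_x}, $\{\bx^k\}$ is bounded, so its set $L$ of limit points is nonempty, compact, and contained in $\cF_P$; by continuity, $f(\bx')=\bar f$ for every $\bx'\in L$.

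I would then split on $\bar f$. If $\bar f=f^*$, then every $\bx'\in L$ lies in $\cF_P$ with $f(\bx')=f^*$, i.e.\ $L\subseteq\cF_P^*$; a bounded sequence all of whose limit points lie in a given set converges to that set, so the claim follows. So suppose $\bar f>f^*$, which forces $L\cap\cF_P^*=\emptyset$; since $L$ and $\cF_P^*$ are disjoint compact sets, $\{\bx^k\}$ is then bounded away from $\cF_P^*$, and hence (the $\bx$-component of every point of $\cF^*$ lies in $\cF_P^*$) $\{(\bx^k,\bslambda^k)\}$ is bounded away from $\cF^*$. The task is to derive a contradiction from this.

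Under ``bounded away from $\cF_P^*$'', Lemma~\ref{lemma:dx_to_zero_2} (with $K$ the full iteration sequence) gives $\{\Dbxk\}\to\bzero$, hence $\{\Dbxak\}\to\bzero$ by Lemma~\ref{lemma:dx_cvge}; Lemma~\ref{lemma:dx_to_stationary} together with boundedness makes every $\bx'\in L$ stationary; and Lemma~\ref{lemma:common_multiplier} shows all points of $L$ share one KKT multiplier $\hat\bslambda$. Applying Lemma~\ref{lemma:stationary_multiplier}(ii) along any subsequence (pass first to a sub-subsequence along which $\bx^k$ converges into $L$), and using boundedness of $\{\tilde\bslambda^{\aff,k+1}\}$ and $\{\tilde\bslambda^{k+1}\}$, both of these sequences converge (along the full sequence) to $\hat\bslambda$. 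If $\hat\bslambda\geq\bzero$, then $(\bx',\hat\bslambda)$ solves~\eqref{eq:KKT} for every $\bx'\in L$, so $L\subseteq\cF_P^*$, a contradiction; hence $\hat\lambda_{i_0}<0$ for some $i_0\in\bm$, and by complementary slackness $i_0\in\cA(\bx')$ for each $\bx'\in L$, so $s_{i_0}^k\to0$.

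The remaining step --- contradicting the existence of such a persistently negative multiplier component --- is where I expect the real difficulty, and is the reason Assumption~\ref{assum:lin_ind_act} is needed. The idea is to show $s_{i_0}^k$ is eventually increasing, contradicting $s_{i_0}^k\to0$. Since $\Dbxak\to\bzero$ and $[\tilde\bslambda^{\aff,k+1}]_-\to[\hat\bslambda]_-\neq\bzero$ (with the negative components eventually inside $Q_k$ by Condition~\ref{cond:working_set_GC}(i)), $\{\chi_k\}$ is bounded away from zero, so the updates~\eqref{eq:update_lambda_Q}--\eqref{eq:update_lambda_notQ} keep $\lambda_{i_0}^k$ bounded away from zero while $s_{i_0}^k\to0$. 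Using $\tilde\lambda_{i_0}^{\aff,k+1}=-(s_{i_0}^k)^{-1}\lambda_{i_0}^k\,\Delta s_{i_0}^{\aff,k}\to\hat\lambda_{i_0}<0$ (from the computation in the proof of Lemma~\ref{lemma:stationary_multiplier}; cf.~\eqref{eq:nablafTDx}), the affine part $\Delta s_{i_0}^{\aff,k}=\ba_{i_0}^T\Dbxak$ is then strictly positive for large $k$, while the corrector contribution to $\Delta s_{i_0}^k$ has magnitude at most $\tau\|\Dbxak\|$ by~\eqref{eq:mixing_para}; if that contribution does not overturn the sign, then $s_{i_0}^{k+1}=s_{i_0}^k+\alpha_\prim^k\Delta s_{i_0}^k>s_{i_0}^k$ for large $k$, the sought contradiction. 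The genuine obstacle is carrying this sign estimate through rigorously when several constraints become active at possibly different rates --- so that $\|\Dbxak\|$ need not be comparable to $\Delta s_{i_0}^{\aff,k}$ and the $\tau\|\Dbxak\|$ bound on the corrector is not a priori enough; here linear independence of $\{\ba_i:i\in\cA(\bx')\}$ (Assumption~\ref{assum:lin_ind_act}) must be used to decouple the blown-up directions of $M_{(Q_k)}$ and control the corrector term. Once this is handled, the case $\bar f>f^*$ is excluded and Lemma~\ref{lemma:x_convergence} follows.
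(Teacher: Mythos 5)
Your setup (monotone descent from Proposition~\ref{proposition:desc_mpc}, boundedness from Lemma~\ref{lemma:bdd_seq_x}, the dichotomy on the limiting objective value, and the reduction to a limit multiplier $\hat\bslambda$ with some $\hat\lambda_{i_0}<0$ via Lemmas~\ref{lemma:dx_to_zero_2}, \ref{lemma:dx_cvge}, \ref{lemma:stationary_multiplier}(ii) and~\ref{lemma:common_multiplier}) matches the paper's argument. But the step you yourself flag as ``the genuine obstacle''---showing that the full step strictly increases $s_{i_0}$---is exactly the step that is missing, and your proposed way of filling it (comparing the magnitude of the corrector contribution, bounded by $\tau\|\Dbxa\|$ through~\eqref{eq:mixing_para}, against $\Delta s_{i_0}^{\aff}$, with Assumption~\ref{assum:lin_ind_act} somehow ``decoupling'' directions) is not how the difficulty is resolved, and it is not clear it can be made to work: as you note, $\Delta s_{i_0}^{\aff,k}=\ba_{i_0}^T\Dbxak$ may be much smaller than $\|\Dbxak\|$, so no norm estimate on $\gamma\Dbxc$ settles the sign.

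The missing idea is that no magnitude comparison is needed: the sign of $\Delta s_{i_0}^k$ can be read off \emph{exactly} from the reduced normal equations. Since $\{\tilde\bslambda^{k+1}\}$ and $\{\tilde\bslambda^{\aff,k+1}\}$ both converge to $\hat\bslambda$, for large $k$ one has $\tilde\lambda_{i_0}^{k+1}<0$ and $\tilde\lambda_{i_0}^{\aff,k+1}<0$ (which also forces $i_0\in Q_k$, because these quantities are set to zero off $Q_k$). Then \eqref{eq:tblaq} gives $\Delta s_{i_0}^{\aff,k}=-(\lambda_{i_0}^k)^{-1}s_{i_0}^k\tilde\lambda_{i_0}^{\aff,k+1}>0$, and $\Delta\lambda_{i_0}^{\aff,k}<0$ since $\lambda_{i_0}^k>0$ while $\lambda_{i_0}^k+\Delta\lambda_{i_0}^{\aff,k}<0$. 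The last equation of~\eqref{eq:normal_sys_CR} (equivalently \eqref{eq:tblq}) then yields
\begin{equation*}
\Delta s_{i_0}^k=(\lambda_{i_0}^k)^{-1}\bigl(-s_{i_0}^k\tilde\lambda_{i_0}^{k+1}
+\gamma_k\sigma_k\mu^k_{(Q_k)}
-\gamma_k\,\Delta s_{i_0}^{\aff,k}\,\Delta\lambda_{i_0}^{\aff,k}\bigr),
\end{equation*}
in which every term is nonnegative and the first is strictly positive; hence $\Delta s_{i_0}^k>0$ with no estimate on the corrector at all, and $s_{i_0}^{k+1}>s_{i_0}^k>0$ for all large $k$, giving the contradiction (with $s_{i_0}^k\to0$ in your formulation, or with complementarity at stationary limit points in the paper's). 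Assumption~\ref{assum:lin_ind_act} enters only upstream, through Lemmas~\ref{lemma:stationary_multiplier}(ii) and~\ref{lemma:common_multiplier}, to guarantee a well-defined common multiplier and convergence of both multiplier-estimate sequences---not to control the corrector term. As written, your proof is therefore incomplete at its decisive step.
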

\begin{proof}
We proceed by contradiction. 
Thus suppose $\{\bx^k\}$ does not converge 
to $\cF_P^*$.  Then, since $\{\bx^k\}$ is bounded 
(Lemma~\ref{lemma:bdd_seq_x}), 
(by Proposition~\ref{proposition:desc_mpc}) $\{f(\bx^k)\}$ is 
a bounded, monotonically decreasing sequence, and
it has at least one limit point $\hat{\bx}$ that is not in $\cF_P^*$.
Hence, $f(\hat{\bx})=\inf_{k}f(\bx^k)$.
Then, by Lemmas~\ref{lemma:dx_to_zero_2} and~\ref{lemma:dx_cvge},
$\{\Dbx^k\}$ and $\{\Dbxak\}$ converge to zero 
as $k\to\infty$.
It follows from Lemmas~\ref{lemma:stationary_multiplier} 
and~\ref{lemma:common_multiplier} that all limit points 
of $\{\bx^k\}$ are stationary, and that 
both $\{\tblak\}$ and $\{\tblk\}$ converge to $\hat\bslambda$, the common KKT multiplier
vector associated to all limit points of $\{\bx^k\}$.
Since 
$\hat\bx\not\in\cF_P^*$,
there exists $i_0$ such that $\hat\lambda_{i_0}<0$,
so that, for some $\hat{k}>0$,
\begin{equation}
\tilde{\lambda}_{i_0}^{\aff,k+1}<0\:\text{ and } \tilde{\lambda}_{i_0}^{k+1}<0\:, 
\quad \forall k>\hat{k}\:,
\label{eq:tilde_lambdas}
\end{equation}
which, in view of Step~8 of the algorithm, implies that
$i_0\in Q_k$ for all $k>\hat k$.
Then \eqref{eq:tblaq} gives
\begin{equation*}
\Delta s_{i_0}^{\aff,k} = -(\lambda_{i_0}^k)^{-1} 
s_{i_0}^k\tilde{\lambda}_{i_0}^{\aff,k+1} \:,\quad\forall k>\hat k\:,
\end{equation*}
where $s_{i_0}^k>0$, $\lambda_{i_0}^k>0$ by construction. 
Thus, in view of~\eqref{eq:tilde_lambdas}, 
$\Delta s_{i_0}^{\aff,k}>0$ for all $k>\hat k$.
On the other hand, the last equation of \eqref{eq:normal_sys_CR} gives
\begin{equation}
\Delta s_{i_0}^k = (\lambda_{i_0}^k)^{-1} 
(-s_{i_0}^k\tilde{\lambda}_{i_0}^{k+1} + \gamma_k \sigma_k\mu_{(Q_k)}^k
- \gamma_k \Delta s_{i_0}^{\aff,k} \Delta \lambda_{i_0}^{\aff,k}),
\quad\forall k > \hat k \:,
\label{eq:Delta_s}
\end{equation}
where $\gamma_k\geq0$, $\sigma_k\geq0$, and $\mu_{(Q_k)}^k\geq0$ by 
construction. 
Further, for $k>\hat k$, $\Delta\lambda_{i_0}^{\aff,k}<0$ 
since $\lambda_{i_0}^k>0$ and $\tilde{\lambda}_{i_0}^{\aff,k+1}(=\lambda_{i_0}^k+\Delta\lambda_{i_0}^{\aff,k})<0$.
It follows that all terms in \eqref{eq:Delta_s} are 
non-negative and the first term is positive, 
so that $\Delta s_{i_0}^k>0$ for all $k>k^\prime$. 
Moreover, for all $k>\hat k$, we have $s_{i_0}^{k+1} = s_{i_0}^{k} + \alpha_\prim^k \Delta s_{i_0}^k > s_{i_0}^{k} > 0$, where $\alpha_\prim^k>0$ since $\bs^k > \bzero$. 
Since $\{\bs^k\}$ is bounded (Lemma~\ref{lemma:bdd_seq_x}), we then 
conclude that $\{s_{i_0}^k\}\to \hat{s}_{i_0}>0$ 
so that $\hat{s}_{i_0}\hat{\lambda}_{i_0}<0$, in contradiction with 
the stationarity of limit points.  
\end{proof}
Under strict complementarity, the next lemma then 
establishes appropriate convergence of the multipliers, setting the
stage for the proof of part (ii) of Theorem~\ref{thm:convergence}
in the following lemma.

\begin{lemma}
Suppose Assumptions~\ref{assum:non_empty_bdd_sol}
to~\ref{assum:singleton_sol+strict_complementary} hold
and let $(\bx^*,\bslambda^*)$ be the unique primal-dual solution.
Then, 
given any infinite index set $K$ such that $\{\Dbxak\}_{k\in K}\to\bzero$,
it holds that 
$\{\bslambda^{k+1}\}_{k\in K}\to\bsxi^*$, 
where $\xi_i^*:=\min\{\lambda^*_i,\lambda^{\max}\}$, for all $i\in\bm$.
\label{lemma:lambda_convergence_1}
\end{lemma}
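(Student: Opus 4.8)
The plan is to combine the global-convergence facts already in hand with the explicit form of the dual update in Step~10. First, since Assumptions~\ref{assum:non_empty_bdd_sol}--\ref{assum:singleton_sol+strict_complementary} hold we have $\cF_P^*=\{\bx^*\}$, so Lemma~\ref{lemma:x_convergence} gives $\{\bx^k\}\to\bx^*$ and hence $\{\bs^k\}\to\bs^*:=A\bx^*-\bb$ and $\{\bs^{k+1}\}\to\bs^*$; by strict complementarity $s^*_i=0<\lambda^*_i$ for $i\in\cA(\bx^*)$ and $s^*_i>0=\lambda^*_i$ for $i\notin\cA(\bx^*)$, so $\xi^*_i=\min\{\lambda^*_i,\lambda^{\max}\}$ equals $0$ for $i\notin\cA(\bx^*)$. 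Condition~\ref{cond:working_set_GC}(ii) gives $\cA(\bx^*)\subseteq Q_k$ for all large $k$. Along $K$, Lemma~\ref{lemma:dx_cvge} yields $\{\Dbxk\}_{k\in K}\to\bzero$, hence $\{\Delta\bs^k\}_{k\in K}\to\bzero$ and $\{\Delta\bs^{\aff,k}\}_{k\in K}\to\bzero$ (images under $A$), and Lemma~\ref{lemma:stationary_multiplier}(ii), applied with $\hat\bx=\bx^*$, gives $\{\tilde\bslambda^{\aff,k+1}\}_{k\in K}\to\bslambda^*$ and $\{\tilde\bslambda^{k+1}\}_{k\in K}\to\bslambda^*$. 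Since for $i\in Q_k$ one has $\lambda^k_i+\Delta\lambda^{\aff,k}_i=\tilde\lambda_i^{\aff,k+1}$ while $\tilde\lambda_i^{\aff,k+1}=0$ for $i\notin Q_k$ (see~\eqref{eq:tblq_tblaq_def}), the quantity $\chi_k$ defined in Step~10 equals $\|\Dbxak\|^\nu+\|[\tilde\bslambda^{\aff,k+1}]_-\|^\nu$, which tends to $0$ on $K$ because $\bslambda^*\geq\bzero$; consequently $\min\{\chi_k,\underline\lambda\}\to0$ on $K$.

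Second, I would show $\alpha^k_\dual\to1$ along $K$ using Lemma~\ref{lemma:step_size_bound} with $\cA:=\cA(\bx^*)$. Its hypotheses hold for large $k\in K$: $\lambda^k_i+\Delta\lambda^k_i=\tilde\lambda_i^{k+1}\to\lambda^*_i>0$ for $i\in\cA(\bx^*)$, and $s^k_i+\Delta s^k_i\to s^*_i>0$ for $i\in Q_k\setminus\cA(\bx^*)$. The indices that can appear in $Q_k\setminus\cA(\bx^*)$ lie in the fixed finite set $\bm\setminus\cA(\bx^*)$, and for each such $i$ we have $s^k_i\to s^*_i>0$ with $\Delta s^{\aff,k}_i,\Delta s^k_i\to0$, so both ratios in~\eqref{eq:step_size_bound_dual} tend to $1$; by finiteness this makes $\bar\alpha^k_\dual\geq1-o(1)$ on $K$. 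Then $\alpha^k_\dual\geq\min\{1,\bar\alpha^k_\dual-\|\Dbxk\|\}\geq1-o(1)$ while $\alpha^k_\dual\leq1$, so $\alpha^k_\dual\to1$ on $K$.

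Third, I would push these limits through~\eqref{eq:update_lambda_Q}--\eqref{eq:update_lambda_notQ} in two stages, the ordering being essential because $\mu^+_{(Q_k)}$ in~\eqref{eq:update_lambda_notQ} depends only on the $Q_k$-components of $\bslambda^{k+1}$. Stage one (indices $i\in Q_k$): writing $\lambda^k_i+\alpha^k_\dual\Delta\lambda^k_i=(1-\alpha^k_\dual)\lambda^k_i+\alpha^k_\dual\tilde\lambda_i^{k+1}$ and using $\alpha^k_\dual\to1$, $\lambda^k_i\in(0,\lambda^{\max}]$, $\tilde\lambda_i^{k+1}\to\lambda^*_i$, and $1$-Lipschitz continuity of the clipping $t\mapsto\max\{\min\{t,\lambda^{\max}\},\min\{\chi_k,\underline\lambda\}\}$, one obtains $\lambda^{k+1}_i\to\min\{\lambda^*_i,\lambda^{\max}\}=\xi^*_i$ with an error bound independent of $i$ and vanishing on $K$; hence the products $s^{k+1}_i\lambda^{k+1}_i$, $i\in Q_k$, tend to $0$ uniformly in $i$ (if $i\in\cA(\bx^*)$ then $s^{k+1}_i\to0$ with $\lambda^{k+1}_i$ bounded; if $i\notin\cA(\bx^*)$ then $\xi^*_i=0$ so $\lambda^{k+1}_i\to0$ with $s^{k+1}_i$ bounded), so $\mu^+_{(Q_k)}$, an average of such products (or $0$ when $q_k=0$), tends to $0$ on $K$. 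Stage two (indices $i\notin Q_k$): then $i\notin\cA(\bx^*)$, so $s^{k+1}_i\to s^*_i>0$ is bounded below, whence $\mu^+_{(Q_k)}/s^{k+1}_i\to0$; together with $\min\{\chi_k,\underline\lambda\}\to0$ this gives $\lambda^{k+1}_i\to0=\xi^*_i$, again with an $i$-uniform error. Combining the stages (an index $i$ may move in and out of $Q_k$, but both error estimates are uniform in $i$ and vanish on $K$) yields $\lambda^{k+1}_i\to\xi^*_i$ on $K$ for every $i$, i.e.\ $\{\bslambda^{k+1}\}_{k\in K}\to\bsxi^*$.

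I expect the bookkeeping in the third step to be the main obstacle: because $Q_k$ is not a fixed set, the two stages must be organized so that the already-controlled $Q_k$-block of $\bslambda^{k+1}$ is used to bound $\mu^+_{(Q_k)}$ before the complementary block is analyzed, and every componentwise limit must carry an error estimate uniform over $i\in\bm$ so that the conclusion survives indices switching between $Q_k$ and its complement as $k$ varies. A lesser but still necessary point is verifying that the hypotheses of Lemma~\ref{lemma:step_size_bound} hold eventually along $K$ and that finiteness of $\bm$ makes the lower bound on $\bar\alpha^k_\dual$ uniform.
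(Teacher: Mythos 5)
Your proposal is correct and follows essentially the same route as the paper's proof: show $\chi_k\to0$ and $\tilde\bslambda^{k+1}\to\bslambda^*$ via Lemma~\ref{lemma:stationary_multiplier}(ii), get $\alpha_\dual^k\to1$ from Lemma~\ref{lemma:step_size_bound} with $\cA:=\cA(\bx^*)$ (using Condition~\ref{cond:working_set_GC}(ii)), then treat the $Q_k$-components through~\eqref{eq:update_lambda_Q} and use $\mu^+_{(Q_k)}\to0$ to handle the complement through~\eqref{eq:update_lambda_notQ}. The only difference is bookkeeping: where you carry error bounds uniform in $i$ to cope with the varying $Q_k$, the paper instead partitions $K$ into the finitely many subsequences $K(Q)$ on which $Q_k$ is constant and argues on each; both devices are sound.
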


\begin{proof}
Lemma~\ref{lemma:x_convergence} guarantees that $\{\bx^k\}\to\bx^*$.
Let $K$ be an infinite index set such that $\{\Dbxak\}_{k\in K}\to\bzero$.
Then, in view of Lemma~\ref{lemma:stationary_multiplier}(ii),
$\{\tbl^{\aff,k+1}\}_{k\in K}\to\bslambda^*\geq\bzero$.  
Accordingly,
$\{\chi_k\}=\{\|\Dbxak\|^\nu+\|[\tbl^{\aff,k+1}_{Q_k}]_-\|^\nu\}\to0$ 
as $k\to\infty$, $k\in K$.  Hence, in view 
of~\eqref{eq:update_lambda_Q} and~\eqref{eq:update_lambda_notQ},
the proof will be complete if we show that 
$\{\thickbreve{\bslambda}^{k+1}\}_{k\in K}\to\bslambda^*$, 
where
\begin{equation*}
\thickbreve{\lambda}^{k+1}_i :=
 \lambda^k_i + \alpha_\dual^{k}\Delta\lambda^k_i\:,\quad i\in Q_k,
 \quand
\thickbreve{\lambda}^{k+1}_i := \mu^{k+1}_{(Q_k)}/s_i^{k+1}\:,\quad i\in\Qc_k\:,
\end{equation*}
or equivalently, 
$\{\tbl^{k+1}-\thickbreve{\bslambda}^{k+1}\}_{k\in K}\to\bzero$,
which we do now.

For every $Q\subseteq\bm$, define the index set $K(Q):=\{k\in K:Q_k=Q\}$,
and let $\cQ:=\{Q\subseteq\bm:|K(Q)|=\infty\}$.  
We first show that for all $Q\in\cQ$, 
$\{\tbl^{k+1}_{Q}-\thickbreve{\bslambda}^{k+1}_Q\}_{k\in K(Q)}\to\bzero$.
For $Q\in\cQ$, 
the definition~\eqref{eq:tblq_tblaq_def} of $\tilde\bslambda^+$ yields
\begin{equation*}
\|\tbl^{k+1}_{Q}-\thickbreve{\bslambda}^{k+1}_Q\|= (1-\alpha_\dual^k)\|\Dblk_{Q}\|,
\quad k\in K(Q)\:.
\end{equation*}
Since boundedness of $\{\bslambda^k_Q\}$ (by construction) and of
$\{\tbl^{k+1}_Q\}_{k\in K}$ (=$\{\bslambda^k_Q+\Dblk_{Q})\}_{k\in K}$)
(by Lemma~\ref{lemma:stationary_multiplier}(ii))
implies 
boundedness of $\{\Dblk_{Q}\}_{k\in K}$, we only need 
$\{\alpha_\dual^k\}_{k\in K(Q)}\to 1$ in order to guarantee that 
$\|\tbl^{k+1}_{Q}-\thickbreve{\bslambda}^{k+1}_Q\|\to0$
on $K(Q)$.  Now, 
$\{\Dbxak\}_{k\in K}\to\bzero$ implies that $\{\Dbsak\}_{k\in K}\to\bzero$,
and from Lemma~\ref{lemma:dx_cvge} that $\{\Dbxk\}_{k\in K}\to\bzero$, 
implying that $\{\Dbsk\}_{k\in K}\to\bzero$; and $\{\bx^k\}\to\bx^*$
yields
$\{\bs^k\}\to \bs^*:=A\bx^*-\bb$,
so $s_i^k+\Delta s_i^k>0$ for all $i\in\cA(\bx^*)^\c$, $k\in K$ large
enough.
Moreover, 
Assumption~\ref{assum:singleton_sol+strict_complementary}
gives $\lambda_i^*>0$ for all $i\in\cA(\bx^*)$
so that, for sufficiently large $k\in K$, 
$\tilde{\lambda}_i^{k+1}>0$ for all $i\in\cA(\bx^*)$,
and Condition~\ref{cond:working_set_GC}(ii) implies that
$\cA(\bx^*)\subseteq Q$, so Lemma~\ref{lemma:step_size_bound} applies,
with $\cA:=\cA(\bx^*)$.  It follows that
$\{\bar{\alpha}_\dual^k\}_{k\in K}\to 1$, 
since all terms on the right-hand side 
of~\eqref{eq:step_size_bound_dual} converge to one on $K$.
Thus, from the definition of $\alpha_\dual^k$ in \eqref{eq:step_size} and the fact that $\{\Dbxk\}_{k\in K}\to\bzero$,
we have $\{\alpha_\dual^k\}_{k\in K}\to 1$ indeed, establishing
that $\{\tbl^{k+1}_{Q}-\thickbreve{\bslambda}^{k+1}_Q\}_{k\in K(Q)}\to\bzero$.

It remains to show that, for all $Q\in\cQ$,
$\{\tbl^{k+1}_{\Qc}-\thickbreve{\bslambda}^{k+1}_{\Qc}\}_{k\in K(Q)}\to\bzero$.
To show this, we first note that, since $\{\chi_k\}_{k\in K}\to 0$, it
follows from~\eqref{eq:update_lambda_Q} and~\eqref{eq:update_lambda_notQ} 
and the fact established above that 
$\{\thickbreve{\bslambda}^{k+1}_Q\}_{K(Q)}\to\bslambda_Q^*$
that, for all $Q\in\cQ$, 
\begin{equation}
\{\bslambda^{k+1}_{Q}\}\to\bsxi^*_{Q}, \quad k\to\infty,~k\in K(Q).
\label{eq:lambda->xi}
\end{equation}
Next, from \eqref{eq:update_lambda_Q}, \eqref{eq:update_lambda_notQ}, 
and the definition~\eqref{eq:tblq_tblaq_def} of $\tilde\bslambda^+$, 
we have, for $Q\in\cQ$ and sufficiently large $k\in K(Q)$,
\begin{equation}
|\tilde\lambda_i^{k+1} - \thickbreve{\lambda}_i^{k+1}| = \thickbreve{\lambda}_i^{k+1}
= \frac{\mu_{(Q)}^{k+1}}{s_i^{k+1}},
\quad i\in \Qc\:.
\label{eq:lambda_not_barQ_cvgce}
\end{equation}
Clearly, since $\cA(\bx^*)\subseteq Q$, we have $s_i^*>0$ for $i\in \Qc$.
Hence, since $\{\Dbsk\}_{k\in K}\to\bzero$, $\{s_i^{k+1}\}$ is bounded 
away from zero on $K$ for $i\in \Qc$.
When $Q$ is empty, the right-hand side of \eqref{eq:lambda_not_barQ_cvgce} is 
set to zero (see definition~\eqref{eq:mu_Q} of $\mu_{(Q)}$).
When $Q$ is not empty, since $\xi_i^*=0$ whenever $\lambda_i^*=0$,
\eqref{eq:lambda->xi} and complementary slackness gives
\begin{equation*}
\left\{\mu_{(Q)}^{k+1}\right\}
=\left\{\frac{(\bs^{k+1}_{Q})^T\bslambda^{k+1}_{Q}}
{|Q|}\right\}\to \left\{\frac{(\bs^*_Q)^T \bsxi^*_Q}
{|Q|}\right\}=0, \quad k\in K(Q) \:,
\end{equation*}
and it follows from~\eqref{eq:lambda_not_barQ_cvgce} that
$\{\tbl^{k+1}_{\Qc}-\thickbreve{\bslambda}^{k+1}_{\Qc}\}_{k\in K(Q)}\to\bzero$,
completing the proof.
\end{proof}
Claim~(ii) of Theorem~\ref{thm:convergence} can now be proved.
\begin{lemma}
\label{lemma:lambda_convergence}
Suppose Assumptions~\ref{assum:non_empty_bdd_sol}
to~\ref{assum:singleton_sol+strict_complementary} hold
and let $(\bx^*,\bslambda^*)$ be the unique primal-dual solution.
Then 
$\{\tblk\}\to\bslambda^*$ and $\{\bslambda^{k}\}\to\bsxi^*$, with $\xi_i^*:=\min\{\lambda^*_i,\lambda^{\max}\}$ for all $i\in\bm$.
\end{lemma}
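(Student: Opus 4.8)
The plan is to reduce the statement to the single assertion that $\{\Dbxak\}\to\bzero$ along the entire index set, and then to invoke results already in hand. By Lemma~\ref{lemma:x_convergence} we have $\{\bx^k\}\to\bx^*$; if in addition $\{\Dbxak\}\to\bzero$, then Lemma~\ref{lemma:lambda_convergence_1} (with $K$ the full index set) gives $\{\bslambda^{k+1}\}\to\bsxi^*$, hence $\{\bslambda^k\}\to\bsxi^*$, while Lemma~\ref{lemma:stationary_multiplier}(ii) (with $K$ the full index set, $\hat\bx=\bx^*$, whose unique KKT multiplier is $\bslambda^*$) gives $\{\tblk\}\to\bslambda^*$. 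So the whole content is in proving $\{\Dbxak\}\to\bzero$.

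I would prove this by contradiction. Suppose $\|\Dbxak\|\geq\epsilon$ for all $k$ in some infinite index set $K$, with $\epsilon>0$. By the two-sided bound $(1-\tau)\|\Dbxak\|\leq\|\Dbxk\|\leq(1+\tau)\|\Dbxak\|$ established in the proof of Lemma~\ref{lemma:dx_cvge}, $\{\Dbxk\}$ is bounded away from $\bzero$ on $K$, hence does not converge to $\bzero$ on $K$. Since Assumptions~\ref{assum:non_empty_bdd_sol}--\ref{assum:singleton_sol+strict_complementary} hold and $\{\bx^k\}\to\bx^*$, Proposition~\ref{prop:dx_to_zero} is applicable, and since its conclusion fails on $K$, its hypothesis must fail too, i.e., $\inf_{k\in K}\chi_{k-1}=0$. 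Thus there is an infinite $K'\subseteq K$ with $\{\chi_{k-1}\}_{k\in K'}\to0$, so $\{\Dbx^{\aff,k-1}\}_{k\in K'}\to\bzero$. Applying Lemma~\ref{lemma:lambda_convergence_1} along the shifted set $\{k-1:k\in K'\}$ then gives $\{\bslambda^k\}_{k\in K'}\to\bsxi^*$, so $\{(\bx^k,\bslambda^k)\}_{k\in K'}\to(\bx^*,\bsxi^*)$.

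The remaining, and in my view hardest, step is to show $\{\Dbxak\}_{k\in K'}\to\bzero$, contradicting $\|\Dbxak\|\geq\epsilon$ on $K'\subseteq K$. One uses: $\cA(\bx^*)\subseteq Q_k$ for large $k$ (Condition~\ref{cond:working_set_GC}(ii)); strict complementarity, giving $\xi_i^*>0$ and $s_i^k\to0$---hence $\lambda_i^k/s_i^k\to\infty$---for $i\in\cA(\bx^*)$, and $\lambda_i^k/s_i^k\to0$ for $i\notin\cA(\bx^*)$; boundedness of $\{W_k\}$ (so $W_k\to\hat W\succeq H$ along a further subsequence); and $\nabla f(\bx^k)\to\nabla f(\bx^*)=A_{\cA(\bx^*)}^T\bslambda^*_{\cA(\bx^*)}$, which lies in the range of $A_{\cA(\bx^*)}^T$ by the KKT conditions and complementary slackness. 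Decomposing $\Dbxak=\mathbf p_k+\mathbf q_k$, with $\mathbf p_k\in\ker A_{\cA(\bx^*)}$ and $\mathbf q_k$ orthogonal to it, in $M_{(Q_k)}\Dbxak=-\nabla f(\bx^k)$, the blow-up of the active terms forces $A_{\cA(\bx^*)}\mathbf q_k\to\bzero$, hence $\mathbf q_k\to\bzero$ (full row rank of $A_{\cA(\bx^*)}$, Assumption~\ref{assum:lin_ind_act}); this also shows $\{\Dbxak\}_{k\in K'}$ is bounded, since $M_{(Q_k)}$ is eventually uniformly positive definite (the second-order sufficient condition makes $H$ positive definite on $\ker A_{\cA(\bx^*)}$, and the active terms diverge on its orthogonal complement). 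Any limit $\mathbf p^*$ of $\{\mathbf p_k\}$ then satisfies $\hat W\mathbf p^*=-\nabla f(\bx^*)$, and projecting onto $\ker A_{\cA(\bx^*)}$ gives $(\mathbf p^*)^T\hat W\mathbf p^*=0$; since $\hat W\succeq H$ is positive definite on $\ker A_{\cA(\bx^*)}$, $\mathbf p^*=\bzero$. Hence every convergent subsequence of the bounded sequence $\{\Dbxak\}_{k\in K'}$ tends to $\bzero$, so $\{\Dbxak\}_{k\in K'}\to\bzero$---the desired contradiction---and $\{\Dbxak\}\to\bzero$ follows. The matrix-theoretic core (eventual uniform positive definiteness of $M_{(Q_k)}$ and smallness of $\Dbxak$ near the primal--dual solution) is the main obstacle; the rest is bookkeeping with previously established results.
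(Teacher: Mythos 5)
Your overall skeleton is exactly the paper's: reduce everything to $\{\Dbxak\}\to\bzero$ (then invoke Lemma~\ref{lemma:stationary_multiplier}(ii) and Lemma~\ref{lemma:lambda_convergence_1}), argue by contradiction, use Lemma~\ref{lemma:dx_cvge} and the contrapositive of Proposition~\ref{prop:dx_to_zero} to extract $K'\subseteq K$ with $\{\Dbx^{\aff,k-1}\}_{k\in K'}\to\bzero$, and then Lemma~\ref{lemma:lambda_convergence_1} on the shifted set to get $\{(\bx^k,\bslambda^k)\}_{k\in K'}\to(\bx^*,\bsxi^*)$. Where you diverge is the last step. The paper finishes in two lines: it fixes $Q_k=Q$ and $W_k\to W^*\succeq H$ on $K'$, notes that $J(W_k,A_{Q},\bs^k_{Q},\bslambda^k_{Q})\to J(W^*,A_{Q},\bs^*_{Q},\bsxi^*_{Q})$, which is non-singular by Lemma~\ref{lemma:nonsingular_J} together with Assumptions~\ref{assum:lin_ind_act}--\ref{assum:singleton_sol+strict_complementary}, and then reads off $\{\Dbxak\}_{k\in K'}\to\bzero$ from continuity of the solution of the reduced system~\eqref{eq:CR_KKT} (with $W$ in place of $H$), whose right-hand side vanishes at the limit. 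Your route instead works on the normal equation $M_{(Q_k)}\Dbxak=-\nabla f(\bx^k)$, splitting $\Dbxak$ along $\ker A_{\cA(\bx^*)}$ and its orthogonal complement and exploiting the blow-up of $\lambda^k_i/s^k_i$ on the active set plus positive definiteness of $H$ on $\ker A_{\cA(\bx^*)}$ (SOSC). This is essentially a hands-on re-proof of the stability/non-singularity facts that Lemma~\ref{lemma:nonsingular_J} hands the paper for free; it is more elementary and self-contained, at the price of the uniform-positive-definiteness lemma ($H+tA_{\cA}^TA_{\cA}\succeq m_0 I$ for $t$ large) that you correctly identify as the matrix-theoretic core.

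One intermediate claim in your last step is not correct as stated: ``any limit $\mathbf p^*$ of $\{\mathbf p_k\}$ satisfies $\hat W\mathbf p^*=-\nabla f(\bx^*)$.'' The active-set contribution $\sum_{i\in\cA(\bx^*)}(\lambda_i^k/s_i^k)(\ba_i^T\Dbxak)\,\ba_i$ does not vanish in the limit; by~\eqref{eq:tblaq} it equals $-\left(A_{\cA(\bx^*)}\right)^T\tilde\bslambda^{\aff,k+1}_{\cA(\bx^*)}$ and typically tends to $-\nabla f(\bx^*)$ itself, so the limiting equation for $\mathbf p^*$ carries an extra term in the range of $\left(A_{\cA(\bx^*)}\right)^T$. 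The repair is immediate with the tools you already use: project (or take inner products with $\mathbf p_k$) \emph{before} passing to the limit. Since that extra term is orthogonal to $\ker A_{\cA(\bx^*)}$, the inactive terms tend to zero, and $\mathbf p_k^T\nabla f(\bx^k)\to(\mathbf p^*)^T\left(A_{\cA(\bx^*)}\right)^T\bslambda^*_{\cA(\bx^*)}=0$, you obtain $(\mathbf p^*)^T\hat W\mathbf p^*=0$ directly, and the rest of your argument ($\hat W\succeq H$ positive definite on $\ker A_{\cA(\bx^*)}$, hence $\mathbf p^*=\bzero$) goes through unchanged. With that correction your proof is valid, though noticeably longer than the paper's Jacobian-based finish.
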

\begin{proof}
Again, Lemma~\ref{lemma:x_convergence} guarantees that $\{\bx^k\}\to\bx^*$
and $\{\bs^k\}\to\bs^*:=A\bx^*-\bb$.
Note that if $\{\Dbxak\}\to\bzero$, the claims are immediate consequences of Lemmas~\ref{lemma:stationary_multiplier} and~\ref{lemma:lambda_convergence_1}.
We now prove by contradiction that $\{\Dbxak\}\to\bzero$.
Thus, suppose that for some infinite
index set $K$, $\inf_{k\in K}\|\Dbxak\|>0$.
Then, Lemma~\ref{lemma:dx_cvge} gives $\inf_{k\in K}\|\Dbxk\|>0$.
It follows from Proposition~\ref{prop:dx_to_zero} that, on some infinite index 
set $K^\prime\subseteq K$, $\{\Dbx^{\aff,k-1}\}\to \bzero$ 
and $\{[\tblak]_-\}\to\bzero$. 
Since $Q_k$ is selected from a finite set and $\{W_k\}$ is bounded, 
we can assume without loss of generality that $Q_k=Q$ on $K^\prime$ 
for some $Q \subseteq \bm$, and that $\{W_k\}\to W^*\succeq H$ on $K^\prime$.
%
Further, 
%
from Lemma~\ref{lemma:lambda_convergence_1}, 
$\{\bslambda^k\}_{k\in K^\prime}\to\bsxi^*$.
Therefore, $\{J(W_k,A_{Q_k},\bs_{Q_k},\bslambda_{Q_k})\}_{k\in K^\prime}
\to J(W^*,A_{Q},\bs_Q^*,\bsxi_Q^*)$, and in view of
Assumptions~\ref{assum:lin_ind_act} 
and~\ref{assum:singleton_sol+strict_complementary} and
Lemma~\ref{lemma:nonsingular_J}, $J(W^*,A_{Q},\bs_Q^*,\bsxi_Q^*)$ is
non-singular (since $(\bx^*,\bslambda^*)$ is optimal).
It follows from~\eqref{eq:CR_KKT}, with $W$ substituted for $H$,
that $\{\Dbxak\}\to\bzero$ on $K^{\prime}$, 
a contradiction, proving that
$\{\Dbxak\}\to\bzero$.
\end{proof}
%
%
Claim~(iv) of Theorem~\ref{thm:convergence} follows as well.

\begin{lemma}
Suppose Assumptions~\ref{assum:non_empty_bdd_sol}
and~\ref{assum:lin_ind_act} hold and $\varepsilon>0$.
Then Algorithm~\ref{algo:CR-MPC} terminates (in Step~1) after 
finitely many iterations.
\label{lemma:termination}
\end{lemma}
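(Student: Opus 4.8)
The plan is to argue by contradiction: assume Algorithm~\ref{algo:CR-MPC} does not stop, so that it generates infinite sequences and, in particular, the stopping test~\eqref{eq:error_term} fails at every iteration, whence $E(\bx^k,\bslambda^k)\geq\varepsilon>0$ for all $k$. The first step is to deduce from this that $\{(\bx^k,\bslambda^k)\}$ is \emph{bounded away from} $\cF^*$. This will follow because $\{\bx^k\}$ is bounded (Lemma~\ref{lemma:bdd_seq_x}) and $\{\bslambda^k\}$ is bounded by construction (every component lies in $(0,\lambda^{\max}]$), so that if $\{(\bx^k,\bslambda^k)\}$ were not bounded away from $\cF^*$ some subsequence would converge to a point of $\cF^*$ (which is nonempty by Assumption~\ref{assum:non_empty_bdd_sol} and closed); continuity of $E$, together with $E\equiv0$ on $\cF^*$, would then contradict $E(\bx^k,\bslambda^k)\geq\varepsilon$.

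The heart of the proof is the claim that $\liminf_{k\to\infty}\|\Dbxak\|=0$. I would prove this by contradiction as well: if $\|\Dbxak\|\geq\delta>0$ for all large $k$, then $\chi_k=\|\Dbxak\|^\nu+\|[\tilde\bslambda^{\aff,k+1}_{Q_k}]_-\|^\nu\geq\delta^\nu$ for all large $k$, so $\inf\{\chi_{k-1}:k\geq k_0\}>0$ for some $k_0$; since $\{(\bx^k,\bslambda^k)\}$ is bounded away from $\cF^*$, Proposition~\ref{prop:dx_to_zero} applied with $K$ equal to a tail of the integers gives $\{\Dbxk\}\to\bzero$, and then Lemma~\ref{lemma:dx_cvge} gives $\{\Dbxak\}\to\bzero$, contradicting $\|\Dbxak\|\geq\delta$. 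Hence there is an infinite index set $K$ with $\{\Dbxak\}_{k\in K}\to\bzero$; by boundedness of $\{\bx^k\}$ I may pass to a subsequence so that, in addition, $\{\bx^k\}_{k\in K}\to\hat\bx$.

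The final step assembles the earlier results. By Lemma~\ref{lemma:stationary_multiplier} (using Assumptions~\ref{assum:non_empty_bdd_sol} and~\ref{assum:lin_ind_act}), $\hat\bx$ is stationary and $\{\tilde\bslambda^{k+1}\}_{k\in K}$ converges to the unique multiplier $\hat\bslambda$ associated with $\hat\bx$. By Lemma~\ref{lemma:x_convergence}, $\{\bx^k\}\to\cF_P^*$, so $\hat\bx\in\cF_P^*$, hence $\hat\bslambda\geq\bzero$ and $(\hat\bx,\hat\bslambda)$ solves (P)--(D), so $E(\hat\bx,\hat\bslambda)=0$. Moreover, from $\{\Dbxak\}_{k\in K}\to\bzero$ and Lemma~\ref{lemma:dx_cvge}, $\{\Dbxk\}_{k\in K}\to\bzero$, whence $\bx^{k+1}-\bx^k=\alpha_\prim^k\Dbxk\to\bzero$ on $K$ and thus $\{\bx^{k+1}\}_{k\in K}\to\hat\bx$ as well. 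Since $\hat\bslambda\geq\bzero$ gives $[\hat\bslambda]_+=\hat\bslambda$, continuity of $E$ and of $[\,\cdot\,]_+$ yields $E(\bx^{k+1},[\tilde\bslambda^{k+1}]_+)\to E(\hat\bx,\hat\bslambda)=0$ as $k\to\infty$, $k\in K$. Consequently, for $k\in K$ large enough, the stopping test~\eqref{eq:error_term} is satisfied at the beginning of iteration $k+1$ and the algorithm stops there --- contradicting the standing assumption --- which completes the proof.

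As for difficulty, the routine pieces (boundedness, monotone descent, continuity of $E$) are already in hand, and the substance is packaged in Propositions~\ref{proposition:desc_mpc} and~\ref{prop:dx_to_zero} and Lemmas~\ref{lemma:stationary_multiplier} and~\ref{lemma:x_convergence}. The two points requiring care are: (a) converting $E(\bx^k,\bslambda^k)\geq\varepsilon$ into boundedness away from $\cF^*$ (a small compactness argument, and precisely the hypothesis that lets us invoke Proposition~\ref{prop:dx_to_zero}); and (b) the index bookkeeping --- the stopping test at iteration $k+1$ involves $\tilde\bslambda^{k+1}$, the output of iteration $k$, so Lemma~\ref{lemma:stationary_multiplier} must be invoked along $K$ precisely so as to deliver convergence of that quantity (and of the corresponding iterate $\bx^{k+1}$).
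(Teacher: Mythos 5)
Your proof is correct and follows essentially the same route as the paper's: a contradiction argument that reduces to the case where $\{(\bx^k,\bslambda^k)\}$ is bounded away from $\cF^*$, then uses Proposition~\ref{prop:dx_to_zero} together with Lemma~\ref{lemma:dx_cvge} to extract a subsequence along which $\Delta\bx^{\aff,k}\to\bzero$, and finally Lemmas~\ref{lemma:stationary_multiplier} and~\ref{lemma:x_convergence} (with Assumption~\ref{assum:lin_ind_act} supplying the unique nonnegative multiplier) to force the stopping test~\eqref{eq:error_term} to fire. The only difference is bookkeeping: you track the iterates forward to $(\bx^{k+1},[\tilde\bslambda^{k+1}]_+)$ after first establishing $\liminf_k\|\Delta\bx^{\aff,k}\|=0$, whereas the paper splits on limit points of $\{(\bx^k,\tilde\bslambda^k)\}$ and shifts indices backward ($k-1$, $k-2$).
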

\begin{proof}
If $\{(\bx^k,\bslambda^k)\}$ has a limit point in $\cF^*$, then $\inf_k\{E_k\}=0$,
proving the claim.
Thus, suppose that $\{(\bx^k,\bslambda^k)\}$ is bounded away from $\cF^*$.
In view of Lemmas~\ref{lemma:bdd_seq_x} and~\ref{lemma:x_convergence},
$\{\bx^k\}$ has a limit point $\bx^*\in\cF_P^*$.
Assumption~\ref{assum:lin_ind_act} then implies that there exists
a unique KKT multiplier vector $\bslambda^*\geq\bzero$ associated to $\bx^*$.
If $(\bx^*,\bslambda^*)\in\cF^*$ is a limit point of $\{(\bx^k, \tblk)\}$,
which also implies that
$\inf_k\{E(\bx^k, \tblk)\}=0$,
then in view of the stopping criterion, the claim
again follows.
Thus, further suppose that there is an infinite index set $K$
such that $\{\bx^k\}_{k\in K}\to\bx^*$,
but $\inf_{k\in K}\|\tblk-\bslambda^*\|>0$.
It then follows from Lemma~\ref{lemma:stationary_multiplier}(ii)
that $\{\Dbx^{\aff,k-1}\}_{k\in K}\not\to\bzero$, and from
Lemma~\ref{lemma:dx_cvge} that $\{\Dbx^{k-1}\}_{k\in K}\not\to\bzero$.
Proposition~\ref{prop:dx_to_zero} and Lemma~\ref{lemma:dx_cvge} then imply that
$\{\Dbx^{\aff,k-2}\}_{k\in K^\prime}\to\bzero$ and
$\{\Dbx^{k-2}\}_{k\in K^\prime}\to\bzero$
for some infinite index set $K^\prime\subseteq K$.
Next, from Lemmas~\ref{lemma:bdd_seq_x} and~\ref{lemma:x_convergence},
we have $\{\bx^{k-2}\}_{k\in K^{\prime\prime}}\to\bx^{**}\in\cF_P^*$
for some infinite index
set $K^{\prime\prime}\subseteq K^\prime$, and in view
of Lemma~\ref{lemma:stationary_multiplier}(ii)
$\{\tilde\bslambda^{k-1}\}_{k\in K^{\prime\prime}}\to\bslambda^{**}$,
where $\bslambda^{**}$ is
the KKT multiplier associated to $\bx^{**}$.
Since $\alpha_\prim^{k}\in[0,1]$ for all $k$, we also have
$\{\bx^{k-1}\}_{k\in K^{\prime\prime}}
=\{\bx^{k-2}+\alpha_\prim^{k-2}\Dbx^{k-2}\}_{k\in K^{\prime\prime}}\to\bx^{**}$,
i.e.,
$\{(\bx^{k-1},\tilde\bslambda^{k-1})\}_{k\in K^{\prime\prime}}\to(\bx^{**},\bslambda^{**})\in\cF^*$,
completing the proof. 
\end{proof}

\medskip
{\noindent\bf Proof of Theorem~\ref{thm:convergence}.}
Claim~(i) was proved in Lemma~\ref{lemma:x_convergence} and
Claim~(ii) in Lemma~\ref{lemma:lambda_convergence},
Claim~(iii) is a direct consequence of Condition~\ref{cond:working_set_GC}(ii),
and Claim~(iv) was proved in Lemma~\ref{lemma:termination}.

\medskip

{\noindent\bf Proof of Corollary~\ref{cor:exact_working_set}.}
From Theorem~\ref{thm:convergence}, 
$\{(\bx^k,\bslambda^k)\}\to(\bx^*,\bslambda^*)$, i.e., $\{E_k\}\to0$.
It follows that (i) in view of Proposition~\ref{prop:rule_cond_GC} 
and Condition~\ref{cond:working_set_GC}(ii) $Q_k\supseteq\cA(x^*)$ 
for all $k$ large enough, and (ii) in view of {\rulename}, 
$\{\delta_k\}\to0$, so that $Q_k$ eventually excludes all indexes that 
are not in $\cA(\bx^*)$. \qedsym


\section{Proof of Theorem~\ref{thm:q-quad}}
\label{appendix:LQC}

Parts of this proof are adapted from~\cite{JungThesis, TZ:94, WNTO-2012}.
Throughout, we assume that 
Assumption~\ref{assum:singleton_sol+strict_complementary} 
holds (so that Assumption~\ref{assum:non_empty_bdd_sol} also holds),
that $\varepsilon=0$ {and that the iteration never stops},
and that $\lambda^*_i<\lambda^{\max}$ for all $i$. 

Newton's method plays the central role in the local
analysis.  The following lemma
is standard or readily proved; see, e.g.,~\cite[Proposition 3.10]{TZ:94}.

\begin{lemma}
\label{coro:Newton_conv_2}
Let $\Phi\colon\bbR^{n}\to\bbR^{n}$ be twice continuously differentiable 
and let $\bt^*\in\bbR^n$ such that $\Phi(\bt^*)=\bzero$.
Suppose there exists $\rho>0$ such that $\frac{\p\Phi}{\p\bt}(\bt)$ is
non-singular for all $\bt\in B(\bt^*,\rho)$.
Define $\Delta^{\rm N}\bt$ to be the Newton increment at $\bt$, i.e.,
$\Delta^{\rm N}\bt=-\left(\frac{\p\Phi}{\p\bt}(\bt)\right)^{-1}\Phi(\bt)$.
Then, given any $c>0$, there exists $c^*>0$ such that, 
for all $\bt\in B(\bt^*,\rho)$, if $\bt^+\in\bbR^n$ 
satisfies 
\begin{equation}
\min\{|t_i^+-t_i^*|,|t_i^+-(t_i+(\Delta^{\rm N} t)_i)|\}\leq c \, 
\max\{\|\Delta^{\rm N}\bt\|^2, \|\bt-\bt^*\|^2\},\quad i=1,\ldots,n\:,
\label{eq:Newton_conv_assum_2}
\end{equation}
then
\begin{equation*}
\|\bt^+-\bt^*\|\leq c^* \|\bt-\bt^*\|^2\:.
\end{equation*}
\end{lemma}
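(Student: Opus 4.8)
The plan is to reduce the statement to the classical quadratic-convergence estimate for the exact Newton step and then to absorb the ``inexactness'' allowed by~\eqref{eq:Newton_conv_assum_2} into the constants. To begin, I would record constants available on the (compact, convex) ball $B(\bt^*,\rho)$: since $\frac{\p\Phi}{\p\bt}$ is continuous on $B(\bt^*,\rho)$ and non-singular there, the map $\bt\mapsto\|(\frac{\p\Phi}{\p\bt}(\bt))^{-1}\|$ is continuous, hence bounded, so there is $L_1>0$ with $\|(\frac{\p\Phi}{\p\bt}(\bt))^{-1}\|\leq L_1$ for all $\bt\in B(\bt^*,\rho)$; and since $\Phi$ is twice continuously differentiable, $\frac{\p\Phi}{\p\bt}$ is Lipschitz on $B(\bt^*,\rho)$, say with constant $L_2$.

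Next I would establish the standard Newton estimate for the full step. Using $\Phi(\bt^*)=\bzero$ and the definition of $\Delta^{\rm N}\bt$, write
\[
\bt+\Delta^{\rm N}\bt-\bt^*=\Bigl(\tfrac{\p\Phi}{\p\bt}(\bt)\Bigr)^{-1}\Bigl[\tfrac{\p\Phi}{\p\bt}(\bt)(\bt-\bt^*)-\bigl(\Phi(\bt)-\Phi(\bt^*)\bigr)\Bigr],
\]
and bound the bracketed term, via Taylor's theorem with integral remainder (the segment from $\bt^*$ to $\bt$ lying in $B(\bt^*,\rho)$ by convexity), in norm by $\tfrac{L_2}{2}\|\bt-\bt^*\|^2$; this gives $\|\bt+\Delta^{\rm N}\bt-\bt^*\|\leq c_1\|\bt-\bt^*\|^2$ with $c_1:=L_1L_2/2$. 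Combining with the triangle inequality and $\|\bt-\bt^*\|\leq\rho$ also yields $\|\Delta^{\rm N}\bt\|\leq(1+c_1\rho)\|\bt-\bt^*\|=:c_2\|\bt-\bt^*\|$, so the right-hand side of~\eqref{eq:Newton_conv_assum_2} is at most $c\,\bar c\,\|\bt-\bt^*\|^2$, where $\bar c:=\max\{c_2^2,1\}$.

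Finally I would combine these bounds componentwise. For each $i$, \eqref{eq:Newton_conv_assum_2} forces either $|t_i^+-t_i^*|\leq c\bar c\|\bt-\bt^*\|^2$ directly, or $|t_i^+-(t_i+(\Delta^{\rm N} t)_i)|\leq c\bar c\|\bt-\bt^*\|^2$, in which case the triangle inequality together with the full-step estimate of the previous paragraph gives $|t_i^+-t_i^*|\leq(c\bar c+c_1)\|\bt-\bt^*\|^2$; in either case $|t_i^+-t_i^*|\leq(c\bar c+c_1)\|\bt-\bt^*\|^2$, and summing over $i$ (using $\|\bv\|\leq\sqrt{n}\max_i|v_i|$) yields $\|\bt^+-\bt^*\|\leq c^*\|\bt-\bt^*\|^2$ with $c^*:=\sqrt{n}\,(c\bar c+c_1)$, which proves the claim. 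There is no genuine obstacle here — the result is classical — and the only point requiring a little care is that the majorant in~\eqref{eq:Newton_conv_assum_2} mixes $\|\Delta^{\rm N}\bt\|$ with $\|\bt-\bt^*\|$, so the quadratic Newton estimate must be invoked both to control the full-step error and to replace $\|\Delta^{\rm N}\bt\|$ by a constant multiple of $\|\bt-\bt^*\|$ before the componentwise bookkeeping closes.
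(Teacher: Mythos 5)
Your proposal is correct: the classical estimate $\|\bt+\Delta^{\rm N}\bt-\bt^*\|\leq c_1\|\bt-\bt^*\|^2$ on the compact ball, the consequent bound $\|\Delta^{\rm N}\bt\|\leq c_2\|\bt-\bt^*\|$, and the componentwise case analysis close the argument with no gaps. The paper does not prove this lemma itself---it cites it as standard (Proposition 3.10 of the reference by Tits and Zhou)---and your argument is essentially the same standard Newton-type reasoning that the cited result relies on.
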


For convenience, 
define $\bz:=(\bx, \bslambda)$ (as well as $\bz^*:=(\bx^*,\bslambda^*)$, etc.).
For $\bz\in \cF^o:=\{\bz:\bx\in\cF_P^o,\,\bslambda>\bzero\}$, define
\begin{equation*}
\varrho(\bz):=\min\left\{1,\frac{E(\bx,\bslambda)}{\bar E}\right\}
\quand W(\bz):=H+\varrho(\bz)R\:. \quad 
\end{equation*}
The gist of the remainder of this appendix is to 
apply Lemma~\ref{coro:Newton_conv_2} to 
\begin{equation*}
\Phi_Q(\bz):=\left[\begin{array}{c}
H\bx-(A_Q)^T\bslambda_Q+\bc\\
\Lambda_Q(A_Q\bx-\bb_Q)
\end{array}\right]\:, \quad Q\subseteq\bm.
\end{equation*}
(Note that $\Phi_Q(\bz^*)=\bzero$.)
Let $\bz_Q:=(\bx, \bslambda_Q)$, then the step taken on the $Q$ components along the search direction generated by the Algorithm~\ref{algo:CR-MPC} is 
analogously given by
$\thickbreve{\bz}^+_Q : = (\bx^+,\thickbreve\bslambda_Q^+)$
with $\thickbreve\bslambda_Q^+:=\bslambda_Q + \alpha_\dual\Dbl_Q$.
The first major step of the proof is achieved by
Proposition~\ref{prop:MPC_and_Newton_bound} below, where the focus is on
$\thickbreve{\bz}^+_Q$ rather than on $\bz^+$.
Thus we compare $\thickbreve{\bz}^+_Q$, with $Q\in\cQ^*$
to the $Q$ components of the (unregularized) Newton step,
i.e., $\bz_Q+(\Delta^{\rm N}\bz)_Q$.
Define
\begin{equation*}
\mathscr{A}:=\left[ \begin{array}{cc}
\alpha_\prim I_n & \bzero \\
\bzero & \alpha_\dual I_{|Q|}
\end{array} \right]\:,\quand
\alpha:=\min\{\alpha_\prim,\alpha_\dual\}\:.
\end{equation*}
The difference between the CR-MPC iteration and the Newton iteration
can be written as
\begin{equation}
\begin{alignedat}{2}
&\|\thickbreve{\bz}^+_Q-(\bz_Q+(\Delta^{\rm N}\bz)_Q)\|\\
&\leq \|\thickbreve{\bz}^+_Q-(\bz_Q+\Dbz_Q)\|+ \|\Dbz_Q-\Dbza_Q\| +
\|\Dbza_Q - \Dbz^0_Q\| + \|\Dbz^0_Q-(\Delta^{\rm N}\bz)_Q\| \\
&= \|(I-\mathscr{A})\Dbz_Q\|+ \gamma\|\Dbzc_Q\| +  \|\Dbza_Q - \Dbz^0_Q\| +
\|\Dbz^0_Q-(\Delta^{\rm N}\bz)_Q\|\\
&\leq (1-\alpha) \|\Dbz_Q\|+ \|\Dbzc_Q\| + \|\Dbza_Q - \Dbz^0_Q\| + \|\Dbz^0_Q-(\Delta^{\rm N}\bz)_Q\|\:,
\end{alignedat}
\label{eq:MPC_and_Newton}
\end{equation}
where $\Dbz_Q:=(\Dbx,\Dbl_Q)$, $\Dbza_Q:=(\Dbx^{\aff},\Dbl^\aff_Q)$, $\Dbz^\cor_Q:=(\Dbx^{\cor},\Dbl^\cor_Q)$, and $\Dbz^0_Q$ is the (constraint-reduced) affine-scaling direction for the original (unregularized) system (so $\Delta^{\rm N}\bz=\Dbz^0_\bm$).

Let
\[
J_a(W,A,\bs,\bslambda) := 
\left[\begin{array}{cc}    
W & -A^T \\    
\Lambda A   & S 
\end{array}\right]\:.
\]
The following readily proved lemma will be of help.  (For details, 
see 
Lemmas B.15 and B.16 in~\cite{JungThesis}; also Lemmas~13 
and~1 in~\cite{TAW-06})

\begin{lemma}
Let $\bs,\bslambda\in\bbR^m$ and $Q\subseteq\bm$ be arbitrary and let $W$ 
be symmetric, with $W\succeq H$.
Then (i) $J_a(W,A_Q,\bs_Q,\bslambda_Q)$ is non-singular
if and only if $J(W,A_Q,\bs_Q,\bslambda_Q)$ is, and
(ii) 
if $\cA(\bx^*)\subseteq Q$, then $J(W,A_Q,\bs_Q^*,\bslambda_Q^*)$ 
is non-singular
(and so is $J_a(W,A_Q,\bs_Q^*,\bslambda_Q^*)$).
\label{lemma:nonsingular_Ja}
\end{lemma}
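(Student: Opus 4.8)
The plan is to settle part~(i) with a single block elimination and part~(ii) by invoking Lemma~\ref{lemma:nonsingular_J}. For part~(i), I would observe that on $\ker J(W,A_Q,\bs_Q,\bslambda_Q)$ the middle block of equations reads $A_Q\Delta\bx-\Delta\bs_Q=\bzero$, i.e.\ $\Delta\bs_Q=A_Q\Delta\bx$; substituting this relation into the first and third blocks leaves exactly $J_a(W,A_Q,\bs_Q,\bslambda_Q)(\Delta\bx,\Delta\bslambda_Q)=\bzero$, while conversely any $(\Delta\bx,\Delta\bslambda_Q)\in\ker J_a(W,A_Q,\bs_Q,\bslambda_Q)$ lifts, via $\Delta\bs_Q:=A_Q\Delta\bx$, to an element of $\ker J$. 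Hence $(\Delta\bx,\Delta\bslambda_Q)\mapsto(\Delta\bx,\Delta\bslambda_Q,A_Q\Delta\bx)$ is a linear isomorphism between the two kernels; since both matrices are square, one is nonsingular iff the other is. (Equivalently, adding $\Lambda_Q$ times the second block row of $J$ to its third block row produces a matrix whose third block column is $(\bzero,-I,\bzero)^T$, so a cofactor expansion gives $\det J=\pm\det J_a$.) Note this argument uses nothing about the signs of $\bs_Q,\bslambda_Q$ or about $W$.

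For part~(ii), by part~(i) it suffices to show that $J(W,A_Q,\bs_Q^*,\bslambda_Q^*)$ is nonsingular, which I would obtain from Lemma~\ref{lemma:nonsingular_J} with $A_Q,\bs_Q^*,\bslambda_Q^*,W$ in the roles of $A,\bs,\bslambda,W$. Its standing hypotheses $s_i^*,\lambda_i^*\ge0$ and $W\succeq\bzero$ are immediate. Condition~(i) there, $s_i^*+\lambda_i^*>0$ for all $i\in Q$, holds since $s_i^*>0$ for $i\in Q\setminus\cA(\bx^*)$ while strict complementarity (Assumption~\ref{assum:singleton_sol+strict_complementary}) forces $\lambda_i^*>0$ for $i\in\cA(\bx^*)\subseteq Q$. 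Condition~(ii) asks that $A_{\{i\in Q:\,s_i^*=0\}}=A_{\cA(\bx^*)}$ have full row rank; this follows from uniqueness of $\bslambda^*$ in Assumption~\ref{assum:singleton_sol+strict_complementary} together with strict complementarity, since a nonzero element of $\ker(A_{\cA(\bx^*)})^T$ would yield a second KKT multiplier vector at $\bx^*$. Condition~(iii) holds trivially because $W\succ\bzero$ (recall $W\succeq H\succ\bzero$) already has rank $n$, so $[\,W\ \ (\cdot)^T\,]$ has full row rank. Thus $J(W,A_Q,\bs_Q^*,\bslambda_Q^*)$, and therefore by part~(i) also $J_a(W,A_Q,\bs_Q^*,\bslambda_Q^*)$, is nonsingular.

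I do not expect a genuine obstacle: this is a bookkeeping lemma whose content is entirely routine linear algebra plus a lookup of Lemma~\ref{lemma:nonsingular_J}. The only places deserving a line of care are the sign/determinant accounting in part~(i)---which the kernel-isomorphism phrasing avoids altogether---and, in part~(ii), making explicit that full row rank of $A_{\cA(\bx^*)}$ is a consequence of the standing assumptions (uniqueness of the multiplier and strict complementarity) rather than an extra hypothesis; that is the one spot where uniqueness of $\bslambda^*$ is actually used.
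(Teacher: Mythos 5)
Your part~(i) is correct (the elimination $\Delta\bs_Q=A_Q\Delta\bx$ giving a kernel isomorphism, or equivalently the determinant identity, is exactly the routine argument the paper defers to its references for), and in part~(ii) your treatment of conditions~(i) and~(ii) of Lemma~\ref{lemma:nonsingular_J} is also right; in particular you correctly recognize that, since Assumption~\ref{assum:lin_ind_act} is not in force in Appendix~\ref{appendix:LQC}, full row rank of $A_{\cA(\bx^*)}$ must be deduced from uniqueness of $\bslambda^*$ together with strict complementarity, and your perturbation argument does that.

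The gap is your disposal of condition~(iii). You assert $W\succ\bzero$ ``because $W\succeq H\succ\bzero$'', but $H$ is only positive \emph{semi}-definite in this paper ($H=\bzero$, the linear-optimization case, is explicitly allowed), and the lemma's own hypothesis is just that $W$ is symmetric with $W\succeq H$; the line in the appendix preamble you are leaning on cannot be read literally. This is not a cosmetic point: the lemma is invoked in the proof of Lemma~\ref{lemma:rho_star} at $\bz=\bz^*$, where $\varrho(\bz^*)=0$ and hence $W(\bz^*)=H$, which may be singular (identically zero in the LP case), so your argument for condition~(iii) fails precisely in the situation claim~(ii) is needed for. The missing ingredient is the second-order sufficient condition in Assumption~\ref{assum:singleton_sol+strict_complementary}: under strict complementarity and $\cA(\bx^*)\subseteq Q$, the set $\{i\in Q:\lambda_i^*\neq0\}$ is exactly $\cA(\bx^*)$, and if $\bv$ lies in the left null space of $[\,W\ \ (A_{\cA(\bx^*)})^T\,]$ then $W\bv=\bzero$ and $A_{\cA(\bx^*)}\bv=\bzero$; since $\bzero\preceq H\preceq W$, $W\bv=\bzero$ gives $\bv^TH\bv\leq\bv^TW\bv=0$, so $\bv$ is a critical direction with $\bv^TH\bv=0$, and SOSC forces $\bv=\bzero$. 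That establishes condition~(iii) under the actual hypotheses; with this repair (and only this one) your proof goes through.
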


\noindent
With $\bs:=A\bx-\bb$,
$J_a(H,A_Q,\bs_Q,\bslambda_Q)$, the system matrix for the
(constraint-reduced) original (unregularized) 
``augmented'' system, is the Jacobian of
$\Phi_Q(\bz)$, i.e.,
\begin{equation*}
J_a(H,A_Q,\bs_Q,\bslambda_Q) \Dbz^0_Q = -\Phi_Q(\bz)\:,
\end{equation*}
and its regularized version $J_a(W(\bz),A_Q,\bs_Q,\bslambda_Q)$
satisfies (among other systems solved by Algorithm~\ref{algo:CR-MPC})
\[
J_a(W(\bz),A_Q,\bs_Q,\bslambda_Q) \Dbza_Q = -\Phi_Q(\bz)\:.
\]
Next, we verify that $J_a(W(\bz),A_Q,\bs_Q,\bslambda_Q)$ is 
non-singular near $\bz^*$ (so that $\Dbz^0_Q$ and $\Dbza_Q$
in~\eqref{eq:MPC_and_Newton} are well defined)
and establish other useful local properties.  
For convenience, we define
\begin{equation*}
\cQ^*:=\{Q\subseteq\bm:\cA(\bx^*)\subseteq Q\}\:.
\end{equation*}
and
\[
\tbs^+ := \bs + \Dbs,  \qquad \tilde{\bs}^{\aff,+} := \bs + \Dbsa.
\]

\begin{lemma}

Let $\epsilon^*:=\min\{1,\min_{i\in\bm}(\lambda_i^*+s_i^*)\}$.
There exist $\rho^*>0$ and $r>0$, such that, 
for all $\bz\in \cF^o\cap B(\bz^*,\rho^*)$
and all $Q\in\cQ^*$, the following hold:
\begin{enumerate}[label=(\roman*)]
\item $\|J_a(W(\bz),A_Q,\bslambda_Q,\bs_Q)^{-1}\|\leq r$,
\item $\max\{\|\Dbza_Q\|, \|\Dbz_Q\|, \|\Dbsa_Q\|, \|\Dbs_Q\|\}<\epsilon^*/4$,
\item $\min\{\lambda_i, \tilde{\lambda}_i^{\aff,+} ,  \tilde{\lambda}_i^+ \}>\epsilon^*/2,\,\forall i\in\cA(\bx^*)$,\\
$\max\{ \lambda_i ,  \tilde{\lambda}_i^{\aff,+} ,  \tilde{\lambda}_i^+ \}<\epsilon^*/2,\,\forall i\in\bm\setminus\cA(\bx^*)$,\\
$\max\{ s_i ,  \tilde{s}_i^{\aff,+} ,  \tilde{s}_i^+ \}<\epsilon^*/2,\,\forall i\in\cA(\bx^*)$,\\
$\min\{ s_i ,  \tilde{s}_i^{\aff,+} ,  \tilde{s}_i^+ \}>\epsilon^*/2,\,\forall i\in\bm\setminus\cA(\bx^*)$.

\item $\tilde\lambda_i^+ < \lambda^{\max},\,\forall i\in\bm$.
\end{enumerate}
\label{lemma:rho_star}
\end{lemma}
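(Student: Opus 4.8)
The plan is to reduce everything to a neighbourhood argument around the unique primal--dual solution $\bz^*=(\bx^*,\bslambda^*)$, exploiting that $\cQ^*$ is a \emph{finite} collection of subsets of $\bm$ together with continuity of all the maps in play. First I would record what strict complementarity (Assumption~\ref{assum:singleton_sol+strict_complementary}) buys: $\lambda_i^*>0=s_i^*$ for $i\in\cA(\bx^*)$ and $s_i^*>0=\lambda_i^*$ for $i\in\bm\setminus\cA(\bx^*)$, so that $\epsilon^*>0$, with $\lambda_i^*\ge\epsilon^*$ on $\cA(\bx^*)$ and $s_i^*\ge\epsilon^*$ off it. For (i): the map $\bz\mapsto W(\bz)=H+\varrho(\bz)R$ is continuous (as $E$ is), with $W(\bz^*)=H$ since $E(\bx^*,\bslambda^*)=0$, hence so is $\bz\mapsto J_a(W(\bz),A_Q,\bs_Q,\bslambda_Q)$ for each fixed $Q$; by Lemma~\ref{lemma:nonsingular_Ja}(ii) applied at $\bz^*$ with $W=H\succeq H$, this matrix is nonsingular at $\bz=\bz^*$ for every $Q\in\cQ^*$. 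Taking the minimum over the finitely many $Q\in\cQ^*$ of the radii and norm bounds supplied by continuity of matrix inversion yields a single $\rho^*>0$ and $r>0$ with $\|J_a(W(\bz),A_Q,\bs_Q,\bslambda_Q)^{-1}\|\le r$ for all such $Q$ and all $\bz\in\cF^o\cap B(\bz^*,\rho^*)$.

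For (ii): since $\Phi_Q(\bz^*)=\bzero$ and $\Phi_Q$ is continuous, and since $J_a(W(\bz),A_Q,\bs_Q,\bslambda_Q)\,\Dbza_Q=-\Phi_Q(\bz)$ (as noted just above the lemma), (i) gives $\|\Dbza_Q\|\le r\|\Phi_Q(\bz)\|$, which can be forced below any prescribed tolerance by shrinking $\rho^*$; then $\|\Dbsa_Q\|=\|A_Q\Dbx^\aff\|\le\|A\|\,\|\Dbza_Q\|$ and $\|\Delta\bs^\aff\|\le\|A\|\,\|\Dbx^\aff\|$ are equally small. Eliminating the slack block from \eqref{eq:CR_KKT_C} shows that $\Dbzc_Q=(\Delta\bx^{\cor},\Delta\bslambda_Q^{\cor})$ solves a linear system with the \emph{same} matrix $J_a(W(\bz),A_Q,\bs_Q,\bslambda_Q)$ and right-hand side $(\bzero,\ \sigma\mu_{(Q)}\bone-\Delta S_Q^{\aff}\Delta\bslambda_Q^{\aff})$; since $\mu_{(Q)}\to\frac1q\sum_{i\in Q}s_i^*\lambda_i^*=0$ by complementarity (and $\mu_{(Q)}=0$ when $Q$ is empty), $\sigma\in[0,1]$, and $\Delta S_Q^{\aff}\Delta\bslambda_Q^{\aff}$ is controlled by the bounds already obtained, (i) makes $\|\Dbzc_Q\|$ small as well. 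As $\gamma\in[0,1]$, $\|\Dbz_Q\|\le\|\Dbza_Q\|+\gamma\|\Dbzc_Q\|$ and $\|\Dbs_Q\|=\|A_Q\Dbx\|$ (hence also $\|\Delta\bs\|\le\|A\|\,\|\Dbx\|$) are small, so a further reduction of $\rho^*$ gives all of (ii) with the constant $\epsilon^*/4$.

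Claims (iii) and (iv) then follow by continuity: $\bz\to\bz^*$ forces $\lambda_i\to\lambda_i^*$ and $s_i\to s_i^*$, while $\Dbl_Q,\Dbla_Q$ and the full vectors $\Delta\bs=A\Dbx$, $\Delta\bs^\aff=A\Dbx^\aff$ are uniformly small by (ii), and $\tilde\lambda_i^{\aff,+}=\tilde\lambda_i^+=0$ for $i\notin Q$ by \eqref{eq:tblq_tblaq_def}. Combining with $\lambda_i^*\ge\epsilon^*$ (resp.\ $\lambda_i^*=0$) on $\cA(\bx^*)$ (resp.\ off it), the symmetric statements for $s_i^*$, and $\lambda_i^*<\lambda^{\max}$, one last shrinking of $\rho^*$ drives every perturbation below $\epsilon^*/4$ and keeps $\tilde\lambda_i^+<\lambda^{\max}$, which is exactly (iii)--(iv).

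The main obstacle I anticipate is the bookkeeping of (ii): one must recognize that both the \emph{regularized} affine-scaling direction and the corrector direction, restricted to the $(\bx,\bslambda_Q)$ components, are governed by the single matrix $J_a(W(\bz),A_Q,\bs_Q,\bslambda_Q)$ bounded in (i) --- so that no separate control of $J^{-1}$ or of the $S_Q^{-1}$ factors (which blow up along $\cA(\bx^*)$) is ever needed --- and that the corrector right-hand side genuinely vanishes in the limit, which hinges on $\mu_{(Q)}\to0$ via complementary slackness. Everything else (uniformity over $Q\in\cQ^*$, passage to the limit in $\lambda_i,s_i,\Dbl_Q,\ldots$) is routine because $\cQ^*$ is finite and the maps involved are continuous.
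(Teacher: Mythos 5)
Your proposal is correct and follows essentially the same route as the paper's (much terser) proof: nonsingularity of $J_a(W(\bz^*),A_Q,\bs_Q^*,\bslambda_Q^*)$ via Lemma~\ref{lemma:nonsingular_Ja}(ii) together with $W(\bz^*)=H$, continuity and finiteness of $\cQ^*$ for (i), vanishing right-hand sides (affine-scaling and corrector/combined systems) at the solution for (ii), and strict complementarity plus $\lambda_i^*<\lambda^{\max}$ for (iii)--(iv). Your explicit reduction of the corrector system to the same $J_a$ matrix and the observation that $\mu_{(Q)}\to0$ by complementary slackness simply spell out what the paper compresses into its citation of the right-hand side of~\eqref{eq:CR_KKT_dx}.
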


\begin{proof}

Claim (i) follows from Lemma~\ref{lemma:nonsingular_Ja}, continuity 
of $J_a(W(\bz),A_Q,\bslambda_Q,\bs_Q)$ (and the fact that $W(\bz^*)=H$).
Claims (ii) and (iv) follow from Claim (i), Lemma~\ref{lemma:nonsingular_Ja}, 
continuity of the right-hand sides of \eqref{eq:CR_KKT} 
and~\eqref{eq:CR_KKT_dx}, which are zero at the solution,
definition~\eqref{eq:tblq_tblaq_def} of $\tilde\bslambda^+$,
and our assumption that $\lambda^*_i < \lambda^{\max}$ for all $i\in\bm$.
Claim (iii) is true due to strict complementary slackness, the 
definition of $\epsilon^*$, and Claim (ii).
\end{proof}

In preparation for Proposition~\ref{prop:MPC_and_Newton_bound}, 
Lemmas~\ref{lemma:cor_dir_bound}--\ref{lemma:aff_and_Newton_bound}
provide bounds on the four terms in the last line of \eqref{eq:MPC_and_Newton}. 
The $\rho^*$ used in these lemmas comes 
from Lemma~\ref{lemma:rho_star}.
The proofs of Lemmas~\ref{lemma:cor_dir_bound}, 
\ref{lemma:min_step_size_bound}, and~\ref{lemma:aff_and_Newton_bound}
are omitted, as they are very similar to those of Lemmas~A.9 and A.10
in the supplementary materials of \cite{WNTO-2012} (where an MPC 
algorithm for linear optimization problems is considered) 
and of Lemma~B.19 in \cite{JungThesis} (also Lemma~16 in\cite{TAW-06}).

\begin{lemma}
\label{lemma:cor_dir_bound}
There exists a constant $c_1>0$ such that, for all $\bz\in \cF^o\cap B(\bz^*,\rho^*)$, and for all $Q\in\cQ^*$,
\begin{equation*}
\|\Dbzc_Q\| \leq c_1 \|\Dbza_Q\|^2\:.
\end{equation*}
\end{lemma}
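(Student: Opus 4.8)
\textbf{Proof proposal (Lemma~\ref{lemma:cor_dir_bound}).}
The plan is to bound $\|\Dbzc_Q\|$ through the norm of the right-hand side of the corrector system, and then to show that this right-hand side is $O(\|\Dbza_Q\|^2)$ uniformly in $\bz\in\cF^o\cap B(\bz^*,\rho^*)$ and $Q\in\cQ^*$. First, eliminating $\Delta\bs_Q^{\cor}=A_Q\Dbx^{\cor}$ from \eqref{eq:CR_KKT_C} puts the corrector equations in augmented form,
\[
J_a(W(\bz),A_Q,\bs_Q,\bslambda_Q)\,\Dbzc_Q
= \left[\begin{array}{c}\bzero\\ \sigma\mu_{(Q)}\bone-\Delta S_Q^{\aff}\Delta\bslambda_Q^{\aff}\end{array}\right],
\]
so that, by Lemma~\ref{lemma:rho_star}(i) (together with Lemma~\ref{lemma:nonsingular_Ja}),
\[
\|\Dbzc_Q\|\leq r\bigl(\sqrt{q}\,\sigma\mu_{(Q)}+\|\Delta S_Q^{\aff}\Delta\bslambda_Q^{\aff}\|\bigr).
\]
If $q=0$ the right-hand side vanishes and the claim is trivial, so assume $q\geq1$; note $\sqrt q\leq\sqrt m$, and since $\bs$ and $\bslambda$ are continuous, $\mu_{(Q)}$ is bounded by some $\bar{\mu}>0$ on the compact ball.

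The cross term is handled directly: since $\Dbsa_Q=A_Q\Dbxa$,
\[
\|\Delta S_Q^{\aff}\Delta\bslambda_Q^{\aff}\|\leq\|\Dbsa_Q\|_\infty\|\Dbla_Q\|\leq\|A\|\,\|\Dbxa\|\,\|\Dbla_Q\|\leq\|A\|\,\|\Dbza_Q\|^2 .
\]
The crux is therefore $\sigma\mu_{(Q)}$, and for this it suffices to find $c'>0$ with $1-\alpha^{\aff}\leq c'\|\Dbza_Q\|$, since then $\sigma\mu_{(Q)}=(1-\alpha^{\aff})^3\mu_{(Q)}\leq(1-\alpha^{\aff})^2\bar{\mu}\leq(c')^2\bar{\mu}\,\|\Dbza_Q\|^2$. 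To bound $1-\alpha^{\aff}$, recall that $\alpha^{\aff}$ is the largest $\alpha\in[0,1]$ maintaining $\bs_Q+\alpha\Dbsa_Q\geq\bzero$ and $\bslambda_Q+\alpha\Dbla_Q\geq\bzero$; if $\alpha^{\aff}=1$ there is nothing to prove. Otherwise some index $i_0\in Q$ is binding, i.e.\ either $\tilde{s}_{i_0}^{\aff,+}<0$ with $\Delta s_{i_0}^{\aff}<0$, or $\tilde{\lambda}_{i_0}^{\aff,+}<0$ with $\Delta\lambda_{i_0}^{\aff}<0$. Since $\Dbza_Q$ solves the affine-scaling system, the linearized complementarity identities $s_i\tilde{\lambda}_i^{\aff,+}=-\lambda_i\Delta s_i^{\aff}$ and $\lambda_i\tilde{s}_i^{\aff,+}=-s_i\Delta\lambda_i^{\aff}$ hold for all $i\in Q$. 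By Lemma~\ref{lemma:rho_star}(iii) (strict complementarity on the ball), the first alternative forces $i_0\in\cA(\bx^*)$ (otherwise $\tilde{s}_{i_0}^{\aff,+}>\epsilon^*/2>0$ and that constraint cannot bind before $\alpha=1$), and then
\[
1-\alpha^{\aff}=\frac{|\tilde{s}_{i_0}^{\aff,+}|}{|\Delta s_{i_0}^{\aff}|}=\frac{|\Delta\lambda_{i_0}^{\aff}|}{\tilde{\lambda}_{i_0}^{\aff,+}}\leq\frac{2}{\epsilon^*}\,\|\Dbza_Q\|;
\]
symmetrically, the second alternative forces $i_0\notin\cA(\bx^*)$, whence $\tilde{s}_{i_0}^{\aff,+}>\epsilon^*/2$ and $1-\alpha^{\aff}=|\Delta s_{i_0}^{\aff}|/\tilde{s}_{i_0}^{\aff,+}\leq(2\|A\|/\epsilon^*)\|\Dbza_Q\|$. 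Thus $c':=2\max\{1,\|A\|\}/\epsilon^*$ works.

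Collecting the three estimates yields $\|\Dbzc_Q\|\leq c_1\|\Dbza_Q\|^2$ with $c_1:=r\bigl(\sqrt{m}\,(c')^2\bar{\mu}+\|A\|\bigr)$, which depends only on the problem data and $\rho^*$, as required. I expect the one delicate point to be the case analysis for $1-\alpha^{\aff}$: one must invoke Lemma~\ref{lemma:rho_star}(iii) to pin down which component can be binding and with which sign, keeping in mind that $\Dbsa_Q$ and $\Dbla_Q$ may have entries of either sign; all the remaining steps are routine norm bookkeeping on the compact ball.
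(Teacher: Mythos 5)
Your proof is correct and follows essentially the same route as the argument the paper delegates to the cited references (Lemmas~A.9--A.10 of~\cite{WNTO-2012} and Lemma~B.19 of~\cite{JungThesis}): bound $\Dbzc_Q$ through the uniformly bounded inverse of $J_a$ (Lemma~\ref{lemma:rho_star}(i)), estimate the cross term $\Delta S_Q^{\aff}\Dbla_Q$ directly, and obtain $\sigma\mu_{(Q)}=O(\|\Dbza_Q\|^2)$ from $1-\alpha^{\aff}=O(\|\Dbza_Q\|)$ via the strict-complementarity separation in Lemma~\ref{lemma:rho_star}(iii). The only nit is that in~\eqref{eq:alpha_aff} the primal ratio test ranges over all of $\bs$, not just $\bs_Q$; your case analysis survives unchanged because Lemma~\ref{lemma:rho_star}(iii) covers every $i\in\bm$ and $\bm\setminus Q\subseteq\bm\setminus\cA(\bx^*)$ for $Q\in\cQ^*$, so no index outside $Q$ can be binding on $[0,1]$.
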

Note that an upper bound on the magnitude of the MPC search 
direction $\Dbz_Q$ can be obtained by using Lemma~\ref{lemma:cor_dir_bound} 
and Lemma~\ref{lemma:rho_star}(ii), viz.
\begin{equation}
\|\Dbz_Q\| \leq \|\Dbza_Q\| + \|\Dbzc_Q\| 
\leq \|\Dbza_Q\|+c_1\|\Dbza_Q\|^2
\leq \left(1+c_1\frac{\epsilon^*}{4}\right)\|\Dbza_Q\|\:.
\label{eq:MPC_dir_bound}
\end{equation}
This bound is used in the proofs of 
Lemma~\ref{lemma:min_step_size_bound} and Proposition~\ref{prop:MPC_and_Newton_bound}.

\begin{lemma}
\label{lemma:min_step_size_bound}
There exists a constant $c_2>0$ such that, 
for all $\bz\in \cF^o\cap B(\bz^*,\rho^*)$, and for all $Q\in\cQ^*$,
\begin{equation*}
|1-\alpha| \leq c_2 \|\Dbza_Q\|\:.
\end{equation*}
\end{lemma}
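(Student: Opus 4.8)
\noindent\textbf{Proof proposal for Lemma~\ref{lemma:min_step_size_bound}.}
Since $\alpha=\min\{\alpha_\prim,\alpha_\dual\}$ we have $|1-\alpha|=1-\alpha=\max\{1-\alpha_\prim,\,1-\alpha_\dual\}$, so it suffices to bound each of $1-\alpha_\prim$ and $1-\alpha_\dual$ by a fixed multiple of $\|\Dbza_Q\|$; the two bounds are obtained by the same argument with the roles of the primal and dual data exchanged, so I treat $1-\alpha_\dual$. The plan is to first bound $1-\bar\alpha_\dual$ using estimate~\eqref{eq:step_size_bound_dual} of Lemma~\ref{lemma:step_size_bound}, then transfer the bound to $\alpha_\dual$ through the definition in~\eqref{eq:step_size}. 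I apply Lemma~\ref{lemma:step_size_bound} with $\cA:=\cA(\bx^*)$, which is legitimate since $Q\in\cQ^*$ gives $\cA(\bx^*)\subseteq Q$; its hypotheses hold on $\cF^o\cap B(\bz^*,\rho^*)$ because, by Lemma~\ref{lemma:rho_star}(iii), $\lambda_i+\Delta\lambda_i=\tilde\lambda_i^+>\epsilon^*/2$ for $i\in\cA(\bx^*)$ and $s_i+\Delta s_i=\tilde s_i^{+}>\epsilon^*/2$ for $i\notin\cA(\bx^*)$, in particular for $i\in Q\setminus\cA(\bx^*)$.

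Estimate~\eqref{eq:step_size_bound_dual} is the right tool precisely because the indices it involves lie in $Q\setminus\cA(\bx^*)$, for which (under strict complementarity) $\lambda_i\to0$ but $s_i$ stays bounded away from $0$; the naive bound $\bar\alpha_\dual\ge\min_i\{\lambda_i/|\Delta\lambda_i|\}$ would be useless here. From~\eqref{eq:step_size_bound_dual},
\[
1-\bar\alpha_\dual\ \le\ \max_{i\in Q\setminus\cA(\bx^*)}\max\Bigl\{0,\ 1-\tfrac{s_i}{|s_i+\Delta s_i^{\aff}|},\ 1-\tfrac{s_i-|\Delta s_i^{\aff}|}{|s_i+\Delta s_i|}\Bigr\}.
\]
For each such $i$ the two denominators equal $\tilde s_i^{\aff,+}$ and $\tilde s_i^{+}$, hence exceed $\epsilon^*/2$ by Lemma~\ref{lemma:rho_star}(iii); using $s_i>0$ and the triangle inequality, the numerators $|s_i+\Delta s_i^{\aff}|-s_i$ and $|s_i+\Delta s_i|-s_i+|\Delta s_i^{\aff}|$ are at most $|\Delta s_i^{\aff}|$ and $|\Delta s_i|+|\Delta s_i^{\aff}|$. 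Since $\Delta s_i^{\aff}$ and $\Delta s_i$ are components of $A_Q\Dbxa$ and $A_Q\Dbx$, they are bounded by $\|A\|\,\|\Dbxa\|\le\|A\|\,\|\Dbza_Q\|$ and, via~\eqref{eq:MPC_dir_bound}, by $\|A\|\,\|\Dbx\|\le\|A\|\,\|\Dbz_Q\|\le\|A\|(1+c_1\epsilon^*/4)\|\Dbza_Q\|$. Hence $1-\bar\alpha_\dual\le c\,\|\Dbza_Q\|$ for a constant $c$ depending only on $\|A\|$, $\epsilon^*$, and $c_1$.

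To transfer this, note from~\eqref{eq:step_size} that $\alpha_\dual\ge\min\{1,\ \bar\alpha_\dual-\|\Dbx\|\}$, whence
\[
1-\alpha_\dual\ \le\ \max\{0,\ 1-\bar\alpha_\dual\}+\|\Dbx\|\ \le\ c\,\|\Dbza_Q\|+(1+c_1\epsilon^*/4)\|\Dbza_Q\|,
\]
using the previous paragraph and~\eqref{eq:MPC_dir_bound}. The symmetric argument---via~\eqref{eq:step_size_bound_primal}, the $\lambda$-bounds of Lemma~\ref{lemma:rho_star}(iii), and $|\Delta\lambda_i|\le\|\Dbl_Q\|\le\|\Dbz_Q\|\le(1+c_1\epsilon^*/4)\|\Dbza_Q\|$---yields $1-\alpha_\prim\le c''\|\Dbza_Q\|$, and setting $c_2:=\max\{c+1+c_1\epsilon^*/4,\ c''\}$ finishes the proof. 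The one point needing attention is that $c_2$ can be chosen independently of $Q\in\cQ^*$ and of $\bz\in\cF^o\cap B(\bz^*,\rho^*)$, which holds because $\epsilon^*$ is uniform by Lemma~\ref{lemma:rho_star}, $c_1$ is uniform by Lemma~\ref{lemma:cor_dir_bound}, and $\|A_Q\|\le\|A\|$ for every working set $Q$.
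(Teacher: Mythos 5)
Your proof is correct: the reduction of $1-\alpha$ to $1-\bar\alpha_\prim$, $1-\bar\alpha_\dual$ plus $\|\Dbx\|$, the application of Lemma~\ref{lemma:step_size_bound} with $\cA:=\cA(\bx^*)$ (legitimate since $Q\in\cQ^*$, with its hypotheses supplied by Lemma~\ref{lemma:rho_star}(iii)), the $\epsilon^*/2$ lower bounds on the denominators, and the transfer to $\|\Dbza_Q\|$ via $\|A\|$ and \eqref{eq:MPC_dir_bound} all check out, uniformly in $\bz$ and $Q$. The paper itself omits this proof, deferring to the analogous Lemmas A.9--A.10 of the supplementary material of [WNTO-2012] and Lemma B.19 of [JungThesis], which argue in essentially the same way, so your reconstruction matches the intended route.
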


\begin{lemma}
\label{lemma:reg_err_bound}
There exists a constant $c_3>0$ such that, for all $\bz\in \cF^o\cap B(\bz^*,\rho^*)$ and all $Q\in\cQ^*$,
\begin{equation*}
\|\Dbza_Q - \Dbz^0_Q\| \leq c_3 \|\bz-\bz^*\|^2.
\end{equation*}
\end{lemma}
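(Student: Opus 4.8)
The plan is to exploit that $\Dbza_Q$ and $\Dbz^0_Q$ solve the \emph{same} right-hand side $-\Phi_Q(\bz)$ against coefficient matrices that differ only through the regularization term. Subtracting the two defining systems
\[
J_a(W(\bz),A_Q,\bs_Q,\bslambda_Q)\,\Dbza_Q = -\Phi_Q(\bz) = J_a(H,A_Q,\bs_Q,\bslambda_Q)\,\Dbz^0_Q
\]
and using that the difference of the two system matrices is concentrated in the $(1,1)$ block,
\[
J_a(H,A_Q,\bs_Q,\bslambda_Q) - J_a(W(\bz),A_Q,\bs_Q,\bslambda_Q) = \begin{bmatrix} -\varrho(\bz)R & \bzero\\ \bzero & \bzero \end{bmatrix},
\]
I would obtain, on $\cF^o\cap B(\bz^*,\rho^*)$ and for $Q\in\cQ^*$ (so that $J_a(W(\bz),A_Q,\bs_Q,\bslambda_Q)$ is invertible, with inverse of norm at most $r$, by Lemma~\ref{lemma:rho_star}(i)),
\[
\Dbza_Q - \Dbz^0_Q = -\,J_a(W(\bz),A_Q,\bs_Q,\bslambda_Q)^{-1}\begin{bmatrix} \varrho(\bz)R & \bzero\\ \bzero & \bzero \end{bmatrix}\Dbz^0_Q,
\]
hence $\|\Dbza_Q - \Dbz^0_Q\| \le r\,\varrho(\bz)\,\|R\|\,\|\Dbz^0_Q\|$.

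From here two further estimates, each uniform over the finitely many $Q\in\cQ^*$, finish the proof. First, $\varrho(\bz)$ is $O(\|\bz-\bz^*\|)$ near $\bz^*$: since $\varrho(\bz)\le E(\bx,\bslambda)/\bar E$, it suffices to bound $E(\bx,\bslambda) = \|(\|\bv(\bx,\bslambda)\|,\|\bw(\bx,\bslambda)\|)\|$; the map $\bv(\bx,\bslambda)=H\bx+\bc-A^T\bslambda$ is affine with $\bv(\bz^*)=\bzero$, so $\|\bv(\bx,\bslambda)\|\le c'\|\bz-\bz^*\|$, while, using strict complementarity and $\bs-\bs^*=A(\bx-\bx^*)$, for $\bz$ near $\bz^*$ one has $w_i(\bx,\bslambda)=|s_i|\le\|A\|\,\|\bx-\bx^*\|$ when $i\in\cA(\bx^*)$ and $w_i(\bx,\bslambda)=|\lambda_i|\le\|\bslambda-\bslambda^*\|$ when $i\notin\cA(\bx^*)$; hence $E(\bx,\bslambda)\le c_E\|\bz-\bz^*\|$. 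Second, $\|\Dbz^0_Q\|$ is $O(\|\bz-\bz^*\|)$: $\Phi_Q$ is a (quadratic) polynomial with $\Phi_Q(\bz^*)=\bzero$, hence Lipschitz on $B(\bz^*,\rho^*)$, so $\|\Phi_Q(\bz)\|\le c_\Phi\|\bz-\bz^*\|$ and therefore $\|\Dbza_Q\| = \|J_a(W(\bz),A_Q,\bs_Q,\bslambda_Q)^{-1}\Phi_Q(\bz)\| \le r\,c_\Phi\|\bz-\bz^*\|$; combining this with $\|\Dbz^0_Q\|\le\|\Dbza_Q\|+\|\Dbza_Q-\Dbz^0_Q\|$ and the displayed inequality — noting that $r\,\varrho(\bz)\|R\|\le\tfrac12$ holds on $\cF^o\cap B(\bz^*,\rho^*)$ once the $\rho^*$ of Lemma~\ref{lemma:rho_star} is taken small enough, which costs nothing since shrinking $\rho^*$ only strengthens that lemma — gives $\|\Dbz^0_Q\|\le 2r\,c_\Phi\|\bz-\bz^*\|$.

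Plugging the two estimates into $\|\Dbza_Q - \Dbz^0_Q\| \le r\,\varrho(\bz)\,\|R\|\,\|\Dbz^0_Q\|$ yields $\|\Dbza_Q - \Dbz^0_Q\| \le (2r^2\|R\|\,c_E\,c_\Phi/\bar E)\,\|\bz-\bz^*\|^2$, i.e.\ the claim with $c_3:=2r^2\|R\|\,c_E\,c_\Phi/\bar E$. There is no real obstacle here — this is routine first-order perturbation analysis of a linear system whose matrix is uniformly invertible near $\bz^*$ — the one point requiring care is the Lipschitz bound $E(\bx,\bslambda)\le c_E\|\bz-\bz^*\|$, which forces one to unwind the $\min$ in the definition of the $w_i$ using strict complementarity (so that, locally, each $w_i$ reduces to a single affine quantity vanishing at $\bz^*$).
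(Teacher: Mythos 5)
Your proposal is correct and follows essentially the same route as the paper: a first-order perturbation argument for the two linear systems sharing the right-hand side $-\Phi_Q(\bz)$, using the uniform bound on $J_a(W(\bz),A_Q,\bs_Q,\bslambda_Q)^{-1}$ from Lemma~\ref{lemma:rho_star}(i) together with the chain $\|W(\bz)-H\|=\varrho(\bz)\|R\|\leq c\,E(\bz)\leq c'\|\bz-\bz^*\|$ and an $O(\|\bz-\bz^*\|)$ bound on the direction (the paper bounds $\|\Phi_Q(\bz)\|$ directly, you bootstrap $\|\Dbz^0_Q\|$ after possibly shrinking $\rho^*$, which is immaterial).
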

\begin{proof}
We have
\begin{equation*}
\Dbza_Q - \Dbz^0_Q
= - (J_a(W(\bz),A_Q,\bs_Q,\bslambda_Q)^{-1} 
    - J_a(H,A_Q,\bs_Q,\bslambda_Q)^{-1}) \Phi_Q(\bz)
\end{equation*}
so that there exist $c_{31}>0$ 
such that,
for all $\bz\in \cF^o\cap B(\bz^*,\rho^*)$ and all $Q\in\cQ^*$,
\begin{equation*}
\|\Dbza_Q - \Dbz^0_Q\|
\leq c_{31} \|W(\bz)-H\| \|\bz-\bz^*\|\:,
\end{equation*}
where the second inequality follows from Lemma~\ref{lemma:rho_star}(i).
Since $W(\bz)-H=\varrho(\bz)R$, $|\varrho(\bz)|\leq c_{32} |E(\bz)|$, and
$|E(\bz)|\leq c_{33}\|\bz-\bz^*\|$, for some $c_{32}>0$ and $c_{33}>0$, 
the proof is complete.
\end{proof}

\begin{lemma} 
There exists a constant $c_4>0$ such that, for all $\bz\in \cF^o\cap B(\bz^*,\rho^*)$, and for all $Q\in\cQ^*$,
\begin{equation*}
\|\Dbz^0_Q-(\Delta^{\rm N}\bz)_Q\|\leq c_4 \|\bz-\bz^*\| \|(\Delta^{\rm N}\bz)_Q\|\:.
\end{equation*}
\label{lemma:aff_and_Newton_bound}
\end{lemma}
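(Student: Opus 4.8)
The plan is to express the $Q$-components $(\Delta^{\rm N}\bz)_Q=(\Delta^{\rm N}\bx,\Delta^{\rm N}\bslambda_Q)$ of the full, unregularized Newton step as the solution of a \emph{reduced} augmented system that differs from the one solved by $\Dbz^0_Q$ only in its Hessian block, which will be a small perturbation of $H$, and then to finish by a resolvent-identity estimate. First I would write out the full system $J_a(H,A,\bs,\bslambda)\,\Delta^{\rm N}\bz=-\Phi(\bz)$ in the three blocks indexed by $\bx$, by $\bslambda_Q$, and by $\bslambda_{\Qc}$. Since $Q\in\cQ^*$ we have $\cA(\bx^*)\subseteq Q$, hence $s_i^*>0$ and $\lambda_i^*=0$ for every $i\in\Qc$; consequently, taking $\rho^*$ small enough (which only strengthens Lemma~\ref{lemma:rho_star}), $S_{\Qc}\succ\bzero$ on $\cF^o\cap B(\bz^*,\rho^*)$, so the $\bslambda_{\Qc}$-block can be solved for $\Delta^{\rm N}\bslambda_{\Qc}=-\bslambda_{\Qc}-S_{\Qc}^{-1}\Lambda_{\Qc}A_{\Qc}\Delta^{\rm N}\bx$. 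Substituting this into the $\bx$-block and using $A^T\bslambda=(A_Q)^T\bslambda_Q+(A_{\Qc})^T\bslambda_{\Qc}$, the $(A_{\Qc})^T\bslambda_{\Qc}$ terms cancel and one gets that $(\Delta^{\rm N}\bz)_Q$ solves
\begin{equation*}
J_a(W',A_Q,\bs_Q,\bslambda_Q)\,(\Delta^{\rm N}\bz)_Q=-\Phi_Q(\bz),\qquad
W':=H+(A_{\Qc})^T S_{\Qc}^{-1}\Lambda_{\Qc}A_{\Qc}\ \bigl(\succeq H,\ \text{symmetric}\bigr).
\end{equation*}

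Next I would observe that $W'\to H$ as $\bz\to\bz^*$, because $\lambda_i^*=0$ for $i\in\Qc$; hence $J_a(W',A_Q,\bs_Q,\bslambda_Q)\to J_a(H,A_Q,\bs_Q^*,\bslambda_Q^*)$, which is nonsingular by Lemma~\ref{lemma:nonsingular_Ja}(ii). By continuity, and since $\cQ^*$ is finite, there are uniform bounds $\|J_a(W',A_Q,\bs_Q,\bslambda_Q)^{-1}\|\le r'$ and $\|J_a(H,A_Q,\bs_Q,\bslambda_Q)^{-1}\|\le r''$ on $\cF^o\cap B(\bz^*,\rho^*)$ (the latter is also implicit in the proof of Lemma~\ref{lemma:rho_star}(i)). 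Since $\Dbz^0_Q$ and $(\Delta^{\rm N}\bz)_Q$ solve systems with the \emph{same} right-hand side $-\Phi_Q(\bz)$ and Hessian blocks $H$ and $W'$ respectively, the resolvent identity $B^{-1}-C^{-1}=B^{-1}(C-B)C^{-1}$ gives
\begin{equation*}
\Dbz^0_Q-(\Delta^{\rm N}\bz)_Q
= J_a(H,A_Q,\bs_Q,\bslambda_Q)^{-1}\bigl(J_a(W',A_Q,\bs_Q,\bslambda_Q)-J_a(H,A_Q,\bs_Q,\bslambda_Q)\bigr)(\Delta^{\rm N}\bz)_Q,
\end{equation*}
where the middle factor is block diagonal with upper-left block $W'-H=(A_{\Qc})^T S_{\Qc}^{-1}\Lambda_{\Qc}A_{\Qc}$ and zeros elsewhere, so its norm is $\|W'-H\|$.

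Finally I would bound $\|W'-H\|\le\|A\|^2\,\|S_{\Qc}^{-1}\|\,\|\Lambda_{\Qc}\|$; on the ball $\|S_{\Qc}^{-1}\|$ is bounded since $s_i^*>0$ for $i\in\Qc$, while $\|\Lambda_{\Qc}\|=\max_{i\in\Qc}\lambda_i=\max_{i\in\Qc}|\lambda_i-\lambda_i^*|\le\|\bslambda-\bslambda^*\|\le\|\bz-\bz^*\|$ because $\lambda_i^*=0$ for $i\in\Qc$. Combining the three estimates yields $\|\Dbz^0_Q-(\Delta^{\rm N}\bz)_Q\|\le c_4\,\|\bz-\bz^*\|\,\|(\Delta^{\rm N}\bz)_Q\|$ with $c_4:=r''\,\|A\|^2\sup\|S_{\Qc}^{-1}\|$ independent of $Q\in\cQ^*$ and of $\bz$, as claimed. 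The main (and essentially only non-routine) step is the block-elimination bookkeeping: one must verify that the $(A_{\Qc})^T\bslambda_{\Qc}$ contributions arising from the substituted $\Delta^{\rm N}\bslambda_{\Qc}$ and from the rearranged right-hand side cancel exactly, so that the reduced system has precisely right-hand side $-\Phi_Q(\bz)$ and Hessian block $W'$ rather than $-\Phi_Q(\bz)$ plus a spurious residual; once that identity is secured, the rest is a standard perturbation argument, closely paralleling Lemma~16 of~\cite{TAW-06} and Lemma~B.19 of~\cite{JungThesis}.
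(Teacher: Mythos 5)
Your argument is correct and is essentially the route the paper intends via its pointer to Lemma~16 of~\cite{TAW-06} and Lemma~B.19 of~\cite{JungThesis}: the $Q$-components of the full Newton step solve the reduced augmented system whose Hessian block is $H+(A_{\Qc})^T S_{\Qc}^{-1}\Lambda_{\Qc}A_{\Qc}$, and since $\bslambda^*_{\Qc}=\bzero$ (as $\cA(\bx^*)\subseteq Q$) and $\bs_{\Qc}$ is bounded away from zero by Lemma~\ref{lemma:rho_star}(iii), this perturbation of $H$ is $O(\|\bz-\bz^*\|)$, after which your perturbation-of-the-inverse estimate gives the claim. The only housekeeping point, the uniform bound on $\|J_a(H,A_Q,\bs_Q,\bslambda_Q)^{-1}\|$ over the ball and over the finitely many $Q\in\cQ^*$, follows from the same continuity/nonsingularity argument underlying Lemma~\ref{lemma:rho_star}(i) (shrinking $\rho^*$ if necessary), exactly as the paper itself implicitly does in the proof of Lemma~\ref{lemma:reg_err_bound}.
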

\noindent
With Lemmas~\ref{lemma:cor_dir_bound}--\ref{lemma:aff_and_Newton_bound}
in hand, 
we return to inequality~\eqref{eq:MPC_and_Newton}.

\begin{proposition}
\label{prop:MPC_and_Newton_bound}
There exists a constant $c_5>0$ such that, for all $\bz\in \cF^o\cap B(\bz^*,\rho^*)$, and for all $Q\in\cQ^*$,
\begin{equation}
\|\thickbreve{\bz}^+_Q-(\bz_Q+(\Delta^{\rm N}\bz)_Q)\| \leq c_5 \max\{\|\Delta^{\rm N}\bz\|^2,\|\bz-\bz^*\|^2\}\:.
\label{eq:Lemma 21}
\end{equation}
\end{proposition}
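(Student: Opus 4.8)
The plan is to bound each of the four terms on the right-hand side of the last line of~\eqref{eq:MPC_and_Newton} by a constant multiple of $\max\{\|\Delta^{\rm N}\bz\|^2,\|\bz-\bz^*\|^2\}$, uniformly over $\bz\in\cF^o\cap B(\bz^*,\rho^*)$ and $Q\in\cQ^*$, and then to take $c_5$ to be the sum of these four constants. Uniformity over $Q$ is automatic here, since $\cQ^*$ is a finite family of subsets of $\bm$, so any bound whose constants depend on $Q$ may be replaced by the maximum over $\cQ^*$.

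First I would establish the auxiliary estimate $\|\Dbza_Q\|\le\kappa\|\bz-\bz^*\|$ on $\cF^o\cap B(\bz^*,\rho^*)$, with $\kappa>0$ independent of $Q\in\cQ^*$. This follows by writing $\Dbza_Q=-J_a(W(\bz),A_Q,\bs_Q,\bslambda_Q)^{-1}\Phi_Q(\bz)$, bounding the inverse by $r$ via Lemma~\ref{lemma:rho_star}(i), and noting that $\Phi_Q$ is $C^1$ with $\Phi_Q(\bz^*)=\bzero$: indeed $\cA(\bx^*)\subseteq Q$ together with strict complementarity forces $\lambda_i^*=0$ for $i\notin Q$ and $\ba_i^T\bx^*-b_i=0$ for $i\in Q\cap\cA(\bx^*)$, so both blocks of $\Phi_Q(\bz^*)$ vanish. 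A common local Lipschitz bound $\|\Phi_Q(\bz)\|\le L\|\bz-\bz^*\|$ then yields $\kappa:=rL$, and in particular $\|\Dbza_Q\|^2\le\kappa^2\|\bz-\bz^*\|^2$.

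The four terms are then handled as follows. For the first, $(1-\alpha)\le c_2\|\Dbza_Q\|$ by Lemma~\ref{lemma:min_step_size_bound} and $\|\Dbz_Q\|\le(1+c_1\epsilon^*/4)\|\Dbza_Q\|$ by~\eqref{eq:MPC_dir_bound}, so $(1-\alpha)\|\Dbz_Q\|\le c_2(1+c_1\epsilon^*/4)\kappa^2\|\bz-\bz^*\|^2$. For the second, Lemma~\ref{lemma:cor_dir_bound} gives $\|\Dbzc_Q\|\le c_1\|\Dbza_Q\|^2\le c_1\kappa^2\|\bz-\bz^*\|^2$. For the third, Lemma~\ref{lemma:reg_err_bound} gives $\|\Dbza_Q-\Dbz^0_Q\|\le c_3\|\bz-\bz^*\|^2$ directly. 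For the fourth, Lemma~\ref{lemma:aff_and_Newton_bound} gives $\|\Dbz^0_Q-(\Delta^{\rm N}\bz)_Q\|\le c_4\|\bz-\bz^*\|\,\|(\Delta^{\rm N}\bz)_Q\|\le c_4\|\bz-\bz^*\|\,\|\Delta^{\rm N}\bz\|\le c_4\max\{\|\bz-\bz^*\|^2,\|\Delta^{\rm N}\bz\|^2\}$, using the elementary inequality $ab\le\max\{a^2,b^2\}$. Summing the four bounds and setting $c_5:=c_2(1+c_1\epsilon^*/4)\kappa^2+c_1\kappa^2+c_3+c_4$ yields~\eqref{eq:Lemma 21}.

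This is essentially an assembly step; the substantive estimates are Lemmas~\ref{lemma:cor_dir_bound}--\ref{lemma:aff_and_Newton_bound}, imported from~\cite{WNTO-2012,JungThesis,TAW-06}, so I do not anticipate a genuine obstacle. The one point that requires care is the fourth term, which does not reduce purely to $\|\bz-\bz^*\|^2$ but only to the mixed product $\|\bz-\bz^*\|\,\|\Delta^{\rm N}\bz\|$; this is precisely why~\eqref{eq:Lemma 21} is stated with the $\max$ of the two squared quantities, and it is this form that subsequently matches hypothesis~\eqref{eq:Newton_conv_assum_2} of Lemma~\ref{coro:Newton_conv_2} when Proposition~\ref{prop:MPC_and_Newton_bound} is invoked.
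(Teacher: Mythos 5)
Your proof is correct and follows essentially the same route as the paper: the same decomposition \eqref{eq:MPC_and_Newton} combined with Lemmas~\ref{lemma:cor_dir_bound}--\ref{lemma:aff_and_Newton_bound} and the bound \eqref{eq:MPC_dir_bound}. The only (harmless) variation is how $\|\Dbza_Q\|$ is absorbed: you bound it directly by a constant times $\|\bz-\bz^*\|$ using Lemma~\ref{lemma:rho_star}(i) and the Lipschitz continuity of $\Phi_Q$ with $\Phi_Q(\bz^*)=\bzero$, whereas the paper bounds it by $c_3\|\bz-\bz^*\|^2+c_4\|\bz-\bz^*\|\,\|(\Delta^{\rm N}\bz)_Q\|+\|(\Delta^{\rm N}\bz)_Q\|$ (its inequality \eqref{eq:AS_Newton_bound}, which it then reuses later in the proof of Theorem~\ref{thm:q-quad}) and invokes boundedness of $\cF^o\cap B(\bz^*,\rho^*)$; both routes yield \eqref{eq:Lemma 21}.
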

\begin{proof}
Let $\bz\in \cF^o\cap B(\bz^*,\rho^*)$ and $Q\in\cQ^*$.
It follows from \eqref{eq:MPC_and_Newton}, 
Lemmas~\ref{lemma:cor_dir_bound}--\ref{lemma:aff_and_Newton_bound},
and \eqref{eq:MPC_dir_bound} that
\begin{equation*}
\begin{alignedat}{2}
\|\thickbreve{\bz}^+_Q-(\bz_Q+(\Delta^{\rm N}\bz)_Q)\|
&\leq(1-\alpha) \|\Dbz_Q\|+ \|\Dbzc_Q\| + \|\Dbza_Q - \Dbz^0_Q\| + \|\Dbz^0_Q-(\Delta^{\rm N}\bz)_Q\|\\
&\leq c_2\|\Dbza_Q\| \|\Dbz_Q\| + c_1\|\Dbza_Q\|^2 + c_3 \|\bz-\bz^*\|^2 + c_4 \|\bz-\bz^*\| \|(\Delta^{\rm N}\bz)_Q\|\\
&\leq \left(c_2\left(1+c_1\frac{\epsilon^*}{4}\right) + c_1\right)\|\Dbza_Q\|^2 + c_3 \|\bz-\bz^*\|^2 + c_4 \|\bz-\bz^*\| \|(\Delta^{\rm N}\bz)_Q\|\:.
\end{alignedat}
\end{equation*}
Also, by Lemmas~\ref{lemma:reg_err_bound} and \ref{lemma:aff_and_Newton_bound}, we have
\begin{equation}
\begin{alignedat}{2}
\|\Dbza_Q\| &\leq 
\|\Dbza_Q-\Dbz^0_Q\| + \|\Dbz^0_Q-(\Delta^{\rm N}\bz)_Q\|
+ \|(\Delta^{\rm N}\bz)_Q\| \\
&\leq 
c_3 \|\bz-\bz^*\|^2 + c_4\|\bz-\bz^*\|\|(\Delta^{\rm N}\bz)_Q\|
+ \|(\Delta^{\rm N}\bz)_Q\| \:.
\end{alignedat}
\label{eq:AS_Newton_bound}
\end{equation}
The claim follows (in view of boundedness of $\cF^o\cap B(\bz^*,\rho^*)$).
\end{proof}

With Proposition~\ref{prop:MPC_and_Newton_bound} established, 
we proceed to the second major step of the proof of 
Theorem~\ref{thm:q-quad}: to show that~\eqref{eq:Lemma 21} 
still holds when $\bz^+$
is substituted for $\thickbreve{\bz}^+_Q$.

\medskip
{\noindent\bf Proof of Theorem~\ref{thm:q-quad}.}
Again, let $\rho^*$ be as given in Lemma~\ref{lemma:rho_star}.
Let $\bz\in \cF^o\cap B(\bz^*,\rho^*)$ and $Q\in\cQ^*$.
Let $\rho:=\rho^*$, $\bt:=\bz$, and $\bt^*:=\bz^*$. 
Then the desired q-quadratic convergence is a direct consequence of Lemma~\ref{coro:Newton_conv_2}, provided that the condition \eqref{eq:Newton_conv_assum_2} is satisfied.
Hence, we now show that there exists some constant $c>0$ such that, 
for each $i\in\bm$,
\begin{equation}
\min\{|z_i^+-z_i^*|,|z_i^+-(z_i+(\Delta^{\rm N} z)_i)|\}\leq c \,\max\{\|\Delta^{\rm N}\bz\|^2,\|\bz-\bz^*\|^2\}\:.
\label{eq:MPC_conv_assum}
\end{equation}
As per Proposition~\ref{prop:MPC_and_Newton_bound}, \eqref{eq:MPC_conv_assum}
holds for $i\in Q$ with $z_i^+$ replaced with $\thickbreve z_i^+$.
In particular,
\eqref{eq:MPC_conv_assum} holds for the $\bx^+$ components of $\bz^+$.
It remains to show that \eqref{eq:MPC_conv_assum} holds for 
the $\bslambda^+$ components of $\bz^+$.
Firstly, for all $i\in\cA(\bx^*)$, we show that $\lambda_i^+=\thickbreve{\lambda}_i^+$, thus \eqref{eq:MPC_conv_assum} holds for all $\lambda^+_i$ such that $i\in\cA(\bx^*)$ by Proposition~\ref{prop:MPC_and_Newton_bound}.
From the fact that $\bslambda>\bzero$ ($\bz\in\cF^o$) and Lemma~\ref{lemma:rho_star}(ii), 
and since $\nu\geq 2$, it follows that
\begin{equation}
\chi:=\|\Dbxa\|^\nu + \|[\tilde{\bslambda}^{\aff,+}_Q]_-\|^\nu 
\leq \|\Dbxa\|^\nu + \|\Dbla_Q\|^\nu
\leq 2\left(\frac{\epsilon^*}{4}\right)^\nu
\leq \frac{\epsilon^*}{2}\:,
\label{eq:conv_thm_in_A_1}
\end{equation}
so that $\min\{\chi,\underline\lambda\}\leq\epsilon^*/2$.
Also, from Lemma~\ref{lemma:rho_star}(iii) and the fact that $\thickbreve{\bslambda}_Q^+$ is a convex combination of $\bslambda_Q$ and $\tbl_Q^+$, we have, for all $i\in\cA(\bx^*)$,
\begin{equation}
\frac{\epsilon^*}{2}< \min\{\lambda_i,\tilde{\lambda}_i^+\}\leq \thickbreve{\lambda}_i^+\:.
\label{eq:conv_thm_in_A_2}
\end{equation}
Hence, from \eqref{eq:conv_thm_in_A_1}, \eqref{eq:conv_thm_in_A_2}, 
Lemma~\ref{lemma:rho_star}(iv), and \eqref{eq:update_lambda_Q}, we 
conclude that $\lambda_i^+=\thickbreve{\lambda}_i^+$ for all $i\in\cA(\bx^*)$. 
Secondly,
we prove that there exists $d_1>0$ such that
\begin{equation}
\|\bslambda_{Q\setminus\cA(\bx^*)}^+\|
= \|\bslambda_{Q\setminus\cA(\bx^*)}^+ 
- \bslambda_{Q\setminus\cA(\bx^*)}^*\|
\leq d_1\max\{\|\Delta^{\rm N}\bz\|^2,\|\bz-\bz^*\|^2\} 
\quad\forall i\in Q\setminus\cA(\bx^*)\:,
\label{eq:conv_thm_in_Q}
\end{equation}
thus establishing~\eqref{eq:MPC_conv_assum} for $\lambda_i^+$ with
$i\in Q\setminus\cA(\bx^*)$.
For $i\in Q\setminus\cA(\bx^*)$, we know from \eqref{eq:update_lambda_Q} that, 
either $\lambda_i^+=\min\{\lambda^{\max}, \thickbreve{\lambda}_i^+\}$, or 
$\lambda_i^+ = \min\{\underline{\lambda},\|\Dbxa\|^\nu+\|[\tilde{\bslambda}^{\aff,+}_Q]_-\|^\nu\}$.
In the former case,
we have
\begin{equation*}
\begin{alignedat}{2}
|\lambda_i^+|\leq
|\thickbreve{\lambda}_i^+|=
|\thickbreve{\lambda}_i^+-\lambda_i^*|
&\leq 
|\thickbreve{\lambda}_i^+-(\lambda_i+(\Delta^{\rm N}\lambda)_i)| +
|(\lambda_i+(\Delta^{\rm N}\lambda)_i)-\lambda_i^*|\\
&\leq d_2\max\{\|\Delta^{\rm N}\bz\|^2,\|\bz-\bz^*\|^2\} + d_3\|\bz-\bz^*\|^2\:,
\end{alignedat}
\end{equation*}
for some $d_2>0$, $d_3>0$. 
Here the last inequality follows from 
Proposition~\ref{prop:MPC_and_Newton_bound} and the quadratic 
rate of the Newton step given in 
Lemma~\ref{coro:Newton_conv_2}.
In the latter case, since $\bslambda>\bzero$, we obtain 
\begin{equation}
|\lambda_i^+| \leq \|\Dbxa\|^\nu + \|[[\tilde{\bslambda}^{\aff,+}_Q]_-\|^\nu
\leq \|\Dbxa\|^\nu + \|\Dbla_Q\|^\nu = \|\Dbza_Q\|^\nu
\leq d_4\max\{\|\Delta^{\rm N}\bz\|^2,\|\bz-\bz^*\|^2\}\:,
\label{eq:conv_thm_in_Q_2}
\end{equation}
for some $d_4>0$. 
Here the equality is from the definition of $\Dbza$ and the last 
inequality follows from $\nu\geq2$, \eqref{eq:AS_Newton_bound},
and boundedness of $\cF^o\cap B(\bz^*,\rho^*)$.
Hence, we have established \eqref{eq:conv_thm_in_Q}. 
%
Thirdly and finally, consider the case that $i\in \Qc$.
Since $\cA(\bx^*)\subseteq Q$, $\bslambda_{\Qc}^*=\bzero$ and 
it follows from \eqref{eq:update_lambda_notQ} that,
either $\lambda_i^+=\min\{\lambda^{\max}, {\mu_{(Q)}^+}/{s_i^+}\}$, or $\lambda_i^+ = \min\{\underline{\lambda},\|\Dbxa\|^\nu+\|[\tbla_Q]_-\|^\nu\}$.
In the latter case, the bound in \eqref{eq:conv_thm_in_Q_2} follows.
In the former case, we have
\begin{equation*}
|\lambda_i^+ - \lambda_i^*| = |\lambda_i^+| 
\leq {\mu_{(Q)}^+}/{s_i^+} \:.
\end{equation*}
By definition, $s_i^+:=s_i+\alpha_\prim \Delta s_i$ is a convex combination of $s_i$ and $\tilde{s}_i^+$. 
Thus, Lemma~\ref{lemma:rho_star}(iii) gives that $s_i^+\geq\min\{s_i,\tilde{s}_i^+\}>\epsilon^*/2$. 
Then using the definition of $\mu_{(Q)}^+$ (see Step 10 of 
Algorithm~\ref{algo:CR-MPC}) 
leads to
\begin{equation*}
|\lambda_i^+ - \lambda_i^*| \leq 
\begin{cases}
\frac{2}{\epsilon^*|Q|} \left( 
 (\bs_{\cA(\bx^*)}^+)^T  (\bslambda_{\cA(\bx^*)}^+)
+ (\bs_{Q\setminus\cA(\bx^*)}^+)^T (\bslambda_{Q\setminus\cA(\bx^*)}^+)\right)
, & \text{if } |Q|\neq0\\
0\:, & \text{otherwise}
\end{cases}
\:.
\end{equation*}
Since $\bz\in B(\bz^*,\rho^*)$, $\bslambda_{\cA(\bx^*)}^+$ and $\bs_{Q\setminus\cA(\bx^*)}^+$ are bounded by Lemma~\ref{lemma:rho_star}(ii). 
Also, by definition, $\bs_{\cA(\bx^*)}^*=\bzero$.
Thus
there exist $d_5>0$ and $d_6>0$ such that
\begin{equation*}
|\lambda_i^+ - \lambda_i^*| 
\leq  d_5\|\bs_{\cA(\bx^*)}^+-\bs_{\cA(\bx^*)}^*\|
+ d_6 \|\bslambda_{Q\setminus\cA(\bx^*)}^+\|\:.
\end{equation*}
Having already established that
the second term is bounded by the right-hand side
of~\eqref{eq:conv_thm_in_Q}, and we are left to prove that 
the first term 
also is.
By definition,
\begin{equation*}
\|\bs_{\cA(\bx^*)}^+ - \bs_{\cA(\bx^*)}^*\| 
= \|A_{\cA(\bx^*)}\bx^+ - A_{\cA(\bx^*)}\bx^*\| 
\leq \|A\bx^+ - A\bx^*\|
\leq \|A\| \|\thickbreve{\bz}_Q^+ - \bz_Q^*\|\:.
\end{equation*}
Applying Proposition~\ref{prop:MPC_and_Newton_bound} and 
Lemma~\ref{coro:Newton_conv_2},
we get
\begin{equation*}
\begin{alignedat}{2}
\|\bs_{\cA(\bx^*)}^+ - \bs_{\cA(\bx^*)}^*\| 
&\leq \|A\| \|\thickbreve{\bz}_Q^+ - (\bz_Q+(\Delta^{\rm N}\bz)_Q)\| + \|A\| \|(\bz_Q+(\Delta^{\rm N}\bz)_Q) - \bz_Q^*\|\\
&\leq d_7 \max\{\|\Delta^{\rm N}\bz\|^2,\|\bz-\bz^*\|^2\} + d_8 \|\bz - \bz^*\|^2\:,
\end{alignedat}
\end{equation*}
for some $d_7>0$, $d_8>0$.
Hence, we established \eqref{eq:MPC_conv_assum} for all $i\in\bm$, 
thus proving the q-quadratic convergence rate. \qedsym

\bibliographystyle{spmpsci_NoUrlDoi}
\bibliography{PFPN_ref}

\end{document}